\setlist[itemize]{topsep=0pt,}
\newcommand{\cmark}{ {\color{teal} \ding{51}\;}}%
\newcommand{\xmark}{ {\color{red}{\ding{55}}\;}}%
\newcommand{\pmr}{pacing multiplier\,}
\newcommand{\pmrs}{pacing multipliers\,}
\def\given{\,|\,}
\def\tr{\mathop{\text{tr}}\kern.2ex}
\def \cond {{\,|\,}}
\def \gam {{ \gamma }}
\def \indi {{ \mathds{1} }}
\def\R{{\mathbb R}}
\def\Rn{{\mathbb R^n}}
\def\Rnp{{\mathbb R^n_+}}
\def\Rnpp{{\mathbb R^n_{++}}}
\def\P{{\mathbb P}}
\def\E{{\mathbb E}}
\def\B{{\mathbb B}}
\def\cC{{\mathcal{C}}}
\def\cH{{\mathscr{H}}}
\def\cJ{{\mathscr{J}}}
\def\cS{{\mathcal{S}}}
\def\cL{{\mathcal{L}}}
\def\cN{{\mathcal{N}}}
\newtheorem{Assumption}{Assumption}
\newtheorem{example}{Example}
\newtheorem{remark}{Remark}
\newtheorem{claim}{Claim}
\newtheorem{lemma}{Lemma}
\newtheorem{defn}{Definition}
\newlist{enumconditions}{enumerate}{1} 
\setlist[enumconditions]{label = \thelemma.\alph*}
\crefname{enumconditionsi}{Cond.}{Conditions}
\newlist{enumlmresult}{enumerate}{1} 
\setlist[enumlmresult]{label = \thelemma.\arabic*}
\crefname{enumlmresulti}{Part}{Parts}
\newlist{enumdef}{enumerate}{1} 
\setlist[enumdef]{label = \thedefn.\arabic*}
\crefname{enumdefi}{Cond.}{Cond.}
\crefname{Assumption}{Assumption}{Assumptions}
\renewcommand*{\theAssumption}{\arabic{Assumption}}
\crefname{defn}{Def}{Defs}
\crefname{example}{Ex}{Examples}
\newlist{enumthmresult}{enumerate}{1} 
\setlist[enumthmresult]{label = \thetheorem.\arabic*}
\crefname{enumthmresulti}{Thm}{Theorems}
\crefname{equation}{Eq}{Eqs}
\newlist{enumassumption}{enumerate}{1} 
\setlist[enumassumption]{label = \theAssumption.\arabic*}
\crefname{enumassumptioni}{Condition}{Conditions}
\crefname{appendix}{App}{Appendices}
\crefname{theorem}{Thm}{Theorems}
\crefname{section}{Sec}{Sections}
\def \hat {\widehat}
\newtheorem{theorem}{Theorem}
\renewcommand*{\thetheorem}{\arabic{theorem}}
\def \cd {{ \cdot }}
\def \deltasti {{\delta^*_i}}
\def \deltast {\delta^*}
\def \dwt { \delta_{t}}
\def \dtt { \epsilon_{t}}
\def \tdiff {{\hat \nabla\sq_{k\ell,\epsilon}}}
\def \tdifft {{\hat \nabla\sq_{k\ell,\eta_t}}}
\def \Ic {{I_<}}
\def \epi {{\mathrm{epi}\,}}
\def \REVst {{\small \textsc{{REV}}  }^* }
\def \FPPE {{  \mathsf{{FPPE}}}}
\def \oFPPE {{ \widehat{ \mathsf{{FPPE}}}}}
\def \LFM {{  \mathsf{{LFM}}}}
\def \oLFM {{  \widehat{ \mathsf{{LFM}} } }}
\def \CTR {{ \small \textsc{{CTR}}}}
\def \click {{ \small \textsc{{click}}}}
\def \AE {{ \small \textsc{{Adexchange}}}}
\def \RG{{ \small \textsc{{Region}}}}
\def \TG{{ \small \textsc{{Usertag}}}}
\def \pay {{ \small  \textsf{{pay}}}}
\def \boot {b}
\def \b { \beta }
\def \ust {{ u^* }}
\def \ugami { u^\gamma_i }
\def \usti {{ u^*_i }}
\def \nmt {\|_2}
\def \vi {{v_i}}
\def \vithetau {{ v_i(\theta^\tau) }}
\def \vithe {{ v_i(\theta) }}
\def \vitau {{ v_i^\tau }}
\def \vbar {{ \bar v }}
\def \vbart {{ \bar v^t }}
\def \vbartboot {{ \bar v^{t, \boot} }}
\def \var {{ \operatorname {Var} }}
\def \cov {{ \operatorname {Cov} }}
\def \sumtau {{\sum_{\tau=1}^{t}}}
\def \sumiton {{ \sum_{i=1}^n }}
\def \sumi {{ \sum_{i} }}
\def \ptau {{ p ^ \tau }}
\def \xtaui {{ x^\tau_i }}
\def \t {\theta}
\def \thetau {{ \theta^\tau }}
\def \eps{{ \epsilon }}
\def \ept { \epsilon_t }
\def \pst {p^*}
\def \betab { \beta ^ \boot}
\def \betast {\beta^*}
\def \betasti {{\beta^*_i}}
\def \betai {{\beta_i}}
\def \betak {{\beta_k}}
\def \betagam {\beta^\gamma}
\def \betagami {\beta^\gamma_i}
\def \xitau {{x_i^\tau}}
\def \xst {x^*}
\def \xsti {{x^*_i}}
\def \inv {^{{-1}}}
\def \Ihatp {{ \hat I _ = }}
\def \pinv {^{\dagger}}
\def \sq {^{2}}
\def \tp {^{\scriptscriptstyle\mathsf{T}}}
\def \st {^{*}}
\def \ot {^{\otimes 2}}
\def \Diag {{ \operatorname*{Diag}}}
\newcommand{\defeq}{ = }
\newcommand*\diff{\mathop{}\!\mathrm{d}}
\def \toprob {{ \,\overset{{p}}{\to}\, }}
\def \todepi {{ \,\stackrel{\mathsf{epi}}{\rightsquigarrow}\,}}
\def \toepi {{ \,\overset{\mathsf{epi}}{\longrightarrow}\, }}
\def \tod {{ \,\overset{{d}}{\to} \,}}
\def \topw {\,{\stackrel{p}{\rightsquigarrow}}\,}
\def \invtroot {t^{-1/2}}
\DeclareMathOperator*{\argmax}{arg\,max}
\DeclareMathOperator*{\argmin}{arg\,min}
\def \tcH {\tilde{\cH}}
\def \tG {\tilde{G}}
\def \Pbt {{P^\boot_t}}
\def \Pexbt {{P^{\mathsf{ex}, \boot}_t}}
\def \Pt {P_t}
\icmltitlerunning{Bootstrapping Fisher Market Equilibrium and First-Price Pacing Equilibrium}
\begin{document}

\twocolumn[
\icmltitle{Bootstrapping Fisher Market Equilibrium and First-Price Pacing Equilibrium}



\icmlsetsymbol{equal}{*}

\begin{icmlauthorlist}
\icmlauthor{Luofeng Liao}{sch}
\icmlauthor{Christian Kroer}{sch}
\end{icmlauthorlist}

\icmlaffiliation{sch}{IEOR, Columbia University}

\icmlcorrespondingauthor{Luofeng Liao}{ll3530@columbia.edu}

\icmlkeywords{First-price auction, pacing equilibria}

\vskip 0.3in
]


\printAffiliationsAndNotice{}  

\begin{abstract}
    The linear Fisher market (LFM) is a basic equilibrium model from economics, which also has application in fair and efficient resource allocation. First-price pacing equilibrium (FPPE) is a model capturing budget-management mechanisms in  first-price auctions. 
    In certain practical settings such as advertising auctions, there is an interest in performing statistical inference over these models. A popular methodology for general statistical inference is the bootstrap procedure.
    Yet, for LFM and FPPE there is no existing theory for the valid application of bootstrap procedures.
    In this paper, we introduce and devise several statistically valid bootstrap inference procedures for LFM and FPPE.
    The most challenging part is to bootstrap general FPPE, which reduces to bootstrapping constrained M-estimators, a largely unexplored problem. 
    We are able to devise a bootstrap procedure for FPPE under mild degeneracy conditions by using the powerful tool of epi-convergence theory.
    Experiments with synthetic and semi-real data verify our theory.
\end{abstract}
\section{Introduction}

The bootstrap \citep{efron1994introduction,horowitz2001bootstrap} is 
an automatic method for producing confidence intervals in statistical estimation. 
The theory of bootstrap has been extended to many areas of statistics, such as 
models with
cube-root asymptotics \citep{cattaneo2020bootstrap,patra2018consistent},
semi-parametric models 
\citep{cheng2010bootstrap,ma2005robust}
and so on.
However, as far as we are concerned, there is no theory of bootstrap for competitive equilibrium settings. 

In this paper, we study bootstrap inference in linear Fisher market (LFM) and first-price pacing equilibrium (FPPE).
Fisher market equilibrium model has been used in the tech industry, such as
the allocation of impressions to content in certain recommender systems~\citep{robust_blog},
robust 
and fair work allocation in content review~\citep{allouah2022robust};
we refer readers to~\citet{kroer2022market} for a comprehensive review.
Outside the tech industry, Fisher market equilibria also have applications to
scheduling problems~\citep{im2017competitive},
fair course seat allocation~\citep{othman2010finding,budish2016course},
allocating donations to food banks~\citep{aleksandrov2015online},
sharing scarce compute resources~\citep{ghodsi2011dominant,parkes2015beyond,kash2014no,devanur2018new}, 
and allocating blood donations to blood banks~\citep{mcelfresh2020matching}.

FPPE is a model for budget management in online advertising platforms.
In these platforms, advertisers report advertising parameters, such as target audience, conversion locations, and budgets, and then the platform creates a proxy bidder to bid in individual auctions to maximize advertiser utilities and manage budgets. A common way to manage budgets is pacing, 
in which the platform modifies the advertiser's bids by applying a shading factor, referred to as \emph{multiplicative pacing}.
In the case where each auction is a first-price auction, FPPE captures the outcomes of pacing-based budget-management systems.
\citet{conitzer2022pacing} introduced the FPPE notion and showed that FPPE always exists and is unique. Moreover, FPPE enjoys lots of nice properties such as being revenue-maximizing among all budget-feasible pacing strategies, 
shill-proof (the platform does not benefit from adding fake bids under first-price auction mechanism) and
revenue-monotone (revenue weakly increases when adding bidders, items or budget).

Given the wide range of applications of LFM and FPPE, an inferential theory for LFM and FPPE is useful.
Bootstrap, thanks to its convenience and  conceptual simplicity, is a natural candidate as an inferential tool.
However, due to the presence of an equilibrium structure in the dataset, 
the validity of bootstrap requires careful theoretical treatments, 
and practitioners should be cautious about the use of bootstrap when data arise from market equilibrium. 
For example, in \cref{sec:fail_multi_bs} we show that in the setting of first-price auction platforms, 
the traditional multinomial bootstrap may fail to consistently estimate the distribution of interest. 
Given the simplicity of resampling, it is fair to say bootstrap has been used in auction platforms as an inferential tool.
It is thus urgent to develop a statistically valid bootstrap theory that accounts for the equilibrium effect in the data.

Our contributions are threefold.

\textbf{We characterize the full landscape of the asymptotics of FPPE.}
The limit distribution of FPPE was studied in \citet{liao2023stat} under a strict complementarity condition. Combining their results with a result of \citet{shapiro1989asymptotic}, we complete the characterization of the asymptotics of FPPE without strict complementarity, and show that it is captured by a quadratic program.
We derive a new closed-form expression for this quadratic program, and use it to derive structural insights on the limit distribution in some special cases.
Characterizing the general case of FPPE asymptotics is necessary in order to derive our bootstrap results, because we need to show that our bootstrapped distribution converges to the asymptotic distribution of FPPE.

\textbf{We develop bootstrap theory for LFM and FPPE.}
A crucial fact for LFM and FPPE is that they both have an Eisenberg-Gale (EG) convex program characterization, and our bootstrap procedures rely on this program or its quadratic approximation.
For LFM we study three types of bootstrap procedures: exchangeable bootstrap \citep{wellner1996bootstrapping}, numerical bootstrap \citep{li2020numerical} and proximal bootstrap \citep{li2020numerical}. 
For FPPE the theory is a bit involved.
We identify a bootstrap failure when some type of degenerate buyers are present in the market. 
Then different bootstrap procedures are proposed under certain assumptions on the market structures: full expenditure of budgets ($I_{=>}= \emptyset$), absence of degenerate buyers ($I_{==} =\emptyset$), or fully general FPPE.
We summarize the results in \cref{tbl1,tbl2}.

\textbf{Numerical experiments demonstrate the validity of the theory.} We provide simulations and a semi-synthetic experiment based on a real-time bidding dataset from iPinYou \citep{liao2014ipinyou}.

\begin{table}[ht]
    \centering
    \begin{tabular}{p{2.5cm}|p{2.1cm}|p{2.1cm}}
         Exchangable BS & Numerical BS   & Proximal BS                   \\ \hline
      \cmark \cref{thm:exboot_fm} & \cmark \cref{thm:nuboot_fm} & \cmark \cref{thm:prboot_fm}                            \\
    \end{tabular}
    \caption{Results for linear Fisher market.}
    \label{tbl1}
\end{table}
\begin{table}[ht]
    \begin{tabular}{p{1.4cm}|p{1.4cm}|p{1.4cm}|p{1.9cm}}
        & Num. BS 
        & Prox. BS 
        & new methods 
    \\ \cline{1-4}
        $I_{=>} = \nobreak \emptyset$  (\cref{sec:poorbuyerfppe}) 
        & \cmark \cref{thm:nuboot_fppepoor} 
        & \cmark \cref{thm:prboot_fppepoor}
        & \multirow{2}{*}{ } 
    \\ \cline{1-4}
        $I_{==} = \emptyset$ (\cref{sec:bs_with_scs})  
        & \xmark NA 
        & \xmark NA 
        & \cmark \cref{thm:adaptive_boot}  
    \\ \cline{1-4}
        general (\cref{sec:generalfppe})  
        & \xmark NA 
        & \xmark NA 
        & \cmark \cref{thm:generalFPPE_coverage}  
    \end{tabular}
    \caption{Results for first-price pacing equilibrium.
    NA means  not applicable.
    $I_{=>}= \emptyset$ means full expenditure of budgets. 
    $I_{==} =\emptyset$ means absence of degenerate buyers.}
    \label{tbl2}
\end{table}

\textbf{Notations.} 
The notation $\cN(a,\Sigma)$ stands for a multivariate Gaussian distribution with mean $a$ and covariance $\Sigma$.

We use $W = (W_1,\dots, W_t)$ to denote bootstrap weights in the paper. 
Different distributions imposed on $W$ correspond to different bootstrap resampling schemes.
In the standard multinomial bootstrap $W = (W_1,\dots, W_t)$ follows a multinomial with probabilities $(\frac1t, \dots, \frac1t)$.
In exchangeable bootstrap $W$ is exchangeable:
if for any permutation $\pi=\left(\pi_1, \ldots, \pi_t\right)$ of $(1,2, \ldots, t)$, the joint distribution of $\pi\left(W\right)=$ $\left(W_{ \pi_1}, \ldots, W_{ \pi_t}\right)$ is the same as that of $W$.
Given items $(\thetau)_\tau$, we let $P_t$ be the expectation operator $P_t f =\frac1t \sumtau  f(\thetau) $.
Given multinomial bootstrap weights $W$ and $(\thetau)_\tau$, define the operator
\begin{align}
    P^\boot_t f = \frac1t \sumtau W_\tau f(\thetau).
\end{align}
We write $\Pexbt f =  \frac1t \sumtau W_\tau f(\thetau)$ for exchangeable bootstrap weights.

\paragraph{Bootstrap Consistency}

Most of our results will be concerned with the consistency of bootstrap procedures.
To that end, we introduce the following definition of consistency.
Given $t$ data points, a bootstrap estimate $X_t$ is a function of the data $(\thetau)_{\tau=1}^t $ and bootstrap weights $W$, where the data and weights are assumed to be independent of each other.
We say the conditional distribution of $(X_t)_t$ consistently estimates the distribution $L$, denoted $X_t \topw L$, if 
\begin{align*}
    \sup_{f\in \mathrm{BL}_1}
    \big|\E[f(X_t)\cond \{\thetau\}_{1}^t] - \E_{X\sim L}[f(X)] \big | \toprob 0 \; , 
\end{align*}
where $\mathrm{BL}_1$ is the space of functions $f: \Rn \to \R$ with $\sup_{x} |f(x)| \leq 1$ and $|f(x) - f(y) | \leq \|x-y\nmt$.

We survey related work in \cref{sec:relatedwork}.

\section{Review of Fisher Market and FPPE}
Both LFM and FPPE have a set of buyers and a set of items being priced. Here we introduce some components that both models share.
We have $n$ buyers and a possibly continuous set of items $\Theta$ with {an} integrating measure $\diff \theta$.
For example, $\Theta = [0,1]$ with $\diff \theta$ being the Lebesgue measure on $[0,1]$.
Both LFM and FPPE require the following elements.
\begin{itemize}
    \item The \emph{budget} $b_i$ of buyer $i$. Let $b = (b_1,\dots, b_n)$. 
   
    \item The \emph{valuation} for buyer $i$ is a function $v_i \in L^1_+$, i.e., buyer $i$ has valuation $v_i(\theta)$ (value per unit supply) of item $\theta\in \Theta$. Let $v: \Theta \to \Rn$, $v(\theta) = [v_1(\theta),\dots, v_n(\theta)]$. We assume $\vbar = \max _i \sup_\theta  v_i(\theta)< \infty $.

    \item 
    The \emph{supplies} of items are given by a function  $ s \in L^\infty_+$, i.e., item $\theta\in \Theta$ has $s(\theta)$ units of supply. 
    Without loss of generality, we assume a unit total supply $\int_\Theta s \diff \theta = 1$. 
    Given $g:\Theta \to \R$, we let $\E[g] = \int g(\theta)s(\theta)\diff \theta$ and $\var[g] = \E[g\sq] - (\E[g])\sq$.
    Given  $t$ i.i.d.\ draws $\{ \theta^1,\dots, \theta^t\}$ 
    from $s$, let $P_t g(\cd) = \frac1t \sumtau g(\thetau)$.
\end{itemize}

Equilibria in both LFM and FPPE are characterized by an EG convex program. In both cases, the dual EG objective separates into per-item convex terms
\begin{align}
    \label{eq:def:F}
    F (\t, \b) = \max_{i\in[n]} \beta_i v_i(\theta) -  \sumiton b_i \log \beta_i \;.
\end{align}
and the population and sample EG objectives are
\begin{align}
    \label{eq:def_pop_eg}
    H(\beta) = \E[F(\t,\b)] \; , \;\; H_t(\beta) = P_t F(\cd, \beta) \; .
\end{align}
We comment on the differential structure of $f(\t,\b)=\max_i \betai\vithe$ since it plays a role in later sections. 
Function $f(\b, \t)$ is a convex function of $\b$ and its subdifferential $\partial_\b f(\b,\t)$ is the convex hull of $\{ v_i e_i: \text{index $i$ such that $\betai\vithe = \max_k \betak v_k(\t)$}  \}$, with $e_i$ being the base vector in $\Rn$.
When $\max_i \betai v_i(\t)$ is attained by a unique $i^*$, the function $f$ is differentiable. In that case, the $i$-th entry of $\nabla_\b f(\t,\b)$ is $v_i(\t)$ for $i=i^*$ and zero otherwise. 

\subsection{Linear Fisher Markets (LFM)}

In the LFM model, the goal is to divide items $\Theta$ in a fair and efficient way. It is well known that 
the competitive equilibrium from equal incomes (CEEI)
mechanism produces an allocation that is Pareto efficient, envy-free and proportional \citep{nisan2007algorithmic}.
LFM is also a useful tool for modeling competition in an economy.

We now describe the competitive equilibrium concept. 
Imagine there is a central policymaker that sets 
prices $p(\cdot)$ for the items $\Theta$. Upon observing the prices, the buyer~$i$ 
maximizes their utility subject to the budget constraint. 
Their \emph{demand set} is the set of bundles that are optimal under the prices:
\[ D_i (p) := \argmax_{x_i \in L^\infty_+(\Theta)} \bigg\{ \int v_i x_i s \diff \t :  \int p x_i s\diff \t \leq b_i\bigg\} \; .\] 
Note that the demand set allows $x_i$ to take values greater than one.

Of course, due to the supply constraint, if prices are too low, there will be a supply shortage. On the other hand, if prices are too high, a surplus occurs. A competitive equilibrium is a set of prices and bundles such that all items are sold exactly at their supply (or have price zero).
We call such an equilibrium the \emph{limit LFM equilibrium} for the supply function $s$~\citep{gao2022infinite,liao2023fisher}.
\begin{defn}[Limit Linear Fisher Market Equilibrium]
    \label{def:LMF}
The limit equilibrium, denoted $\LFM (b,v,s, \Theta)$, is an allocation-price tuple $(x,p(\cd))$ such that the following holds.
\begin{enumerate}
    \item Supply feasibility and market clearance: $\sum_i x_i \leq 1$ and $\int p (1 - \sum_i x_i) s \diff \t= 0$. 
    \item Buyer optimality: $x_i \in D_i (p)$ all $i$.
\end{enumerate}
\end{defn}

Given the equilibrium quantities $(\xst, \pst)$, let $\usti = \int \vi s \xsti \diff \t$ be the buyer $i$ utility, and $\betasti = b_i / \usti$ be the buyer $i$ inverse bang-per-buck.
In an LFM, the equilibrium quantities $\pst,\betast,\ust$ are unique. 
Under twice differentiability (\nameref{as:twice_diff}; to be defined), the allocation $\xst$ is also unique.

Next we introduce the \emph{finite} LFM, which models the data we observe in a market.
Let $\gam = \{ \theta^1,\dots, \theta^t\}$ be $t$ i.i.d. samples from the supply distribution $s$, each with supply $1/t$.
See \cref{sec:defs_lfm_fppe} for a full definition. For a finite LFM, let the equilibrium per-buyer inverse bang-per-bucks be denoted by $\betagam$.

\begin{defn}[Finite LFM, informal] \label{def:observed_market}
    The finite LFM equilibrium, denoted $\oLFM$, is a limit LFM equilibrium where the item set $\Theta$ is the finite set of observed items $\gamma$. 
\end{defn}

It is well-known \citep{eisenberg1959consensus,gao2022infinite}
that the equilibrium inverse bang-per-buck $\betast$ in an limit (resp.\ finite) LFM uniquely solves
the population (resp.\ sample) dual EG program
\begin{align}
    \label{eq:pop_deg_lfm}
    \betast = \argmin_{\beta \in \Rnp}
     H(\beta) \; , \;
     \betagam = \argmin_{\b\in \Rnp} H_t(\b) \; .
\end{align}

The asymptotics of LFM were studied in \citet{liao2023fisher} under twice differentiability (\nameref{as:twice_diff}; to be defined). 
Let $\cH = \nabla \sq H (\betast)$.
They show $\sqrt t (\betagam - \betast)  \tod \cJ_\LFM$, where
\begin{align}
    \label{eq:def_cJLMF}
    \cJ_\LFM  = \cN \big(0, \cH\inv \E[\nabla F(\cd,\betast) \nabla F(\cd,\betast) \tp ] \cH \inv\big)  \;.
\end{align}

\subsection{First-Price Pacing Equilibrium (FPPE)}
The FPPE setting~\citep{conitzer2022pacing} models an economy that typically occurs on internet advertising platforms: the buyers (advertisers in the internet advertising setting) are subject to budget constraints, and must participate in a set of first-price auctions, each of which sells a single item.
Each buyer chooses a \emph{pacing multiplier} $\beta_i \in [0,1]$ that scales down their bids in the auctions, and submits bids of the form $\beta_i v_i(\theta)$ for each item $\theta$, with the goal of choosing $\beta_i$ such that there is \emph{no unnecessary pacing}, i.e. they spend their budget exactly, or they spend less than their budget but they do not scale down their bids.
In the FPPE model, all auctions occur simultaneously, and thus the buyers choose a single $\beta_i$ that determines their bid in all auctions.

\begin{defn}[Limit FPPE, \citet{gao2022infinite,conitzer2022pacing}]
    \label{def:limit_fppe}
    A limit FPPE, denoted $\FPPE(b,v,s, \Theta)$, is the unique tuple $(\beta, p(\cd)) \in [0,1]^n \times L^1_+ (\Theta)$ such that there exist $x_i : \Theta \to [0,1]$, $i\in[n]$ satisfying
    \begin{enumerate}[series = tobecont,itemjoin = \quad]
        \item (First-price) 
        Prices are determined by first-price auctions:
        for all items $\theta \in \Theta$, $p(\theta) = \max_i \betai v_i(\theta)$. 
        Only the highest bidders win:
        for all $i$ and~$\theta$, $x_i(\theta) > 0$ implies $\betai \vithe =\max_k \beta_k v_k (\theta)$ 
        \label{it:def:first_price}
        \item (Feasibility, market clearing)  
        Let $\pay_i = \int x_i(\theta) p(\theta) s(\theta)\diff \theta $.
        Buyers satisfy budgets:
        for all~$i$, $\pay_i  \leq b_i$. 
        There is no overselling: 
        for all $\theta$, $\sumiton x_i (\theta) \leq 1$.  
        \label{it:def:supply_and_budget}
        All items are fully allocated:
        for all~$\theta$, $p(\theta) > 0$ implies $ \sumiton x_i(\theta) = 1$.
        \item (No unnecessary pacing) For all $i$, $\pay_i < b_i$ implies $\betai = 1$. 
        \label{it:def:rev_max}
    \end{enumerate}
\end{defn}

FPPE is a hindsight and static solution concept for internet ad auctions. 
Suppose we know all the items that are going to show up on a platform.
FPPE describes how we could configure the $\betai$'s in a way that ensures that all buyers satisfy their budgets, while maintaining their expressed valuation ratios between items. 
Typically, the $\betai$'s are chosen by a pacing algorithm that is run by the platform.
FPPE has many nice properties, such as the fact that it is a competitive equilibrium, it is revenue-maximizing, revenue-monotone, shill-proof, has a unique set of prices, and so on~\citep{conitzer2022pacing}.
We refer readers to \citet{conitzer2022pacing,liao2023stat} for more context about the use of FPPE in internet ad auctions.

Let $\betast$ and $\pst$ be the unique FPPE equilibrium multipliers and prices. 
Revenue in the limit FPPE is 
$
        \REVst \defeq \int \pst(\theta) s(\theta)\diff \theta \; .
$
We let the leftover budget be denoted by  $\deltasti \defeq b_i - \pay_i$.
We say a buyer $i$ is \textit{degenerate} if $\betasti = 1$ and $\deltasti = 0$.

In FPPE the following regularity condition is important.
\begin{Assumption}[\textsf{\scriptsize{SCS}}] 
    There are no degenerate buyers, i.e., 
    \label{as:constraint_qualification}  $\betasti = 1$ implies  $\delta^*_i  > 0$. 
\end{Assumption}

This assumption is a strict complementary slackness condition since $\deltasti$ is the dual variable of $\betasti$ in the EG program introduced below.  
We will study the asymptotics of FPPE without \nameref{as:constraint_qualification}.
However, as we will see in \cref{sec:bs_with_scs}, condition \nameref{as:constraint_qualification} is helpful for bootstrap inference.

We let $\gam = \{ \theta^1,\dots, \theta^t\}$ be $t$ i.i.d.\ draws from $s$, each with supply $1/t $. They represent the items observed in an auction market.
The definition of a finite FPPE is parallel to that of a limit FPPE, except that we change the supply function to be a discrete distribution supported on the finite set $\gamma$.

\begin{defn}[Finite FPPE, informal]    
    A finite FPPE, $\oFPPE$, is a limit FPPE where the item set is the finite set of observed items $\gamma$. See \cref{sec:defs_lfm_fppe} for the full definition.
\end{defn}

It is well-known \citep{cole2017convex,conitzer2022pacing,gao2022infinite}
that $\beta$ in a limit (resp.\ finite) FPPE uniquely solves 
the population (resp.\ sample) dual EG program
\begin{align}
    \label{eq:pop_deg}
    \betast =  \argmin_{\beta \in (0, 1]^{n}}
     H(\beta) \;, \;
     \betagam = \argmin_{\b\in(0,1]^n} H_t(\b)  \;,
\end{align}
where the objectives $H$ and $H_t$ is the same as \cref{eq:pop_deg_lfm}. The difference between the LFM and FPPE convex programs is that for FPPE we impose the constraint $ (0,1]^n$.

The study of the asymptotics of FPPE was initiated by \citet{liao2023stat}. Let $\cJ_\FPPE$ be the limit distribution of $\sqrt t (\betagam - \betast) $, i.e., 
\begin{align}
    \sqrt t (\betagam - \betast) \tod \cJ_\FPPE \;.
    \label{eq:def_jfppe}
\end{align}
They show that, with the strict complementary slackness assumption 
\nameref{as:constraint_qualification}, the distribution $\cJ_\FPPE$ simplifies to
\begin{align}
    \cN \big(0, \,(P\cH P)\pinv \cov[\nabla F(\cd,\betast) ] (P \cH P) \pinv \big) \;,
    \label{eq:fppe_with_scs}
\end{align}
where $\cH = \nabla \sq H (\betast)$ and $P = \Diag(1(\betasti < 1))$.
Note this is a degenerate normal distribution supported on the hyperplane $\{h: h_i = 0, i \in I_{=} = I_{=>}\}$.

We will study the form of $\cJ_\FPPE$ assuming only twice differentiability (\nameref{as:twice_diff}) and not \nameref{as:constraint_qualification}. We will characterize $\cJ_\FPPE$ by a random quadratic program and provide several examples.
Thus, a contribution of our paper is to remove the strict complementarity slackness assumption and characterize the full landscape of FPPE asymptotics.

\subsection{Smoothness Assumptions}
The following assumption will be made throughout the paper, for both LFM and FPPE.
\begin{Assumption}[\textsf{\scriptsize{SMO}}]\label{as:twice_diff}
    The EG population objective $H(\cd)$ in \cref{eq:def_pop_eg} is twice continuously differentiable in a neighborhood of $\betast$. 
\end{Assumption}

\cref{as:twice_diff} implies that the Hessian $\cH = \nabla\sq H(\betast)$ is positive definite.
Here $\betast$ is interpreted as the equilibrium inverse bang-per-buck in a limit LFM, and  equilibrium pacing multipliers in a FPPE. See \citet{liao2023stat,liao2023fisher} for discussions of implications and concrete examples of \nameref{as:twice_diff} holding.

Our research goal can now be stated as
\begin{center}
    \noindent\fbox{%
    
        \parbox{.45\textwidth}{%
        \emph{Design bootstrap estimators of the distribution $\cJ_\LFM$ (resp.\ $\cJ_\FPPE$) given the observed market equilibrium $\oLFM  $ (resp.\ $\oFPPE $).}
        }%
    }
\end{center}

Inference on other quantities that are differentiable functions of $\betast$ can be achieved by the bootstrap delta method (\citet[Theorem 12.1]{kosorok2008introduction}, \citet[Theorem 3.10.11]{vaart2023empirical}).
For example, utilities $\usti = b_i / \betasti$ and the Nash social welfare $ \sumi b_i \log \usti = \sumi b_i \log (b_i / \betasti)$ are  smooth functions of $\betast$.
Revenue $ \int \max_i \{ \betasti \vithe\} s(\theta) \diff \t$ is also a smooth function of $\betast$.
For this reason, throughout the paper we will focus on inference of $\betast$, i.e., the utility prices in LFM and pacing multipliers in FPPE. 

\section{Bootstrapping Fisher Market Equilibrium}

In this section we let $\betagam$ be the observed utility prices in $\oLFM(b,v,1/t, \gamma)$, where $\gamma$ consists of $t$ i.i.d.\ draws from supply $s$. As mentioned previously, $\betagam = \argmin_{\Rnp} H_t(\b)$. The target distribution we want to estimate is $\cJ_\LFM$ in \cref{eq:def_cJLMF}.

\subsection{Exchangeable Bootstrap}
Define the exchangeable bootstrap by 
\begin{align}
    \betab_{\mathsf{ex, LFM}} = \argmin_{\beta \in \Rnp} \Pexbt F(\cd, \beta) \; .
    \label{eq:defexboot_lfm}
\end{align}
Compared with the convex program for LFM in \cref{eq:pop_deg_lfm}, the exchangeable bootstrap replaces $P_t$ with $\Pexbt$.
Exchangeable bootstrap is considered a smooth alternative to the traditional 
multinomial bootstrap (i.e. sampling with replacement) because it allows for a wider class of distributions of bootstrap weights \citep{praestgaard1993exchangeably}.
Concretely, we need the weights in the exchangeable bootstrap to satisfy the following conditions.
\begin{defn}[Exchangable bootstrap weights]
    \label{as:exboot}
    (1) The random vector $W=(W_{1}, \ldots, W_{t})\tp$ is exchangeable. 
    (2) $W_{ \tau} \geq 0$, and $\sum_{\tau=1}^t W_{ \tau}=t$.
    (3) $W_1$ has finite $(2+\epsilon)$ moment for some $\epsilon > 0$.
    (4) $\frac1t \sum_{\tau=1}^t\left(W_{ \tau}-1\right)^2 \toprob c^2>0$ as $t\to\infty$.
\end{defn}

Exchangeable bootstrap incorporates many popular forms of resampling as special cases such as the classical sampling with replacement, sampling without replacement, and normalized i.i.d.\ weights; see \cref{sec:exchangeable_bs_example}.

\begin{theorem}
    \label{thm:exboot_fm}
   
    $ \sqrt t (\betab_{\mathsf{ex, LFM}} - \betagam ) \topw  c\cdot \cJ_\LFM$ where 
    the constant $c$ is defined in \cref{as:exboot}.
    Proof in \cref{sec:proof:thm:exboot_fm}.
\end{theorem}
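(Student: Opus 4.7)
The plan is to use the argmin-continuous-mapping approach that is standard for M-estimators with bootstrap: show that both $\betagam$ and $\betab_{\mathsf{ex, LFM}}$ admit the same asymptotic linear expansion around $\betast$, differing only in which empirical measure drives the noise term, and then subtract. The positivity constraint $\b\in\Rnp$ is inactive at $\betast$ (budgets are positive, so $\betasti = b_i/\usti > 0$), hence for $t$ large both argmins sit in the interior with high probability and I can treat the problems as locally unconstrained.

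First, I would define the localized objectives $M_t(u) = t\,[H_t(\betast + u/\sqrt t) - H_t(\betast)]$ and $M^\boot_t(u) = t\,[\Pexbt F(\cd,\betast + u/\sqrt t) - \Pexbt F(\cd,\betast)]$. Since $H$ is $C^2$ near $\betast$ by \nameref{as:twice_diff} and $\nabla H(\betast) = 0$, a Taylor expansion gives $t\,[H(\betast + u/\sqrt t) - H(\betast)] = \tfrac12 u\tp \cH u + o(1)$ uniformly on compact sets of $u$. The stochastic residual is controlled via a Lipschitz-in-$\b$ empirical-process argument: $F(\cd,\b)$ is uniformly Lipschitz in $\b$ on a neighborhood of $\betast$ (as a finite max of linear functions of $\b$ minus a smooth log term), so the associated recentered class has finite bracketing entropy under $P$ and the fluctuation of $\Pt$ and $\Pexbt$ around their tangent approximation is $o_P(1)$ after scaling. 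This yields
\begin{align*}
M_t(u) &= \tfrac12 u\tp \cH u + u\tp Z_t + o_P(1), \\
M^\boot_t(u) &= \tfrac12 u\tp \cH u + u\tp Z^\boot_t + o_P(1),
\end{align*}
where $Z_t = \sqrt t\,(\Pt - P)\nabla F(\cd,\betast)$ and $Z^\boot_t = \sqrt t\,(\Pexbt - P)\nabla F(\cd,\betast)$, uniformly on compacta. Convexity of $M_t$ and $M^\boot_t$ in $u$ allows me to pass from these quadratic expansions to convergence of their argmins via a standard convex argmin lemma, producing $\sqrt t(\betagam - \betast) = -\cH\inv Z_t + o_P(1)$ and its conditional analogue $\sqrt t(\betab_{\mathsf{ex, LFM}} - \betast) = -\cH\inv Z^\boot_t + o_P(1)$.

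Subtracting the two expansions gives $\sqrt t(\betab_{\mathsf{ex, LFM}} - \betagam) = -\cH\inv \sqrt t\,(\Pexbt - \Pt)\nabla F(\cd,\betast) + o_P(1)$. The Praestgaard--Wellner exchangeable-bootstrap CLT, combined with the moment and exchangeability hypotheses of \cref{as:exboot} which produce the scale constant $c$, implies $\sqrt t\,(\Pexbt - \Pt) g \topw c\,\cN(0,\var g)$ for each fixed $g \in L^2(P)$; applied coordinatewise to $\nabla F(\cd,\betast)$ and composed with the linear map $-\cH\inv$, this yields the target $c\,\cN(0,\cH\inv\cov[\nabla F(\cd,\betast)]\cH\inv) = c\cdot\cJ_\LFM$, matching the variance in \cref{eq:def_cJLMF}.

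The main obstacle will be making the stochastic-equicontinuity step fully rigorous in the bootstrap world, because $F(\t,\b)$ is only piecewise smooth in $\b$: kinks occur where $\argmax_i \betai\vithe$ changes. I would handle this by exploiting that $F$ is Lipschitz in $\b$ with a bounded envelope on any compact neighborhood of $\betast$, so the tangent-centered class is Donsker with finite bracketing entropy under $P$, and then invoking preservation results for the exchangeably-weighted bootstrap empirical process (as in Kosorok, Chapter~10, or van der Vaart and Wellner, Section~3.6) to transfer the Donsker/equicontinuity conclusions from $\Pt$ to $\Pexbt$ conditionally in probability, thereby closing both the expansion step and the final invocation of the conditional CLT.
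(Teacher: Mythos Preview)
Your proposal is correct and follows essentially the same strategy as the paper: both establish the asymptotic linear representation $\sqrt t(\betab_{\mathsf{ex,LFM}} - \betagam) = -\cH\inv \sqrt t(\Pexbt - \Pt)\nabla F(\cd,\betast) + o_p(1)$ via stochastic differentiability of the (nonsmooth) EG objective---your Lipschitz/Donsker argument is exactly the content of the paper's \cref{lm:stocdiff_boot}---and then apply the Praestgaard--Wellner exchangeable-bootstrap CLT to get $c\cdot\cJ_\LFM$. The only cosmetic difference is that the paper packages the argument as Pollard's three-step scheme (consistency, then $O_p(1/\sqrt t)$ rate, then exact expansion by comparing objective values at $\betab$ and at the linearized point $\betast+\Delta^b+\Delta^\gam$), whereas you obtain the same linear expansion directly via the convex argmin lemma.
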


The proof of \cref{thm:exboot_fm} is complicated by the fact that the EG objective is nonsmooth due to the $\max$ operation in \cref{eq:defexboot_lfm}.
Establishing \cref{thm:exboot_fm} requires using the exchangeable bootstrap empirical process theory from \citet{praestgaard1993exchangeably} and \citet{wellner1996bootstrapping} to establish a form of stochastic differentiability (\cref{lm:stocdiff_boot} in appendix),
and applying the Taylor expansion-type analysis for nonsmooth objective functions from \citet{pollard1985new}.

In practice, approximate LMF equilibrium and bootstrap estimates suffice.
\cref{eq:defexboot_lfm} need not be solved exactly; error in the objective up to order $o_p(1/n)$ suffices, i.e., $\Pexbt F(\cd, \betab_{\mathsf{ex, LFM}} ) \leq  \min_\b  \Pexbt F(\cd, \beta)+o_p(1/n)$. And $\betagam$ only needs to be an approximate Fisher market equilibrium: $P_t F(\cd, \betagam ) \leq \min_\b P_t F(\cd, \b) + o_p(1/n)$.
The proof of \cref{thm:exboot_fm} can be extended to account for the extra error from approximate optimization.
In \cref{sec:nub_proxb_lfm} we briefly review two other valid bootstrap procedures, proximal bootstrap, and numerical bootstrap, and the consistency theory based on \citet{li2020numerical} and \citet{li2023proximal}.
Proximal bootstrap has the advantage of solving quadratic programs only.
However, those two methods converge at a rate slower than $1/\sqrt t$. In contrast, exchangeable bootstrap offers flexibility in choosing bootstrap weights, enjoys a $1/\sqrt t$ rate, and does not need parameter tuning.

\section{Bootstrapping FPPE}

In this section we let $\betagam$ be the pacing multiplier in $\oFPPE(b,v,1/t, \gamma)$, where $\gamma$ consists of $t$ i.i.d.\ draws from supply $s$. As mentioned previously, $\betagam = \argmin_{(0,1]^n} H_t(\b)$. The target distribution we want to estimate is $\cJ_\FPPE$ in \cref{eq:def_jfppe}, the limit distribution of $\sqrt t(\betagam  - \betast)$.

Bootstrapping FPPE is a significantly harder problem due to the presence of constraints in the EG program in \cref{eq:pop_deg}.
We investigate the full landscape of FPPE asymptotics, i.e., $\cJ_\FPPE$, in \cref{sec:fppe_limit_dist}. 
In \cref{sec:fail_multi_bs}, we show that the standard multinomial bootstrap fails to estimate $\cJ_\FPPE$ consistently. 
This also suggests that estimating $\cJ_\FPPE$ in full generality is difficult.
Because of this, we divide our study into an easier case and the harder case.
In the simpler case, we assume that all buyers exhaust their budget; for this case we show in \cref{sec:poorbuyerfppe} that the bootstrap methods from \citet{li2020numerical,li2023proximal} are valid.
A more realistic case is when some buyers do have leftover budgets. We design a bootstrap for this case in \cref{sec:bs_with_scs}, under an additional assumption of strong complementary slackness (\nameref{as:constraint_qualification}).
Finally, to complete the picture, we present a bootstrap-based confidence region for fully general FPPE in \cref{sec:generalfppe}.

\subsection{The Limit Distribution of General FPPE}
\label{sec:fppe_limit_dist}
The limit distribution of FPPE was studied in \citet{liao2023stat} under Assumption \nameref{as:constraint_qualification}. In this section, we characterize the full landscape of the asymptotics of FPPE without strict complementarity.
The convex program characterization in this section is a direct corollary of
noticing the connection between the results of \citet{shapiro1989asymptotic} and \citet{liao2023stat}. 
Concretely, Theorem 3.3 from \citet{shapiro1989asymptotic} established asymptotic distribution results for general constrained programs under equicontinuity conditions, and the results of \citet{liao2023stat} imply those equicontinuity conditions for the EG objective in \cref{eq:def:F}.
This is how we derive the convex program characterization of the asymptotics below. We then derive a new closed-form expression for the convex program, which allows us to analyze the asymptotic structure for several example.


To describe $\cJ_\FPPE$ we need to introduce a quadratic program.
Let $I_= = \{ i: \betasti = 1\}$ be the set of unpaced buyers and $\Ic = [n] \setminus I$. We further partition $I_=$ into 
\begin{align*}
    I_{=>} = \{i: \betasti =1, \deltasti > 0\}\;,
    \; I_{==} = \{i : \betasti = 1, \deltasti= 0\}\; .
\end{align*}

$I_{=>}$ is the set of buyers with strictly positive leftover budgets, whereas $I_{==}$ are the degenerate buyers.
From an optimization perspective, the set $I_{=>}$ corresponds to the strongly active constraints in the program \cref{eq:pop_deg}, whose corresponding Lagrange multipliers are strictly positive, while the set $I_{==}$ are the weakly active constraints, whose Lagrange multipliers are zero. With these notations, we note \nameref{as:constraint_qualification} is the same as $I_{==} = \emptyset$, and that the condition that all buyers exhaust their budgets is the same as $I_{=>} = \emptyset$.
Define $h:\Rn\to \Rn$,
\begin{align}
    \label{eq:fppe_asym_program}
    h(\xi) =  \argmin_{h \in \Rn;  h_i = 0, i \in I_{=>};  h_i \leq 0, i \in I_{==}} \| h + \cH\inv \xi \|_{\cH} \sq \; ,
\end{align}
where $\|a\|_\cH \sq = a\tp \cH a$.
The program \cref{eq:fppe_asym_program} can be interpreted as projecting the vector $-\cH\inv \xi$ onto the cone $\{h: h_i=0, i\in I_{=>}; h_i \leq 0, j\in I_{==} \}$ w.r.t.\ the norm $\|\cd\|_\cH$.
The function $h$ is continuous and positively homogeneous of degree 1, i.e., $h(t \xi) = t h(\xi)$ for $t > 0$, but not necessarily linear. When $I_{==} = \emptyset$, i.e., \nameref{as:constraint_qualification} holds, the function $h(\xi) = - (P\cH P)\pinv \xi$.

Combining Theorem 3.3 from \citet{shapiro1989asymptotic} with the equicontinuity results of \citet{liao2023stat}, we have that under the \nameref{as:twice_diff} assumption, 
$$\cJ_\FPPE \defeq h\Big( \cN \big(0, \cov[\nabla F(\cd,\betast)] \big) \Big)\;.$$

Note how different buyer types affect the support of $\cJ_\FPPE$.
The effects of $I_{==}$ and $I_{=>}$ are clear since they appear in the constraints. Buyers who do not win anything ($ \betasti = 1, \deltasti = b_i $) determine the support of $\cN( 0, \cov[\nabla F(\cd,\betast)])$, which is $
\{ g\in \Rn : g_i = 0  \text{ if $i$ does not win anything}\}$.

Below and in \cref{sec:detailsofexample} we study the form of $\cJ_\FPPE$ under some special cases by deriving closed-form expression of the quadratic program \cref{eq:fppe_asym_program}.

In the example below, we assume $I_{=>} = \emptyset$ for simplicity. 
Let 
$
    D = \Diag(\cH \inv ) ^{1/2}, \rho =  D\inv \cH\inv D\inv, Z = - D\inv \cH\inv G.
$
where $G\sim \cN(0, \cov[\nabla F(\cd,\betast)])$. Intuitively, $\rho$ is a normalized version of the inverse of the Hessian. Denote entries of $Z$ by $[Z_1,\dots, Z_n]\tp$. 
\begin{example}[The case with $|I_{==}| = 1$.]
    \label{ex:i01}
    Let $I_{=>} = \emptyset$, $I_{==} = \{ 1\}$ and $\Ic = \{2, \dots, n\}$.
    Then $\cJ_\FPPE = - \cH\inv G$ if $Z_1 < 0$, otherwise, if $Z_1 \geq 0$, then
    \begin{equation}
       \cJ_\FPPE  \defeq 
        D \begin{bmatrix}
            0 \\
            Z_{ 2}-\rho_{12} Z_{ 1} \\
            \vdots \\
            Z_{ n}-\rho_{1 n} Z_{ 1} 
        \end{bmatrix} \;. 
    \end{equation}
\end{example}

\cref{ex:i01} and \cref{eg:case_two_buyer} in appendix
illustrate an interesting phenomenon that the limit marginal distribution of the degenerate buyers (those with $\betasti = 1$ and $\deltasti = 0$)
is a distribution with some probability weight at $0$ and the rest on the negative reals.
This makes sense intuitively since in a finite sample, $\betagami - \betasti = \betagami - 1$ is always negative for $i \in I_{==}$.
Another feature of $\cJ_\FPPE$ is that the limit distribution of $\sqrt{t}(\betagam_i - 1)$ is degenerate (a point mass at zero) if $i\in I_{=>}$. This also implies $\betagami - 1 = o_p(\frac{1}{\sqrt t})$ if $i \in I_{=>}$.

\subsection{Failure of Multinomial Bootstrap for FPPE}
\label{sec:fail_multi_bs}
As described in \citet{andrews2000inconsistency}, standard multinomial bootstrap might fail in constrained programs.
In this section, we show that this is the case for FPPE.

Consider a one-buyer FPPE. Let $b_1 = 1$, $\E[v_1] = \int v_1 s \diff \theta = 1$ and $s$ is the supply (a probability density). 
Let $\gamma = \{\thetau\}_\tau$
be i.i.d.\ draws from $s$. 
Let $\betagam$ be the pacing multiplier in $\oFPPE(b,v,1/t,\gamma)$ and $\betast$ be that in $\FPPE(b,v,s,\Theta)$.

Given the observed items, let $\{ \theta^{\tau,\boot}\}_\tau$ be the resampled items (with replacement).
For this instance,
the bootstrapped FPPE with standard multinomial weights is 
\begin{align}
    \label{eq:bsexample_eg}
    \beta^{\boot} \defeq \argmin_{\beta_1 \in (0,1] }  \frac1t \sumtau  \beta_1 v_1(\theta^{\tau,\boot}) - b_1 \log \beta_1 \;.
\end{align}

\begin{theorem}[Failure of Multinomial Bootstrap]
    \label{thm:failbs}
    The limit conditional distribution of $\sqrt t (\betab - \betagam)$ is not equal to the limit distribution of $\sqrt t (\betagam - \betast)$. 
    Proof in \cref{sec:proof:thm:failbs}.
\end{theorem}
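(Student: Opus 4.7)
The one-buyer case collapses the question to a scalar delta-method calculation around the non-smooth point of a simple piecewise map, where the standard multinomial bootstrap is well known to fail. First I would derive closed forms: with $n=1$, $b_1=1$, and $\E[v_1]=1$, solving the unconstrained KKT condition for $\beta \mapsto \beta (P_t v_1) - \log \beta$ and projecting onto $(0,1]$ yields
\[
    \betast = 1, \quad \betagam = \min\{1,\, 1/(P_t v_1)\}, \quad \betab = \min\{1,\, 1/(\Pboott v_1)\}.
\]
Everything then reduces to understanding the scalar map $\phi(x) = \min\{1, 1/x\}$ near $x=1$, which fails to be differentiable there but has directional derivative $\phi'(1; h) = -h^+$.

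Next, set $\zeta_t = \sqrt t(P_t v_1 - 1)$ and $\tilde\zeta_t = \sqrt t(\Pboott v_1 - P_t v_1)$. The CLT gives $\zeta_t \tod Z \sim \cN(0, \sigma^2)$ with $\sigma^2 = \var[v_1]$, while the bootstrap CLT gives $\tilde\zeta_t \topw \tilde Z \sim \cN(0, \sigma^2)$ conditional on the data. Because $\sqrt t(\phi(1 + h/\sqrt t) - \phi(1)) \to -h^+$ uniformly on compacts, a directional Taylor expansion evaluated at $\zeta_t$ and at $\zeta_t + \tilde\zeta_t$ yields
\[
    \sqrt t(\betagam - \betast) = -\zeta_t^+ + o_p(1) \tod -Z^+,
\]
which identifies the target $\cJ_\FPPE = \mathrm{law}(-Z^+)$, and
\[
    \sqrt t(\betab - \betagam) = \zeta_t^+ - (\zeta_t + \tilde\zeta_t)^+ + o_p(1).
\]
Hence, conditional on the data, $\sqrt t(\betab - \betagam)$ converges weakly to the law of $\zeta_t^+ - (\zeta_t + \tilde Z)^+$, which still depends on the data through $\zeta_t$.

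To conclude, I would exhibit a single test function witnessing inconsistency. The target $-Z^+$ is supported on $(-\infty, 0]$, so $\E[f(-Z^+)] = 0$ for $f(x) = \min\{1, x^+\} \in \mathrm{BL}_1$. On the other hand, for any fixed $z > 0$ one verifies $\E[f(z^+ - (z + \tilde Z)^+)] \geq \E[f(-\tilde Z)\,\indi\{\tilde Z \geq -z\}]$, which is bounded away from $0$ as $z \to \infty$ (the limit is $\E[f(-\tilde Z)] > 0$). Since $\zeta_t \tod Z$ is non-degenerate, the event $\{\zeta_t > M\}$ has probability bounded below for every fixed $M$, so $\E[f(\sqrt t(\betab - \betagam)) \mid \text{data}]$ is bounded away from $0$ with non-vanishing probability and therefore cannot converge in probability to $\E[f(-Z^+)] = 0$. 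This contradicts $\betab - \betagam \topw \cJ_\FPPE$.

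\textbf{Main obstacle.} The most delicate step is justifying the directional expansion \emph{uniformly in the bootstrap weights}, so that the $o_p(1)$ remainder survives conditioning on the data. This follows from uniform convergence of $h \mapsto \sqrt t(\phi(1 + h/\sqrt t) - \phi(1))$ to $-h^+$ on compacts combined with conditional tightness of $\tilde\zeta_t$; alternatively, one can invoke the Fang--Santos-type principle that the standard multinomial bootstrap is inconsistent for Hadamard directionally differentiable but not linearly differentiable functionals, of which $\phi$ at $x=1$ is a prototypical example.
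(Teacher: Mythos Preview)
Your proposal is correct and reaches the same conclusion as the paper, but by a genuinely different route. You derive the same closed forms $\betast=1$, $\betagam=\min\{1,1/(P_t v_1)\}$, $\betab=\min\{1,1/(\Pboott v_1)\}$ and the same target law (your $-Z^{+}$ equals the paper's $\min\{Z,0\}$ after accounting for the sign convention on $Z$). From there the arguments diverge. The paper invokes the law of the iterated logarithm to extract, almost surely, a subsequence along which $\sqrt{t}(1-\bar v^{t})\geq c>0$ (hence $\betagam=1$), and then bounds $\sqrt{t}(\betab-\betagam)$ from below by a quantity converging to $\min\{0,Z+c\}$, which strictly stochastically dominates the target $\min\{0,Z\}$. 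You instead cast the problem as a directional delta method for $\phi(x)=\min\{1,1/x\}$ at the kink $x=1$, identify the data-dependent conditional limit $\zeta_t^{+}-(\zeta_t+\tilde Z)^{+}$, and witness failure with a bounded Lipschitz test function supported on the positive reals. Amusingly, the two proofs detect inconsistency on complementary events: the paper works on $\{\zeta_t<-c\}$ where $\betagam=1$, whereas your test function bites on $\{\zeta_t>M\}$ where $\betagam<1$ and the bootstrap places mass on the positive axis that the target never does.

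Two small presentational points. First, your exposition suggests sending $M\to\infty$ (``bounded away from $0$ as $z\to\infty$'') but the argument only needs a single fixed $M>0$: on $\{\zeta_t>M\}$ you already have the uniform lower bound $\E[f(-\tilde Z)\,\indi\{-M\le\tilde Z<0\}]>0$, while $\P(\zeta_t>M)\to\P(Z>M)>0$; taking $M$ large would drive the event probability to zero. Second, because $\phi$ is explicit here, the ``main obstacle'' you flag is not really an obstacle: the remainder $\sqrt{t}(\phi(1+h/\sqrt t)-\phi(1))+h^{+}=(h^{+})^{2}/(\sqrt t+h)$ is $O(1/\sqrt t)$ uniformly on compacts, so the conditional $o_p(1)$ is immediate without appealing to abstract Hadamard machinery. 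Your reference to the Fang--Santos principle is a nice conceptual framing that the paper's more hands-on LIL argument does not provide.
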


In fact, using the same argument in \citet{abrevaya2005bootstrap}, we can show that in the above example, $\sqrt t(\betagam - \betast) \tod \min\{Z_1, 0\}$, while 
$ \sqrt t (\betab - \betagam) \topw \min \{Z_1 + Z_2, 0\} - \min\{ Z_1, 0 \}$ 
where $Z_1,Z_2$ are independent copies of $\cN(0, \var(v_1))$.

\subsection{Bootstrapping FPPE under Full Budget Exhaustion}
\label{sec:poorbuyerfppe}
If FPPE has the additional structure that all buyers exhaust their budgets, i.e., $I_{=>} = \emptyset$, we can apply the 
numerical bootstrap \citep{li2020numerical} and the proximal bootstrap \citep{li2023proximal}. 
Equivalently, it requires the population EG in \cref{eq:pop_deg}
does not have strongly active constraints, $\nabla H(\betast) = 0 $, and the unconstrained optimum
coincides with the constrained optimum.

Under this additional structure, \cref{eq:fppe_asym_program} becomes 
\begin{align}
    \cJ_\FPPE = \argmin_{h : h_i \leq 0, i \in I_{==}} \| h + \cH \inv G\|_{\cH}\sq
    \notag
    \\
    = \argmin_{h: h_i \leq 0, i \in I_{==}} h\tp G + \tfrac12 h \tp \cH h
    \label{eq:expaned_stoch_prog}
\end{align}
and $G \sim \cN(0, \E[\nabla F(\cd, \betast)\nabla F(\cd, \betast)\tp])$.
So $\cJ_\FPPE$ is a distribution supported on the cone $\{ h \in \Rn: h_i \leq 0 \text{ if } i \in I_{==} \}$, with some probability mass distributed on the faces of the cone.

To obtain numerical bootstrap and proximal bootstrap estimates, 
we require a smoothing parameter $\epsilon_t \downarrow 0$ such that $\epsilon_t \sqrt{t} \to \infty$.
Then, to get $\betab_{\mathsf{nu,FPPE}}$ we solve
\begin{align}
    \argmin_{\b \in (0,1]^n} \frac1t \sumtau (1 + \ept \sqrt t (W_\tau -1)) F(\thetau,\b) \;,
     \label{eq:def_nu_bs}
\end{align}
and to get $\betab_\mathsf{pr, FPPE} $ we solve
\begin{align}
    {\argmin_{\beta \in [0,1]^n}   \epsilon_t (G^b)\tp (\beta - \betagam) + \tfrac12 \| \beta - \betagam \| _ {\hat \cH } \sq } \;,
    \label{eq:def_prox_bs}
\end{align}
where
\begin{align}
    \label{eq:def_Gb_H}
    G^b \defeq \sqrt t (\Pbt - \Pt) D_F(\cd, \betagam) \;, \;
    \hat \cH \defeq (\hat \cH_{k,\ell})_{k,\ell} \;.
\end{align}
Here
$D_F(\cd,\betagam)$ is a deterministic element in the subdifferential $\partial _{\b}F (\cd, \betagam) $
\footnote{
    We avoid writing $\nabla F(\cd, \betagam)$ 
    because in a finite FPPE there could be ties. And when ties happen for an item $\theta$, EG objective $\b\mapsto F(\t, \b)$ is not differentiable at $\betagam$.}.
The term $G^b$ estimates the Gaussian random variable $G$ in \cref{eq:expaned_stoch_prog}.
The numerical difference estimator is $\hat \cH_{k,\ell} = (\tdifft H_t)(\betagam)$, where 
    \begin{align*}
(\hat \nabla \sq _{k\ell, \eta} g)(\cdot) = [
        g(\cdot+ \eta e_k + \eta e_\ell) 
        -g(\cdot- \eta e_k + \eta e_\ell)\\
        -g(\cdot+ \eta e_k - \eta e_\ell)
        +g(\cdot- \eta e_k - \eta e_\ell) ]/(4\eta\sq),
\end{align*}
and $H_t$ is the finite-sample EG objective in \cref{eq:def_pop_eg}. In practice, both \cref{eq:def_nu_bs,eq:def_prox_bs} only need to be solved approximately with error in the objective up to $o_p(\ept \sq)$.
The proximal bootstrap in \cref{eq:def_prox_bs} is a bootstrap analogue of the distribution in \cref{eq:expaned_stoch_prog}.

The following theorem shows that in the budget-exhaustion case, the numerical bootstrap and proximal bootstrap converge to the correct limit distribution.
The proofs can be found in \cref{sec:proof:thm:nuboot_fm,sec:proof:thm:prboot_fm}.
\begin{theorem} 
    \label{thm:fppepoor}
If all buyers exhaust their budgets ($I_{=>} = \emptyset$), $\dtt = o(1) $, $\sqrt t\dtt  \to \infty$, then
    \begin{enumthmresult}
        \item  
        \label{thm:nuboot_fppepoor}
        $
        \epsilon_t\inv (\betab_{\mathsf{nu, FPPE}} - \betagam) \topw \cJ_\FPPE
        $ . 
        \item
        \label{thm:prboot_fppepoor}
        If $ \hat \cH \toprob \cH$, then
        $\epsilon_t 
        \inv (\betab_{\mathsf{pr, FPPE}} - \betagam ) \topw \cJ_\FPPE$ . 
    \end{enumthmresult}
\end{theorem}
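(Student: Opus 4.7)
The plan is to prove both parts by the same epi-convergence strategy. Introducing $h = (\b - \betagam)/\ept$, I would show that the rescaled bootstrap objective epi-converges conditionally in probability to the quadratic program $h \mapsto G\tp h + \tfrac12 h\tp \cH h$ restricted to $C \defeq \{h \in \Rn : h_i \leq 0 \text{ for } i \in I_0\}$, where $G \sim \cN(0, \cov[\nabla F(\cd, \betast)])$, and then invoke argmin-continuity for convex epi-convergent sequences to conclude $\ept\inv(\betab - \betagam) \topw \argmin_{h \in C}[G\tp h + \tfrac12 h\tp \cH h] = \cJ_\FPPE$, matching the closed form in \cref{eq:expaned_stoch_prog}. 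For both parts, the first step is to verify Painlev\'e-Kuratowski convergence of the rescaled feasible sets to $C$. The constraint $\betagam + \ept h \in (0,1]^n$ becomes $-\betagami/\ept < h_i \leq (1-\betagami)/\ept$. The lower bound is eventually inactive since $\betagami \toprob \betasti > 0$. For the upper bound, indices $i \in \Ic$ have $\betasti < 1$ so $(1-\betagami)/\ept \to +\infty$; every index with $\betasti = 1$ lies in $I_0$ by the hypothesis $I_+ = \emptyset$, where $\sqrt t(\betagami - 1) = O_p(1)$ from \cref{eq:def_jfppe} combined with $\ept\sqrt t \to \infty$ gives $(1-\betagami)/\ept = o_p(1)$, enforcing $h_i \leq 0$ in the limit.

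For Part 1 (numerical bootstrap), I would write the rescaled centered objective as $Q_t^{\boot}(h) \defeq \ept^{-2}[\tilde H_t^{\boot}(\betagam + \ept h) - \tilde H_t^{\boot}(\betagam)]$ with $\tilde H_t^{\boot}(\b) = H_t(\b) + \ept\sqrt t\,(\Pbt - \Pt) F(\cd, \b)$, and split it into a deterministic Taylor piece and a bootstrap empirical-process piece. Taylor expanding $H_t$ around $\betagam$ produces $\ept\inv \nabla H_t(\betagam)\tp h + \tfrac12 h\tp \nabla\sq H_t(\tilde\beta_t) h$; the Hessian converges to $\cH$ by \nameref{as:twice_diff} and the law of large numbers. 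The first-order term vanishes because, by KKT complementary slackness for the EG dual, the only nonzero coordinates of $\nabla H_t(\betagam)$ correspond to buyers with $\betagami = 1$, which under $I_+ = \emptyset$ are all limit-degenerate ($\deltasti = 0$); hence $\nabla H_t(\betagam) = O_p(1/\sqrt t)$, and dividing by $\ept$ with $\ept\sqrt t \to \infty$ yields $o_p(1)$. The empirical-process piece $\sqrt t\,\ept\inv(\Pbt - \Pt)[F(\cd, \betagam + \ept h) - F(\cd, \betagam)]$ converges conditionally to $G\tp h$ by bootstrap stochastic equicontinuity (following \citet{wellner1996bootstrapping}) combined with the Lipschitz continuity of $\b \mapsto F(\t, \b)$ and the bootstrap CLT, giving $Q_t^{\boot}(h) \topw G\tp h + \tfrac12 h\tp \cH h$ pointwise; convexity upgrades this to conditional epi-convergence.

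For Part 2 (proximal bootstrap), the change of variables makes the rescaled objective exactly $(G^b)\tp h + \tfrac12 h\tp \hat\cH h$ (up to an $h$-independent constant), so the argument follows immediately from $G^b \topw G$ (bootstrap CLT on the bounded map $D_F(\cd, \betagam)$, using the $L^2$-continuity of $\b \mapsto D_F(\cd, \b)$ at $\betast$ underwritten by the stochastic equicontinuity of \citet{liao2023stat}) and the hypothesis $\hat\cH \toprob \cH$. The main obstacle is the KKT-residual control in Part 1: one must show that the sample leftover budgets at degenerate buyers vanish faster than $\ept$, which crucially uses both $I_+ = \emptyset$ (so every KKT-active index is limit-degenerate with $\deltasti = 0$) and the smoothing-rate requirement $\ept\sqrt t \to \infty$. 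A secondary technical point is uniformity of the bootstrap empirical process over the data-dependent inner parameter $\betagam$; this is standard under \nameref{as:twice_diff} via stochastic equicontinuity on shrinking neighborhoods of $\betast$.
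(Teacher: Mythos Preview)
Your Part~2 argument is essentially the paper's: after the change of variables the objective is $(G^b)\tp h + \tfrac12 h\tp \hat\cH h$, and combining $G^b \topw G$, $\hat\cH \toprob \cH$, and set convergence of the rescaled feasible region to $C$ gives the result. The paper centers at $\betast$ rather than $\betagam$ (so that the constraint set $([0,1]^n-\betast)/\ept$ converges deterministically to $C$, avoiding your probabilistic argument for $(1-\betagami)/\ept = o_p(1)$), but this is a minor difference.

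Your Part~1 argument has a genuine gap in the quadratic expansion. You write that Taylor expanding $H_t$ around $\betagam$ gives a term $\tfrac12 h\tp \nabla^2 H_t(\tilde\beta_t) h$ with $\nabla^2 H_t(\tilde\beta_t) \toprob \cH$, but this is false. The sample objective $H_t(\beta) = \tfrac1t\sum_\tau \max_i \beta_i v_i^\tau - \sum_i b_i\log\beta_i$ has a piecewise-linear first summand, so wherever $H_t$ is twice differentiable its Hessian is simply $\Diag(b_i/\beta_i^2)$; the contribution of the $\max$ term to the curvature is zero almost everywhere. By contrast, $\cH = \nabla^2 H(\betast)$ contains a nontrivial piece $\nabla^2\E[\max_i\beta_i v_i(\theta)]|_{\betast}$ arising from how the winning regions shift with $\beta$, which the empirical Hessian cannot capture. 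This is precisely why the paper needs a numerical-difference estimator $\hat\cH$ (with stepsize $\eta_t \gg 1/\sqrt t$) rather than a pointwise Hessian; see the discussion around \cref{eq:def_Gb_H} and \cref{thm:hessian_est}. A secondary issue is that $H_t$ is typically nondifferentiable at $\betagam$ itself (ties occur in the finite FPPE), so writing $\nabla H_t(\betagam)$ is already problematic; the paper flags this in the footnote after \cref{eq:def_Gb_H}.

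The correct route to the quadratic term is the decomposition $P_t = (P_t - P) + P$: write $H_t(\betagam+\ept h) - H_t(\betagam) = (P_t - P)[F(\cd,\betagam+\ept h)-F(\cd,\betagam)] + [H(\betagam+\ept h)-H(\betagam)]$, use \nameref{as:twice_diff} to Taylor-expand the \emph{population} objective $H$ (which is smooth) to obtain $\tfrac12\ept^2 h\tp\cH h$, and control the empirical-process remainder via stochastic equicontinuity (\cref{lm:stocdiff_boot}, \cref{claim:htquadraticexpansion}). The paper packages this by verifying the conditions of Theorem~4.2 in \citet{li2020numerical}, but the essential analytic content is this $P_t = (P_t-P)+P$ split, not a direct Taylor expansion of $H_t$.
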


The proof proceeds by verifying conditions in \citet{li2020numerical} and \citet{li2023proximal}. Stochastic equicontinuity of certain processes is verified using results from \citet{liao2023stat}.
\begin{figure*}[h!]
    \centering
    \includegraphics[scale=.26]{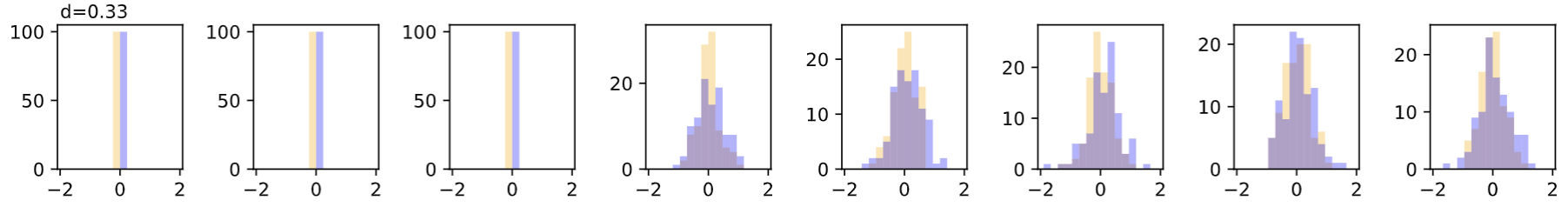}
    \caption{Bootstrap vs finite-sample distribution of an 8-buyer 1000-item FPPE.
    Values are i.i.d.\ uniformly distributed, and budgets are generated randomly in a way that the first three buyers have leftover budgets. Displayed are histograms of $\beta_1,\dots, \beta_8$.
    Purple: 100 samples of $\ept\inv (\betab-\betagam)$ according to \cref{eq:adboot_fppe} given one FPPE. Yellow: 100 samples of $\sqrt t (\betagam - \betast)$.
    Bootstrap distribution is very similar to FPPE distribution. The similarity is significant, because to obtain the distributions of FPPE, we need to observe multiple market equilibria, to which we usually do not have access.
    The bootstrap distribution, on the other hand, is generated based on just one finite FPPE. }
    \label{fig:slice_of_plot}
\end{figure*}

\subsection{Bootstrapping FPPE without Degenerate Buyers} 
\label{sec:bs_with_scs}

In real-world auction markets such as those at internet companies, some fraction of buyers do have leftover budgets~\citep{conitzer2022multiplicative}.
In this section, we give a bootstrap estimate of $\cJ_\FPPE$ under \nameref{as:constraint_qualification},
in which we allow users to have positive leftover budgets, but rule out degenerate buyers. 
Condition~\nameref{as:constraint_qualification} is equivalent to 
requiring that in the population EG in \cref{eq:pop_deg} there is no weakly active constraints (those whose Lagrangian multipliers are zero).
Condition~\nameref{as:constraint_qualification},
equivalent to $I_{==} = \emptyset$,
is realistic because degenerate buyers are a measure-zero edge case.
Note now $\cJ_\FPPE$ is a degenerate normal distribution supported on the hyperplane $\{h: h_i = 0, i \in I_{=} = I_{=>}\}$.

Choose two vanishing sequences $\dwt$ and $\dtt$.
Define the \emph{estimated} unpaced buyers $\Ihatp= \{i: \betagami > 1 - \dwt\}$
and the reduced feasible set $\hat B \defeq \{ \beta \in [0,1]^n: \beta_i = 1 \text{ for } i \in \hat I_{=} \}$. 
The proposed bootstrap estimator is  
\begin{align}
    \label{eq:adboot_fppe}
    \beta^{\boot} 
    & \defeq \argmin_{\beta \in \hat B}  \dtt (G^b)\tp (\beta - \betagam) + \tfrac12 \|\betagam- \beta\|_{\hat \cH} \sq  \;,
\end{align}
where
$G^\boot $ and $\hat \cH$ are defined in \cref{eq:def_Gb_H}.
The estimator has a nice geometric interpretation: we add certain appropriate noise to $\betagam$ and then project back to the reduced feasible set $\hat B$.
We call $\eps_t$ the bootstrap stepsize, whose effect is investigated in \cref{sec:exp}.
\begin{theorem}
    \label{thm:adaptive_boot}
    Let \nameref{as:constraint_qualification} hold in FPPE ($I_{==}=\emptyset$).
    Let $\dwt$, $\dtt = o(1) $, $\sqrt t\dtt  \to \infty$, $ \sqrt t \dwt \to c \in (0, \infty]$.
    If $ \hat \cH \toprob \cH$,
    then
$    \dtt ^{-1} ( \beta^{\boot} - \betagam) \topw \cJ_\FPPE$. Proof in \cref{sec:proof:thm:adaptive_boot}.
\end{theorem}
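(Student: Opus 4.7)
The plan is to combine three ingredients --- selection consistency for the estimated active set $\Ihatp$, a conditional bootstrap CLT for the gradient term $G^\boot$, and Painlev\'e--Kuratowski convergence of the minimizer of a sequence of constrained strictly convex quadratic programs --- and piece them together via a standard argmin-continuity argument.

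\textbf{Step 1 (selection consistency).} Under \nameref{as:constraint_qualification}, $I_0 = \emptyset$, so the active set equals $I_+$. For $i \in I_+$, the paragraph following \cref{ex:i01} records that the limit of $\sqrt{t}(\betagami - 1)$ is a point mass at $0$, i.e., $1 - \betagami = o_p(1/\sqrt t)$; since $\dwt \asymp 1/\sqrt t$, this gives $\betagami > 1 - \dwt$ with probability tending to one, so $i \in \Ihatp$ eventually. For $i \in \Ic$, consistency of the finite FPPE gives $\betagami \toprob \betasti < 1$, hence $\betagami < 1 - \dwt$ eventually since $\dwt \to 0$. Combining, $\P(\Ihatp = I_+) \to 1$.

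\textbf{Step 2 (rescaling and feasible-set convergence).} Changing variables via $h = \dtt\inv(\beta - \betagam)$ and dividing out the overall factor $\dtt\sq$, the bootstrap program rewrites as
\begin{align*}
\dtt\inv(\beta^\boot - \betagam) = \argmin_{h \in \cC_t} (G^\boot)\tp h + \tfrac12 \|h\|_{\hat \cH}\sq,
\end{align*}
with $\cC_t = \{h : \betagam + \dtt h \in \hat B\}$. On $\{\Ihatp = I_+\}$, $\cC_t$ is the intersection of the equalities $h_i = (1-\betagami)/\dtt$ for $i \in I_+$ with the boxes $h_i \in [-\betagami/\dtt, (1-\betagami)/\dtt]$ for $i \in \Ic$. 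By Step 1 and the rate conditions, $(1 - \betagami)/\dtt = o_p(1)$ for $i \in I_+$; for $i \in \Ic$ both endpoints diverge since $\betagami$ is bounded away from $\{0,1\}$ while $\dtt \to 0$. Hence $\cC_t$ converges in the Painlev\'e--Kuratowski sense (in probability) to the subspace $\cC_\infty = \{h : h_i = 0, i \in I_+\}$.

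\textbf{Step 3 (conditional convergence of the objective and argmin continuity).} A bootstrap CLT applied to $\sqrt t(\Pbt - \Pt)D_F(\cd, \betagam)$, together with $L^2(s)$-continuity of $\beta \mapsto D_F(\cd,\beta)$ at $\betast$ under \nameref{as:twice_diff} (in the vein of the equicontinuity results of \citet{liao2023stat}), yields $G^\boot \topw G \sim \cN(0, \cov[\nabla F(\cd, \betast)])$ conditionally on the data. Combined with $\hat \cH \toprob \cH \succ 0$, the bootstrap objective converges (conditionally, uniformly on compacta) to the strictly convex quadratic $G\tp h + \tfrac12 \|h\|_\cH\sq$. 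Invoking the standard argmin-continuity result for Mosco/epi-convergence of strictly convex programs with Painlev\'e--Kuratowski-converging feasible sets,
\begin{align*}
\dtt\inv(\beta^\boot - \betagam) \topw \argmin_{h \in \cC_\infty} G\tp h + \tfrac12 h\tp \cH h = \cJ_\FPPE,
\end{align*}
where the last equality recovers the closed form in \cref{eq:fppe_with_scs}.

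\textbf{Main obstacle.} The delicate step is Step 3: $D_F(\cd, \betagam)$ is data-dependent and nonsmooth at ties in $\max_i \betagami \vi(\t)$, so a direct bootstrap functional CLT does not apply. One must establish stochastic equicontinuity of $\sqrt{t}(\Pbt - \Pt)$ over an $L^2(s)$-neighborhood of $\betast$ --- the bootstrap analogue of the equicontinuity underlying the FPPE asymptotics of \citet{liao2023stat}; once that is in place, the CLT reduces to the classical multinomial bootstrap CLT at the fixed function $D_F(\cd, \betast)$ (using that under \nameref{as:twice_diff} ties occur with probability zero at $\betast$), and the remainder is routine.
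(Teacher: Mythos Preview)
Your proposal is correct and follows essentially the same route as the paper: selection consistency $\P(\Ihatp = I_+)\to 1$, epi/Painlev\'e--Kuratowski convergence of the rescaled feasible set to the critical subspace $\{h:h_i=0,\,i\in I_+\}$, conditional convergence of the objective via $G^\boot\topw G$ (handled in the paper exactly as you anticipate, by splitting off $\sqrt t(\Pbt-\Pt)(D_F(\cd,\betagam)-D_F(\cd,\betast))=o_p(1)$ using bootstrap stochastic equicontinuity) together with $\hat\cH\toprob\cH$, and then an argmin continuous-mapping argument. The only cosmetic difference is that you center the change of variables at $\betagam$ while the paper centers at $\betast$ and carries the $(\betast-\betagam)/\dtt=o_p(1)$ shift through the objective; the paper also inserts an explicit consistency step $\betab\toprob\betast$ that you absorb into the epi-convergence argument.
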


The estimator in \cref{eq:adboot_fppe} is proposed following ideas from \citet{li2023proximal,cattaneo2020bootstrap}, where the bootstrap is in fact approximating the 
random quadratic program \cref{eq:fppe_asym_program}.
Many existing works \citep{geyer1994asymptotics,li2020numerical,li2023proximal} require that strongly active constraints do not occur, and are thus not applicable for FPPE with buyers who have leftover budgets.
As with proximal bootstrap, our approach requires solving quadratic programs only. 

We briefly remark on the techniques used to prove \cref{thm:adaptive_boot}. 
We combine the 
theory of weak convergence \citep{vaart2023empirical} from statistics 
and epi-convergence theory~\citep{rockafellar1970convex} from optimization.
The reason is that weak convergence is a powerful tool to study asymptotics of statistical functionals, such as the $\argmin$ function, and epi-convergence is designed for studying constrained programs.
Such an approach dates back to \citet{geyer1994asymptotics} and \citet{molchanov2005theory}, and 
more recently was used by \citet{parker2019asymptotic} for constrained quantile regression, and \citet{li2020numerical} and \citet{li2023proximal} in the context of bootstrap.

Both proximal bootstrap (\cref{eq:def_prox_bs}) and 
our proposed bootstrap (\cref{eq:adboot_fppe})
require a numerical difference estimate of the Hessian (\cref{eq:def_Gb_H}). We provide a theorem to guide the choice of differencing stepsize.
\begin{theorem}[Hessian estimation, informal] 
    Consider the finite difference estimate defined in \cref{eq:def_Gb_H} with differencing stepsize $\eta_t = o(1)$ and $\eta_t\sqrt t \to \infty$. 
    Under regularity conditions,  $\hat \cH_{k\ell} - \cH_{k\ell} \asymp \eta_t^2 + \frac{1}{\eta_t \sqrt t} + \text{higher order terms}$.
    \label{thm:hessian_est}
    Proof in \cref{sec:thm:hessian_est}.
  \end{theorem}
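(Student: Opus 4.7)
The plan is to bound $\hat\cH_{k\ell} - \cH_{k\ell}$ by splitting it into three natural pieces:
\begin{align*}
\hat\cH_{k\ell} - \cH_{k\ell}
&= \underbrace{(\tdifft (H_t - H))(\betagam)}_{T_1 \;:\; \text{stochastic}}
+ \underbrace{(\tdifft H)(\betagam) - (\tdifft H)(\betast)}_{T_2 \;:\; \text{plug-in}} \\
&\quad + \underbrace{(\tdifft H)(\betast) - \cH_{k\ell}}_{T_3 \;:\; \text{FD bias}}.
\end{align*}
Here $T_3$ is the pure symmetric second-difference bias of the deterministic population objective, $T_2$ is a plug-in error from evaluating at $\betagam$ rather than at $\betast$, and $T_1$ is the stochastic fluctuation from substituting the empirical $H_t$ for $H$. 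I would bound each and sum.

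For $T_3$, strengthening \nameref{as:twice_diff} to (say) $C^4$ smoothness of $H$ near $\betast$, a fourth-order Taylor expansion of $H$ shows that the symmetric second difference has deterministic bias $T_3 = O(\eta_t^2)$; this is exactly the $\eta_t^2$ term in the claim. For $T_2$, the map $\beta \mapsto (\tdifft H)(\beta)$ is continuously differentiable (inherited from smoothness of $H$) with derivative bounded on a neighborhood of $\betast$; combined with the root-$t$ consistency $\betagam - \betast = O_p(1/\sqrt t)$ established via the FPPE asymptotics of \citet{liao2023stat}, this gives $T_2 = O_p(1/\sqrt t)$, which falls into the ``smaller terms'' bucket.

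The crux is $T_1$, which I rewrite as
\begin{align*}
T_1 = \frac{1}{4\eta_t^2}\,(P_t - P)\,g_{\eta_t, k, \ell, \betagam},
\end{align*}
where
\begin{align*}
g_{\eta, k, \ell, \beta}(\theta)
&= F(\theta, \beta + \eta e_k + \eta e_\ell) - F(\theta, \beta + \eta e_k - \eta e_\ell) \\
&\quad - F(\theta, \beta - \eta e_k + \eta e_\ell) + F(\theta, \beta - \eta e_k - \eta e_\ell)
\end{align*}
is the second-difference numerator. Since $\beta \mapsto F(\theta, \beta)$ is Lipschitz on a compact neighborhood of $\betast$ uniformly in $\theta$ (with constant controlled by $\vbar$ and the log term), we have $|g_{\eta_t}(\theta)| = O(\eta_t)$ pointwise and $\|g_{\eta_t}\|_{L^2(P)} = O(\eta_t)$. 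Applying a Bernstein/maximal inequality uniformly over the localized class $\{g_{\eta_t, k, \ell, \beta} : \|\beta - \betast\| \leq \delta\}$ and invoking the Donsker-type equicontinuity for $\{F(\cdot, \beta)\}$ already developed in \citet{liao2023stat} yields $(P_t - P) g_{\eta_t, k, \ell, \betagam} = O_p(\eta_t / \sqrt t)$, whence $T_1 = O_p(1/(\eta_t \sqrt t))$.

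The main obstacle is the argument for $T_1$: one must show that the fluctuation $(P_t - P) g_{\eta_t, k, \ell, \betagam}$ shrinks linearly in $\eta_t$, rather than giving only a generic $O_p(1/\sqrt t)$ bound, while simultaneously accommodating the randomness of the center $\betagam$. Both issues are handled together by a localized stochastic-equicontinuity argument that exploits the bracketing-entropy estimates for the function class $\{F(\cdot, \beta)\}$ available from the empirical process machinery in \citet{liao2023stat}. Adding the bounds on $T_1$, $T_2$, and $T_3$ yields the advertised rate $\eta_t^2 + 1/(\eta_t \sqrt t) + \text{smaller terms}$.
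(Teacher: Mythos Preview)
Your proposal is correct and follows essentially the same approach as the paper: the same three ingredients (symmetric second-difference bias $O(\eta_t^2)$ via $C^4$ Taylor expansion, plug-in error $O_p(1/\sqrt t)$ via root-$t$ consistency of $\betagam$, and stochastic fluctuation $O_p(1/(\eta_t\sqrt t))$ via the localized equicontinuity of \citet{liao2023stat}) appear in both. The only cosmetic difference is that the paper further splits your $T_1$ by adding and subtracting the empirical second difference at the fixed point $\betast$, so that the leading stochastic term is analyzed at a nonrandom center via an explicit bias--variance calculation while the recentering residual is shown to be $o_p(1/(\eta_t\sqrt t))$; your direct uniform bound over $\{\beta:\|\beta-\betast\|\le\delta\}$ achieves the same conclusion.
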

  
  By setting $\eta_t\sq = 1/(\eta_t \sqrt t)$ we obtain the optimal choice $\eta_t \asymp t^{-1/6}$.
  The proof of \cref{thm:hessian_est} uses empirical process theory to handle the nonsmoothness of the EG objective.
  
\subsection{Confidence Regions for General FPPE}
\label{sec:generalfppe}

In \cref{sec:poorbuyerfppe,sec:bs_with_scs} we assumed either $I_{==}$ or $I_{=>}$ to be empty sets. 
Now we discuss bootstrap inference without such assumptions.
We can construct a confidence region for $\betast$ using bootstrap test inversion. Suppose we have a scalar statistic $T(\betast, \deltast, \theta^1,\dots,\theta^t )$, and  an upper bound estimate $c \in \R$ of the $(1-\alpha)$-quantile of its limit distribution.
Then the region $\{ (\beta,\delta): T(\beta, \delta, \theta^1,\dots,\theta^t ) \leq c \}$ is an asymptotically-valid confidence region
for $(\betast,\deltast)$.

First, we introduce a statistic based on the Lagrangian of the EG program.
The idea of using the Lagrangian or Karush-Kuhn-Tucker (KKT) system for inference in constrained programs also appears in \citet{li2023proximal,hsieh2022inference}.
Consider the sample Lagrangian
$L_t(\beta, \delta) = H_t(\beta) - \delta \tp (1_n - \b) $ for $\beta \in (0,1]^n$ and $0\leq \delta \leq b$. 
Define the statistic for some $\kappa \in (0, \infty]$:
\begin{align}
     T^\gam(\beta,\delta)   = - \inf_{ \beta' \in \beta + \frac{1}{\sqrt t }\B_\kappa} t \big(L_t(\beta', \delta) - L_t (\beta, \delta) \big) \;,
    \label{eq:def_Tgam}
\end{align}
where $\B_\kappa = \{ h\in \Rn: \|h\nmt \leq \kappa \}$.
The statistic $T^\gam$ finds the local minimum value of the Lagrangian over a $\frac1{\sqrt t}$-neighborhood of $\beta$.

Next, we introduce the bootstrap $T^b$ to estimate an ``upper bound'' of the distribution of $T^\gamma(\betast,\deltast)$.
\begin{align*}
    T^b 
    =  - \inf_{\beta \in \Rnp}(\dtt (G^b)\tp (\beta - \betagam) + \tfrac12 \|\betagam- \beta\|_{\hat \cH} \sq)  / (\dtt)\sq\;.
\end{align*}
The quadratic function of $\beta$ inside $\inf$ aims to estimate a quadratic expansion of the Lagrangian at $(\betast,\deltast)$.

To see that $T^b$ is an upper bound of $T^\gamma(\betast, \deltast)$, it turns out that 
$T^\gamma (\betast, \deltast) \tod - \inf_{h \in \B_\kappa} G \tp h + \frac12 h\tp \cH h $, while $ T^b \topw - \inf_{h \in \Rn} G \tp h + \frac12 h\tp \cH h$ where $G\sim \cN(0, \cov(\nabla F(\cd, \betast))) $.
For the same realization of $G$, the limit distribution of $T^b$ is greater than or equal to that of $T^\gamma(\betast,\deltast)$.

Now we are ready to introduce the confidence region.
Given a threshold value $c$, the statistic $T^\gamma$ induces the region 
\begin{align*}
    C^\gamma ( c )  & = \{ (\beta,\delta) : T^\gam(\beta,\delta) \leq c  \} 
    \\
    & \cap \{(1_n - \beta) \tp \delta = 0, 0 \leq \b \leq 1_n, 0\leq \delta \leq b \} \;.
\end{align*}
Let $c^b_{1-\alpha}$ be the conditional $(1-\alpha)$-quantile of $T^b$, i.e., $c^b_{1-\alpha} = \inf \{ x: \P(T^b \leq x | \{ \thetau\} ) \geq 1-\alpha\}$. Then a confidence region for $(\betast, \deltast)$ is $C ^\gamma(c^b_{1-\alpha})$.

\begin{theorem}\label{thm:generalFPPE_coverage}

    Suppose 
    $\dtt = o(1)$,
    $\dtt \sqrt t \to \infty$, $\hat \cH \toprob \cH$.
    Let $T^ \infty$ be the limit distribution of $T^\gam (\betast, \deltast)$. 
If the CDF of $T^\infty$ is continuous at the $(1-\alpha)$-th quantile of $T^\infty$, then 
$\liminf_{t\to\infty} \P( ( \betast,\deltast) \in C^\gamma(c^b_{1-\alpha})) \geq 1-\alpha$.
Proof in \cref{sec:proof:thm:generalFPPE_coverage}.
\end{theorem}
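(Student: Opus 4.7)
The plan is to reduce the coverage bound to three steps: (i) show $T^\gamma(\betast,\deltast)\tod T^\infty$; (ii) show the conditional law of $T^b$ converges in probability to a distribution $T^\star$ that stochastically dominates $T^\infty$; (iii) conclude by a Portmanteau argument. Since $(\betast,\deltast)$ lies in the feasible box $[0,1]^n\times[0,b]$ by definition of FPPE, $\P((\betast,\deltast)\in C^\gamma(c^b_{1-\alpha})) = \P(T^\gamma(\betast,\deltast)\leq c^b_{1-\alpha})$, so it suffices to compare these two random variables.

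For (i), I would Taylor expand the sample Lagrangian $L_t(\beta,\delta)= H_t(\beta) - \delta\tp(1_n-\beta)$ at $(\betast,\deltast)$. Using the population KKT identity $\nabla H(\betast) + \deltast = 0$ (componentwise, using \nameref{as:twice_diff} to justify classical gradients of $H$), one expects
\begin{align*}
X^\gamma(\betast,\deltast,\betast + h/\sqrt t)
&= \sqrt t\,(\nabla H_t-\nabla H)(\betast)\tp h \\
&\quad{}+ \tfrac12 h\tp \cH h + o_p(1),
\end{align*}
uniformly in $h\in\B_\kappa$. The linear term equals $\sqrt t (P_t-P) D_F(\cd,\betast)\tod G \sim\cN(0,\cov[\nabla F(\cd,\betast)])$ by the CLT. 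Applying the continuous mapping theorem to the functional $(\xi,A)\mapsto -\inf_{h\in\B_\kappa}(\xi\tp h + \tfrac12 h\tp A h)$, continuous in $(\xi,A)$ when $A\succ 0$, gives $T^\gamma(\betast,\deltast)\tod T^\infty := -\inf_{h\in\B_\kappa}(G\tp h + \tfrac12 h\tp\cH h)$.

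For (ii), reparametrize $\beta = \betagam + \dtt h$ inside $X^b$ to obtain $X^b(\betagam + \dtt h) = (G^b)\tp h + \tfrac12 h\tp\hat\cH h$. Because $\betagami>0$ and $\dtt\to 0$, the constraint $\betagam+\dtt h\in\Rnp$ relaxes to all of $\Rn$ in the limit. Combined with a bootstrap CLT giving $G^b\topw G$ (the same argument used behind \cref{thm:prboot_fppepoor}) and the hypothesis $\hat\cH\toprob\cH$, this produces the conditional convergence of $T^b$ to $T^\star:=-\inf_{h\in\Rn}(G\tp h + \tfrac12 h\tp\cH h) = \tfrac12 G\tp\cH\inv G$. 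Since $\B_\kappa\subseteq\Rn$, minimizing over the smaller set yields a larger infimum, so $T^\star\geq T^\infty$ pathwise, hence $c^\star_{1-\alpha}\geq c^\infty_{1-\alpha}$; and because $T^\star$ is a nondegenerate Gaussian quadratic form its CDF is continuous, which converts the weak bootstrap convergence into $c^b_{1-\alpha}\toprob c^\star_{1-\alpha}$.

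Combining (i) and (ii) by Slutsky and Portmanteau, at continuity points of the CDF of $T^\infty$,
\begin{align*}
\P\big(T^\gamma(\betast,\deltast)\leq c^b_{1-\alpha}\big)
&\to \P\big(T^\infty\leq c^\star_{1-\alpha}\big) \\
&\geq \P\big(T^\infty\leq c^\infty_{1-\alpha}\big) = 1-\alpha.
\end{align*}
The hard part will be the uniform-in-$h$ remainder in step (i): pointwise the expansion is routine, but the EG integrand $F(\theta,\beta) = \max_i \beta_i v_i(\theta) - \sum_i b_i \log\beta_i$ is only piecewise smooth in $\beta$ due to the $\max$, so the $o_p(1)$ control must hold uniformly over $h\in\B_\kappa$ at the $1/\sqrt t$ scale. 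The right tool is the stochastic equicontinuity machinery for the EG empirical process of \citet{liao2023stat}, transported into the Lagrangian form; this is also where the degenerate directions in $I_0$ enter and create the gap between $T^\infty$ and $T^\star$ that forces the inequality (rather than equality) in the final coverage.
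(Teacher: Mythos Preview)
Your proposal is essentially correct and tracks the paper's proof closely: the paper also derives $T^\gamma(\betast,\deltast)\tod T^\infty=-\inf_{h\in\B_\kappa}(G\tp h+\tfrac12 h\tp\cH h)$ via the uniform quadratic expansion of the Lagrangian (using the equicontinuity results of \citet{liao2023stat}, exactly as you anticipate), shows $T^b\topw T^{b,\infty}=-\inf_{h\in\Rn}(G\tp h+\tfrac12 h\tp\cH h)$, and observes the pathwise dominance $T^{b,\infty}\geq T^\infty$ from $\B_\kappa\subset\Rn$.

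Two small points. First, your final step invokes Portmanteau at $c^\star_{1-\alpha}$, but the hypothesis only gives continuity of the CDF of $T^\infty$ at $c^\infty_{1-\alpha}$, and in general $c^\star_{1-\alpha}>c^\infty_{1-\alpha}$. The paper sidesteps this by an $\epsilon$-argument: it shows that on an event of probability tending to one, $c^b_{1-\alpha}\geq c_{1-\alpha-\epsilon}$ (the $(1-\alpha-\epsilon)$ quantile of $T^\infty$), and then passes $\epsilon\downarrow 0$. This avoids ever needing continuity at $c^\star_{1-\alpha}$. Your route can be repaired the same way, or by noting that $T^\infty$ is a continuous function of a nondegenerate Gaussian and hence has continuous CDF everywhere, but as written the step is not quite closed. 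Second, your closing remark misattributes the gap between $T^\infty$ and $T^\star$ to the degenerate directions $I_0$; in fact the gap comes purely from the localization $\B_\kappa$ versus $\Rn$ (the paper notes that when $\kappa=\infty$ the two limits coincide). The sets $I_0,I_+$ do not enter the statistic $T^\gamma$ at all, which is precisely why this confidence region works without \nameref{as:constraint_qualification}.
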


The condition on the continuity of the CDF is mild and commonly seen in the literature \citep{beran1984}. 
The cost that comes with the general applicability of the confidence region $C^\gam(c^b_{1-\alpha})$ is computational.
To decide whether a point $(\b,\delta)$ is in the region one solves the optimization problem in \cref{eq:def_Tgam}.

\section{Experiments}

We now conduct experiments to investigate the 
performance of the bootstrap estimator \cref{eq:adboot_fppe}
in FPPE with \nameref{as:constraint_qualification} conditions.
We aim to (1) verify that the bootstrap produces a consistent estimate of the FPPE asymptotic distribution, and (2) study the effect on the bootstrap of the stepsize parameter $\ept$ and market parameters, such as the number of items, number of buyers, proportion of budget-constrained buyers, and the value distributions. 

\textbf{Synthetic experiments.}
In \cref{sec:exp_sim} 
we consider an ideal scenario where buyers' values are i.i.d.\ draws from some distribution, i.e., $v_1,\dots, v_n \sim_{iid} F_v$. 
To assess the effect of the tail of the value distributions, we take $F_v$ to be a uniform, exponential, or truncated normal distribution.
We visualize and compare two setups:
\emph{1) true resampling,}
where the finite-sample distribution of $\sqrt t (\betagam - \betast)$, obtained by repeatedly drawing independent FPPE instances, and 
\emph{2) bootstrap:} $\ept\inv (\betab - \betagam)$ as defined in \cref{eq:adboot_fppe}, obtained by bootstrapping only one FPPE instance. 
We also vary the bootstrap stepsize $\epsilon_t$. 
Experiments confirm that our bootstrap \cref{eq:adboot_fppe} is consistent, fairly robust under a wide range of market parameters when bootstrap stepsize is chosen appropriately.

\textbf{Semi-real experiments.}
In \cref{sec:exp_ipinyou} we construct realistic instances from real-world auction markets based on the iPinYou dataset \citep{liao2014ipinyou}.
The dataset contains raw log data of the bid, impression, click, and conversion history on the iPinYou platform.
From the dataset we estimate the click-through rate of 
impressions using logistic regression and simulate realistic advertisers' values by perturbing the regression coefficients.
We treat the sum of \pmrs as the target parameter and use percentiles of the bootstrap estimates based on \cref{eq:adboot_fppe} to construct confidence intervals.
We assess the effect on the coverage rate of the number of items, number of advertisers, the bootstrap stepsize $\ept$, and the proportion of unpaced buyers.
These experiments show that our bootstrap is  suitable for realistic auction markets.

\section{Future Directions}

A bootstrap theory for FPPE without regularity conditions on either the buyers (e.g. \cref{as:constraint_qualification}) or the CDF assumption in \cref{thm:generalFPPE_coverage} would be desirable. 
However, we suspect that this will be a difficult task, since bootstrapping completely general constrained convex programs remains an open problem. 
Secondly, we saw in our experiments that Hessian estimation is important for the performance of our bootstrap methods. Thus, a better understanding of how to perform Hessian estimation for the best performance on real-world problems would be useful.
In practice it would also be highly desirable to have a bootstrap theory that has some form of guarantees under nonstationary input data.

\section*{Acknowledgements}
This research was supported by the Office of Naval Research awards N00014-22-1-2530 and N00014-23-1-2374, and the National Science Foundation awards IIS-2147361 and IIS-2238960.

\section*{Impact Statement}
This paper presents work whose goal is to advance the field of Machine Learning. There are many potential societal consequences of our work, none which we feel must be specifically highlighted here.

\newpage
\bibliography{refs.bib}
\bibliographystyle{icml2024}
\newpage
\appendix
\onecolumn
\section{Omitted Main Text}
\subsection{Notations}

Let $A = [A_1  ; \dots; A_n ]$ denote the matrix constructed by stacking $A_i$ from top to bottom. Vectors are column vectors by default. 
For a matrix $H$, a vector $G$, an index set $I \subset [n]$, we let $H_{II} = (H_{ij})_{i\in I, j\in I}$ to denote the $|I|\times |I|$ matrix consisting of entries in $H$, and $G_I$ be the subvector with entries indexed by $I$. We let $\Rnp = [0,\infty )^n$. 
Furthermore, we let $A\pinv $ be the Moore-Penrose pseudo inverse of a matrix $A$.
Denote by $e_j$  the $j$-th unit vector.

For a measurable space $(\Theta, \diff \theta)$, we let $L^p$ (and $L^p_+$, resp.) denote the set of (nonnegative, resp.) $L^p$ functions on $\Theta$ w.r.t\ the integrating measure $\diff \theta $ for any $p\in [1, \infty]$ (including $p=\infty$). 
We treat all functions that agree on all but a measure-zero set as the same.
For a sequence of random variables $\{X_n\}$, we say $X_n = O_p(1)$ if for any $\epsilon > 0$ there exists a finite $M_\epsilon$ and a finite $N_\epsilon$ such that $\P(|X_n| > M_\epsilon) < \epsilon$ for all $n\geq N_\epsilon$. We say $X_n = o_p(1)$ if $X_n$ converges to zero in probability. 
\begin{tabular}{r|l}
    \textbf{Symbol} & \textbf{Meaning} \\
    \hline
    $\tod$ & convergence in distribution of random vectors \\
    $\rightsquigarrow$ & weak convergence in metric space\\
    $\topw$ & weak conditional convergence in metric space \\
    $b, b_i$ & budgets \\
    $\betast, \betagam, \betab$ & pacing multipliers \\
    $e_i$ & the $i$-th basis vector \\
    $\epsilon_t$ & stepsize parameter in numerical bootstrap and proximal bootstrap and the proposed bootstrap \cref{eq:adboot_fppe} \\
    $\eta_t$ & differencing stepsize in finite-difference estimator of Hessian\\
    $\deltast, \deltasti$ &  leftover budget \\
    $\delta_t$ & constraint slackness in the proposed bootstrap \cref{eq:adboot_fppe} \\
    $D_i(p)$ & demand set in a Fisher market \\
    $F$ and $D_F$ & $F$ is the EG objective defined in \cref{eq:def:F}, and $D_F$ a deterministic selection of subgradients\\
    $\cov(\nabla F(\cd,\betast))$ & $\E[( \nabla F(\cd,\betast) - \E[\nabla F(\cd,\betast)])(\nabla F(\cd,\betast) - \E[\nabla F(\cd,\betast)])\tp]$\\
    $\gamma$ & observed item set in LFM and FPPE \\
    $G, G^b$ & a normal random variable $\cN(0, \cov(\nabla F(\cd,\betast)))$ and its bootstrap estimate \\
    $h$ & the quadratic program in \cref{eq:fppe_asym_program} \\
    $\cH$, $\widehat {\cH}$ & the Hessian matrix of $H$ at $\betast$, and its finite-difference estimator \\
    $H, H_t$ &  population and sample dual EG objective \\
    $I$, $\Ic$& The set of unpaced ($\beta_i^*=1$) and paced buyers ($\beta_i^* < 1$) buyers, respectively  \\
    $I_{==}$, $I_{=>}$& The set of unpaced buyers with $\delta_i^*=0$ and $\delta_i^* >0$, respectively  \\
    $\cJ_\LFM, \cJ_\FPPE$ & limit distributions of interest, defined in \cref{eq:def_cJLMF,eq:def_jfppe} \\
    $ \ell^\infty (K)$ & the space of bounded functions $f: K \to \R$ \\
    $p$ & price function in Fisher market and FPPE \\ 
    $P$ & matrix whose diagonal is $\indi(\betasti < 1)$, $i \in [n]$ \\
    $P_t, \Pexbt, P^b_t$ & expectation operators for the empirical distribution, exchangeable bootstrap distribution,\\& and the classical multinomial bootstrap distribution \\ 
    $\ust,\usti$ & equilibrium utility values in LFM and FPPE \\
    $s(\cdot)$ & supply function (a probability density) \\
    $v, v_i, v_i(\t)$ &  valuation functions\\
    $\xst, \xsti$ & equilibrium allocations in LFM and FPPE
    \end{tabular}

\subsection{Related Work} \label{sec:relatedwork}

    \textbf{Statistical Inference in Equilibrium Models.}
    \citet{liao2023fisher,liao2023stat}
    study statistical properties of LFM and FPPE, respectively.
    \citet{Wager2021,munro2021treatment,sahoo2022policy}
    take a mean-field game modeling approach and perform policy learning with a gradient descent method.
    \citet{johari2022experimental}
    study a Markov chain model of two-sided platform and investigate the effect of bias under different market balance condition.
    \citet{munro2023causal}
    considers global treatment effects in a market where the allocation mechanism exhibits certain structures. 
    Different from these work, this paper focuses on 
    estimating the asymptotic distribution of the market equilibrium,
    uses bootstrap to conduct inference and develops its statistical theory in the specific models of LFM and FPPE.

    \textbf{Bootstrapping $M$-estimators/mathematical programs.}
    There is a line of research on bootstrapping $M$-estimators 
    \citep{lahiri1992bootstrapping,gine1992bootstrap,wellner1996bootstrapping,bose2001generalised,lee2012general,patra2018consistent,cattaneo2020bootstrap}.
    For constrained $M$-estimators, one needs to be cautious about bootstrap procedures since they could produce inconsistent estimates of the target distribution~\citep{andrews2000inconsistency}.
    Bootstrapping constrained estimators is studied in \citet{li2023proximal} and \citet{li2020numerical}; in these works it is assumed that there are no strongly active constraints.
    In the FPPE setting strongly active constraints do occur, and we use epi-convergence theory to remedy this.

\subsection{Examples of Exchangeable Bootstrap}
\label{sec:exchangeable_bs_example}

\begin{example}
    The multinomial bootstrap corresponds to sampling with replacement. It satisfies \cref{as:exboot} with the constant $c^2 = 1$.
\end{example}

\begin{example}
    Sample without replacement.
    Let $h = \lfloor \alpha t \rfloor$ be the number of samples not chosen for some $\alpha \in (0,1)$. Concretely, let $w_\tau = \frac{t}{t-h}$ for $1\leq \tau \leq t-h$ and $0$ otherwise.
    Then $W$ is the vector of $(w_1,\dots, w_t)$ ordered at random independent of data.
    \cref{as:exboot} is satisfied with the constant $c^2 = \alpha / (1-\alpha)$.
\end{example}

\begin{example} I.i.d.\ weights.
    Let $w_1, \dots, w_t$ be i.i.d.\ draws from some distribution with finite $(2+\epsilon)$ moment, and $\bar w = \frac1t \sumtau w_\tau$. Define the bootstrap weights $W_\tau = w_\tau / \bar w$.
    \cref{as:exboot} is satisfied with the constant $c^2 = \var(w_1) / (\E[w_1])^2$.
\end{example}
For more examples of exchangeable bootstrap weights we refer readers to \citet{praestgaard1993exchangeably} and \citet{cheng2015moment}.
The wide range of bootstrap weights allowed by \cref{as:exboot} provides flexibility for practical application.

\subsection{Definition of Finite LFM and FPPE}
\label{sec:defs_lfm_fppe}
Here we give a formal definition of finite LFM and FPPE.
Let $\vitau = \vithetau$ be the valuation for the $\tau$'th sampled item.

\begin{defn}[Finite LFM] \label{def:observed_market}
    The finite observed LFM, denoted $\oLFM(b,v,\sigma, \gamma)$, is a allocation-price tuple $({x}, p) \in \R^{t\times n}_+ \times \Rnp $ such that the following hold:
\begin{enumerate}
    \item Supply feasibility and market clearance: $\sumi\xtaui \leq 1 $ and  $\sum_\tau \ptau (1- \sumi \xitau)  = 0$.

    \item Buyer optimality: $x_i \in D_i (p) = \argmax_{x_i} \{ \sum_\tau \xitau \vitau : \sigma \sum_\tau \xitau \ptau \leq b_i , 0 \leq \xitau \leq 1 \}$, the demand set given the prices. 
\end{enumerate}
\label{defn:observed_market}
\end{defn}
Suppose we have a finite LFM equilibrium $(x,p)=\oLFM(b,v, \sigma = 1/t, \gamma)$.
Then  $u^\gam_i = \sigma \sumtau \xitau \vitau $ is the utility of buyer $i$ in equilibrium, and $\betagami = b_i / \ugami$ is the utility price of buyer $i$.

\begin{defn}[Finite FPPE, \citet{conitzer2022pacing}]
    \label{def:observed_fppe}
   The finite observed FPPE, $\oFPPE(b,v, \sigma , \gamma)$, is the unique tuple
    $(\beta,p) \in [0,1]^n \times \R^t_+ $ 
    such that there exists $x_i^\tau \in [0,1]$ satisfying:
\begin{enumerate}
    \item 
    (First-price) For all $\tau$, $\ptau = \max_i \betai \vitau$. For all $i$ and $\tau$, $\xitau > 0$ implies $\betai \vitau =\max_k \beta_k v_k^\tau$. 
    \item
    (Supply and budget feasible)  For all $i$, $ \sigma \sum_\tau \xitau \ptau  \leq b_i$. For all $\tau$, $\sumi\xitau \leq 1$.  
    \item
    (Market clearing)  For all $\tau$, $\ptau > 0$ implies $ \sumi\xitau = 1 $.
    \item
    (No unnecessary pacing) For all $i$, $ \sigma \sum_\tau \xitau \ptau  < b_i$ implies $\betai = 1$.
\end{enumerate}
\end{defn}

\subsection{Numerical Bootstrap and Proximal Bootstrap for LFM}
\label{sec:nub_proxb_lfm}

We briefly review two valid bootstrap procedures and the consistency theory based on \citet{li2020numerical,li2023proximal}.
In this section we only consider multinomial bootstrap weights.

Given a sequence $\epsilon_t$ of positive numbers converging zero, the numerical bootstrap estimator is defined as
\begin{align*}
    \betab_{\mathsf{nu, LFM}} 
    & = \argmin_{\beta \in \Rnp} (\Pt + \epsilon_t \sqrt{t}(\Pbt - \Pt)) F(\cdot,\beta)
    \\
    & = \argmin_{\b \in \Rnp} \frac1t \sumtau (1 + \ept \sqrt t (W_\tau -1)) F(\thetau,\b)
\end{align*}
\begin{restatable}{theorem}{nubootlfm}
    \label{thm:nuboot_fm}
    Let $\epsilon_t = o(1)$ and $\epsilon_t\sqrt{t}\to \infty$.
    Then 
    $\epsilon_t\inv(\betab_{\mathsf{nu, LFM}} - \betagam ) \topw \cJ_\LFM$. 
\end{restatable}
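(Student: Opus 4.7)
The plan is to apply the numerical bootstrap framework of \citet{li2020numerical}, specialized to the (unconstrained, since $\betast \in \Rnpp$ strictly) LFM problem. The key idea is to rescale by $\epsilon_t$: let $h_t^b = \epsilon_t^{-1}(\betab_{\mathsf{nu, LFM}} - \betagam)$ and define the localized criterion
\[
\Psi_t^b(h) \;=\; \epsilon_t^{-2}\!\left[(\Pt + \epsilon_t\sqrt{t}(\Pbt-\Pt))F(\cdot,\betagam + \epsilon_t h) \;-\; (\Pt + \epsilon_t\sqrt{t}(\Pbt-\Pt))F(\cdot,\betagam)\right],
\]
so that $h_t^b \in \argmin_h \Psi_t^b(h)$, with the minimization (asymptotically) unconstrained since $\betast$ is in the interior of $\Rnp$. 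The goal is to show $\Psi_t^b \topw \Psi^\infty(h) := \tfrac12 h\tp \cH h + G\tp h$ (for $G\sim\cN(0,\cov[\nabla F(\cdot,\betast)])$) in the sense of hypi-convergence on compacts, and then invoke an argmin-continuous-mapping theorem to conclude $h_t^b \topw \cH\inv G$, whose law is exactly $\cJ_\LFM$ by \cref{eq:def_cJLMF}.

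The core step is the decomposition $\Psi_t^b(h) = A_t(h) + B_t^b(h)$ with
\[
A_t(h) = \epsilon_t^{-2}\bigl[P_t F(\cdot,\betagam+\epsilon_t h) - P_t F(\cdot,\betagam)\bigr],
\qquad
B_t^b(h) = \epsilon_t^{-1}\sqrt{t}(\Pbt-\Pt)\bigl[F(\cdot,\betagam+\epsilon_t h) - F(\cdot,\betagam)\bigr].
\]
For $A_t$, I would write $P_t = H + (P_t - P)$ and use \nameref{as:twice_diff} to second-order Taylor expand $H$ around $\betast$; since $\nabla H_t(\betagam) = 0$ at the interior minimizer $\betagam$, the linear term vanishes and $A_t(h)\to \tfrac12 h\tp \cH h$ uniformly on compacts. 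For $B_t^b$, I would further linearize as $B_t^b(h) = (G^b)\tp h + R_t^b(h)$ where $G^b = \sqrt{t}(\Pbt-\Pt)D_F(\cdot,\betagam)$ converges conditionally to $G$ by the bootstrap CLT for Donsker classes, and the remainder
\[
R_t^b(h) \;=\; \epsilon_t^{-1}\sqrt{t}(\Pbt-\Pt)\bigl[F(\cdot,\betagam+\epsilon_t h) - F(\cdot,\betagam) - \epsilon_t h\tp D_F(\cdot,\betagam)\bigr]
\]
is $o_{p_W}(1)$ uniformly in $\|h\|\le K$ by a stochastic equicontinuity argument applied to the shrinking class $\{\,\epsilon_t\inv[F(\cdot,\betagam+\epsilon_t h)-F(\cdot,\betagam)-\epsilon_t h\tp D_F(\cdot,\betagam)] : \|h\|\le K\}$, whose $L_2$-radius is $O(\epsilon_t)\to 0$, combined with the fact that $\epsilon_t\sqrt{t}\to\infty$ ensures enough resolution to see the local structure while $\epsilon_t\to 0$ ensures the remainder vanishes.

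The main obstacle is the nonsmoothness of $F(\theta,\beta)=\max_i \beta_i v_i(\theta)$ in $\beta$: pointwise derivatives do not exist at tie points, so the Taylor expansion and linearization used above must be understood in an empirical-process sense rather than pointwise. To handle this, I would exploit that the class $\{F(\cdot,\beta) : \beta \in N(\betast)\}$ is Lipschitz in $\beta$ with envelope $\bar v$, hence VC-subgraph and Donsker with square-integrable envelope; this yields both (i) the bootstrap Donsker property required for $G^b \topw G$ and (ii) the shrinking-neighborhood stochastic equicontinuity needed to kill $R_t^b$. These two properties are exactly the ingredients of \citet{li2020numerical} for the numerical bootstrap consistency, so the final step is to verify their abstract conditions (twice differentiability of $H$, interior minimizer, and empirical process regularity) are met, then invoke their argmin theorem to transfer the $\Psi$-level convergence to convergence of the minimizers. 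Uniqueness of the minimizer of $\Psi^\infty$ (guaranteed by positive definiteness of $\cH$ from \nameref{as:twice_diff}) ensures the argmin map is continuous at $\Psi^\infty$, completing the proof.
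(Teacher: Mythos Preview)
Your proposal is correct and takes essentially the same approach as the paper: both reduce the claim to \citet{li2020numerical} by verifying that the EG objective satisfies their abstract conditions (interior minimizer, twice differentiability of $H$ at $\betast$, uniform manageability/Donsker property of the shrinking class, and the envelope/Lindeberg-type bounds), after which their argmin theorem delivers the conclusion. The paper's proof simply enumerates and checks conditions (i)--(xi) of their Theorem~4.1 directly rather than rederiving the localized decomposition $\Psi_t^b = A_t + B_t^b$ you sketch, but the underlying mechanism is identical; one small caution is that your line ``$\nabla H_t(\betagam)=0$'' should be read as $0\in\partial H_t(\betagam)$ since $H_t$ inherits the nonsmoothness of $F$, and the linear term in $A_t$ is controlled via $\nabla H(\betagam)=O_p(t^{-1/2})$ together with $\epsilon_t\sqrt t\to\infty$ rather than by exact vanishing.
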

The proof is in \cref{sec:proof:thm:nuboot_fm}.

Numerical bootstrap does not offer computational benefits, since it requires solving EG programs. However, as we see in \cref{sec:poorbuyerfppe} the idea of proximal bootstrap extends to a special case of FPPE where all buyers spend their budgets.
The regular multinomial bootstrap is recovered by setting $\ept = 1/\sqrt t$.

To describe proximal bootstrap, we define 
\begin{align*}
    G^b \defeq \sqrt t (\Pbt - \Pt) D_F(\cd, \betagam)
\end{align*}
and the numerical difference estimator of the Hessian matrix
$\hat \cH$, whose $(k,\ell)$-th entry is $\hat \cH_{k,\ell} = (\tdifft H_t)(\betagam)$, where 
    $(\tdiff g)(\cdot) = [
        g(\cdot+ \epsilon e_k + \epsilon e_\ell) 
        -g(\cdot- \epsilon e_k + \epsilon e_\ell)
        -g(\cdot+ \epsilon e_k - \epsilon e_\ell)
        +g(\cdot- \epsilon e_k - \epsilon e_\ell) ]/(4\epsilon\sq)$.
And $D_F(\cd, \betagam)$ is a deterministic element in $\partial F (\cd, \betagam)$.

The proximal bootstrap estimator is defined as 
\begin{align}
    \label{eq:defnuboot_lfm}
    \betab_{\mathsf{pr, LFM}} \defeq \argmin_{\beta \in \Rnp} \{ \epsilon_t (G^b) \tp (\beta - \betagam) + \frac12 \|\betagam - \beta\|_{\hat \cH} \sq  \}
\end{align}

\begin{restatable}{theorem}{prbootfm}
    \label{thm:prboot_fm}
    Let $\epsilon_t\sqrt{t}\to \infty$ and $\ept \downarrow 0$.
    Then 
    $\epsilon_t\inv(\betab_{\mathsf{pr, LFM}} - \betagam ) \topw \cJ_\LFM$. 
\end{restatable}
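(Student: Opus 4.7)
The strategy is to exploit the closed-form nature of the proximal bootstrap objective. Since the program in $\betab_{\mathsf{pr, LFM}}$ is an unconstrained-in-the-interior quadratic (the nonnegativity constraint $\beta \in \Rnp$ is inactive near $\betast$, which lies strictly inside $\Rnpp$ by the logarithmic term in $F$), the first step is to argue that with probability tending to one the constraint does not bind in the proximal program. Because $\betagam \toprob \betast \in \Rnpp$ and $\hat \cH \toprob \cH \succ 0$, the unconstrained minimizer of the quadratic lies within $o_p(1)$ of $\betagam$, hence in $\Rnpp$ eventually. Therefore, with probability tending to one,
\begin{align*}
    \epsilon_t\inv(\betab_{\mathsf{pr, LFM}} - \betagam) = -\hat \cH\inv G^\boot.
\end{align*}
The theorem then reduces to showing $-\hat \cH\inv G^\boot \topw -\cH\inv G$, where $G \sim \cN(0, \cov[\nabla F(\cd,\betast)])$; this limit equals $\cJ_\LFM$ in distribution (using symmetry of $G$ and the fact that $\E[\nabla F(\cd,\betast)] = 0$ by the first-order condition for $\betast$).

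For the Hessian piece, I would apply \cref{thm:hessian_est} with an appropriate differencing stepsize $\eta_t = o(1)$, $\eta_t \sqrt{t} \to \infty$, to obtain $\hat \cH \toprob \cH$ entrywise, and then $\hat \cH\inv \toprob \cH\inv$ by continuous mapping (using positive-definiteness of $\cH$ under \nameref{as:twice_diff}).

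For the bootstrap gradient $G^\boot = \sqrt{t}(\Pbt - \Pt) D_F(\cd,\betagam)$, I would decompose
\begin{align*}
    G^\boot = \sqrt{t}(\Pbt - \Pt) D_F(\cd,\betast) + \sqrt{t}(\Pbt - \Pt)\big[D_F(\cd,\betagam) - D_F(\cd,\betast)\big].
\end{align*}
The first term is a standard multinomial bootstrap central limit theorem applied to the fixed, bounded function $D_F(\cd,\betast)$: conditional on the data, it converges in distribution to $\cN(0, \cov[D_F(\cd,\betast)])$. By \nameref{as:twice_diff}, ties in $\max_i \beta_i^* v_i(\t)$ occur only on a measure-zero set of $\t$, so $D_F(\cd,\betast) = \nabla F(\cd,\betast)$ almost surely and the limit covariance is $\cov[\nabla F(\cd,\betast)]$. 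For the remainder, I would argue via stochastic equicontinuity of the bootstrap empirical process over the class $\{D_F(\cd,\beta) : \|\beta - \betast\| \le \delta\}$, relying on the Donsker (and hence bootstrap-Donsker) property established for EG subgradient classes in \citet{liao2023fisher,liao2023stat}, combined with $\betagam \toprob \betast$.

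The main obstacle is the second, remainder term: $D_F$ is a piecewise-constant selection from the subdifferential and is discontinuous in $\beta$ at tie points, so controlling $\sqrt{t}(\Pbt - \Pt)[D_F(\cd,\betagam) - D_F(\cd,\betast)]$ demands uniform control of a bootstrap empirical process over a class of discontinuous functions. Once this is in hand, Slutsky's lemma and the continuous mapping theorem deliver $-\hat \cH\inv G^\boot \topw -\cH\inv G$ with distribution $\cN(0, \cH\inv \cov[\nabla F(\cd,\betast)] \cH\inv) = \cJ_\LFM$, completing the proof.
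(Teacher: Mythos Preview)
Your proposal is correct and follows essentially the same route as the paper's proof: reduce to the unconstrained quadratic solution $-\hat\cH\inv G^\boot$ by arguing the nonnegativity constraint is eventually slack, decompose $G^\boot$ into a bootstrap CLT term at $\betast$ plus a stochastic-equicontinuity remainder (the paper dispatches this via \citet[Lemma 4.1]{wellner1996bootstrapping}), and combine with $\hat\cH\inv \toprob \cH\inv$ through continuous mapping. The only cosmetic difference is that the paper simply asserts $\hat\cH \toprob \cH$ whereas you explicitly invoke \cref{thm:hessian_est}, and the paper writes the limiting covariance as $\E[\nabla F(\cd,\betast)\nabla F(\cd,\betast)\tp]$ rather than $\cov[\nabla F(\cd,\betast)]$---as you note, these agree because $\nabla H(\betast)=0$.
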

The proof is in \cref{sec:proof:thm:prboot_fm}.

Proximal bootstrap is clearly computationally cheap since it only requires solving an unconstrained convex quadratic program (as opposed to the exponential cone program for EG). On the other hand, the numerical bootstrap requires estimation of the Hessian matrix. See \cref{thm:hessian_est} for a discussion of stepsize selection when using finite difference methods to estimate the Hessian.

\subsection{Examples of FPPE limit distributions}
\label{sec:detailsofexample}

\begin{example}[The case with $I_{==} = \emptyset$, \nameref{as:constraint_qualification} holds]
    \label{eg:scs_case}
    Suppose $|I| = k$.
    Assume $I_{==} = \emptyset$, $I_{=>} = \{1,\dots, k\}$. 
    Let $\tcH = \cH_{\Ic\Ic}$, a square matrix of size $(n-k)$ and $\tG = G_\Ic$.
    Then 
    \begin{align}
        \cJ_\FPPE = [0_{k \times 1};  - \tcH\inv \tG] 
    \end{align}
    which is the same as \cref{eq:fppe_with_scs}. This agrees with the result from \citet{liao2023stat}. 
\end{example}

\begin{example}[The case with $ |I_{==}| = 2$.] \label{eg:case_two_buyer}
    Let $I_{=>} = \emptyset$, $I_{==} = \{ 1, 2\}$ and $\Ic = \{3, \dots, n\}$.
    Then 
    \begin{equation} \label{eq:def_two_buyer_J}
        \cJ_\FPPE \defeq 
        \begin{cases}
            DZ = - \cH \inv G & \text{if $Z_1 < 0, Z_2 < 0$} 
            \\
            D \begin{bmatrix}
                0 \\
                Z_{ 2}-\rho_{12} Z_{ 1} \\
                \vdots \\
                Z_{ n}-\rho_{1 n} Z_{ 1}
            \end{bmatrix}  & \text{if $Z_1 \geq 0, Z_2 - \rho_{12}Z_1 < 0$}
            \\
            D \begin{bmatrix}
                Z_{ 1}-\rho_{21} Z_{ 2} \\
                0 \\
                Z_{ 3}-\rho_{23} Z_{ 2}  \\
                \vdots \\
                Z_{ n}-\rho_{2 n} Z_{ 2}
            \end{bmatrix}  & \text{if $Z_2 \geq 0, Z_1 - \rho_{21}Z_2 < 0$} 
            \\
            [0_{2\times 1}; - \tcH \inv \tG]& \text{o.w.}
        \end{cases}
       ,
    \end{equation}
    where $\tcH = \cH_{\Ic\Ic}$ and $\tG = G_\Ic$. 
    We present the derivation in \cref{sec:detailsofexample}.
\end{example}

\subsubsection*{Deriving Closed-Form Expression for $\cJ_\FPPE$}

We recall a few definitions regarding the constraints.
Let $I = \{ i: \betasti = 1\}$, $\Ic = [n] \setminus I$. We further partition $I$ into 
\begin{align*}
    I_{=>} = \{i: \betasti =1, \deltasti > 0\}, I_{==} = \{i : \betasti = 1, \deltasti= 0\} .
\end{align*}
Let $A$ (resp.\ $B$) be a matrix whose rows are $e_i\tp, i\in I_{=>}$ (resp.\ $i \in I_{==}$). So $A$ is a $|I_{=>}| \times n$ matrix and $B$ is $|I_{==}| \times n$. 


A combinatorial expression of $\cJ_\FPPE$ is available.
One can solve the quadratic program \cref{eq:fppe_asym_program}, which contains linear inequality constraints, by solving 
at most $2^{|I_{==}|}$ linearly constrained programs. 
First, one create a candidate linearly constrained program by turning some inequality constraints to be equality ones, and then record the 
optimal objective value. Then the smallest value out of all $2^{|I_{==}|}$ candidate programs must 
be the same as the original program.

Given $G$, let
$Q_j $ and $ h_j $ be the optimal value and the optimal solution to the program 
\begin{align}
    \min_{h \in \Rn}  (h + \cH\inv G) \tp \cH (h + \cH\inv G) 
    \label{eq:criticalprogram}
    \text{ s.t. } [A;B_j]h = 0.
\end{align}
Here $B_j$ consists of some (possibly zero) rows of $B$, $j = 1, \dots, 2^{|I_{==}|}$.

The    program \cref{eq:criticalprogram} is just projecting the vector $-\cH\inv G$ onto the linear subspace spanned by $\Gamma_j = [A;B_j]$ w.r.t.\ the norm $\|\cd\|_\cH$. With this geometric interpretation, it is easy to write down the solution.
Define the projection matrix $  P_j                                                                                               = I - \cH \inv \Gamma_j\tp (\Gamma_j \cH \inv \Gamma_j\tp)\inv \Gamma_j$.
Then the closed-form expressions for $Q_j$ and $h_j$ are
\begin{align*}
     Q_j & = \| (I-P_j) \cH\inv G\|_{\cH}\sq = (\cH  \inv G)\tp \Gamma_j\tp (\Gamma_j \cH \inv \Gamma_j \tp )\inv \Gamma_j (\cH \inv G)
    \\
    h_j & = - P_j\cH \inv G,
\end{align*}
Then it is obvious that 
\begin{align}
    \cJ_\FPPE \defeq h_{j(G)} = - P_{j(G)}\cH \inv G
\end{align}
where $j(G) = \argmin _j \{Q_j : B h_j \leq 0 \}$.
Equivalently,
\begin{align}\label{eq:def_gen_J}
    & \cJ_\FPPE = 
    \\
    & \sum_{j=1}^{2^{|I_{==}|}}  -  \bigg(\indi(B h_j \leq 0) \prod_{\ell = 1}^{2^{|I_{==}|}} \indi ( Q_j \leq Q_\ell \text{ or } B h_\ell \not\leq 0)\bigg)P_j\cH \inv G
\end{align}
a random vector of length $n$. Only one of the term will be selected for each realization of $G$.
The representation allows us to derive the exact distribution in some cases.

\begin{proof}[Omitted details in \cref{eg:case_two_buyer}]
    We show $\cJ$ in \cref{eq:def_gen_J} reduces to the claimed expression \cref{eq:def_two_buyer_J}.

    Consider the programs
    \begin{align} 
        & \min_{h \in \Rn}  (h + \cH\inv G) \tp \cH (h + \cH\inv G)  
        \notag
            \\
        & \quad \text{subject to no constraints} \tag{$Q_1$}
            \\
        & \text{or subject to $h \tp e_1 = 0$} \tag{$Q_2$}
            \\
        & \text{or subject to $h \tp e_2 = 0$} \tag{$Q_3$}
            \\
        & \text{or subject to $h \tp e_1 = 0$, $h \tp e_2 = 0$} \tag{$Q_4$}
        \end{align} 
    
    For $Q_1$ the optimal solution is $h_1 = -\cH\inv G$.

    For $Q_2$, the optimal value is $ Q_2 = (\cH  \inv G)\tp e_1 (e_1\tp \cH \inv e_1 )\inv e_1\tp (\cH \inv G) = (G\tp \cH\inv e_1)\sq / (\cH\inv)_{11}\sq = Z_1\sq$ and the optimal solution $h_2$ is
    \begin{align*}
        h_2 
        & = - [I - \cH\inv e_1 (e_1\tp \cH\inv e_1) \inv e_t\tp] \cH\inv G
        \\
        & = \begin{bmatrix}
            0 & & & 
            \\
            \frac{(\cH\inv)_{21} }{ (\cH\inv)_{11}} & -1 & & 
            \\
            \vdots &  & \ddots & 
            \\
            \frac{(\cH\inv)_{n1} }{ (\cH\inv)_{11}} &  & & -1 
        \end{bmatrix} \cH\inv G
        \\
        & = D\begin{bmatrix}
            0 \\
             Z_{ 2} - \rho_{12} Z_{ 1} \\
            \vdots \\
            Z_{ n} -  \rho_{1 n} Z_{ 1}
        \end{bmatrix} 
    \end{align*}
    where we recall $Z = - D\inv \cH\inv G = [Z_1,\dots, Z_n]\tp $.
    For $Q_3$, the optimal value is $Z_2\sq$ and the optimal solution $h_3$ is the third display in \cref{eq:def_two_buyer_J}.

    For $Q_4$, the optimal solution $h_4$ is the fourth display in \cref{eq:def_two_buyer_J}.

    We consider the indicator part for $j = 2$ in \cref{eq:def_gen_J}, i.e., the expression
    \begin{align*}
        \indi(B h_2 \leq 0) \prod_{\ell = 1}^{4} \indi ( Q_2 \leq Q_\ell \text{ or } B h_\ell \not\leq 0)
    \end{align*}
    Then $B    = [e_1\tp; e_2\tp]$.
    Note $Bh_1 = [ Z_1 / \sqrt{(\cH\inv)_{11}}, Z_2 / \sqrt{(\cH\inv)_{22}} ]\tp$, $Bh_2 = [0,Z_2 - \rho_{12} Z_1]\tp$, $Bh_3 = [Z_1 - \rho_{12} Z_2, 0]\tp$.  Obviously, both $Q_2, Q_3 \geq Q_1$, and both $Q_2, Q_3 \leq Q_4$.
    It can be shown 
    \begin{align*}
        & \indi(B h_2 \leq 0) \indi (Q_1 \geq Q_2 \text{ or } B h_1 \not \leq 0)
        \indi (Q_3 \geq Q_2 \text{ or } B h_3\not \leq 0)
        \indi (Q_4 \geq Q_2 \text{ or } B h_4 \not \leq 0)
        \\
        & = 
        \indi(B h_2 \leq 0) \indi (B h_1 \not \leq  0)
        \indi (Q_3 \geq Q_2 \text{ or } B h_3\not \leq 0)
        \times 1
        \\
       & = \indi(Z_2 - \rho_{12} Z_1 \leq 0) \indi (Z_1 \geq  0) \indi (Z_2\sq \geq Z_1\sq \text{ or } Z_1 - \rho_{12} Z_2 > 0) \text{\quad almost surely}
        \\
       & = \indi ( Z_2 - \rho_{12} Z_1 \leq 0) \indi (Z_1\geq 0) \text{\quad almost surely}
        ,
    \end{align*}
    where the last equality follows by a case-by-case analysis.
    The indicator parts for $j = 1, 3, 4$ are analyzed similarly.
\end{proof}

\section{Experiments}
\label{sec:exp}

\subsection{Simulation: Verify Bootstrap Consistency for FPPE}
\label{sec:exp_sim}

In this section we verify the consistency of our bootstrap estimators, and investigate the effect of the bootstrap stepsize $\dtt$ (in \cref{eq:adboot_fppe}) on the quality of bootstrap approximation in FPPE on fully synthetic data.

We consider an $8$-buyer FPPE instance with 100 items sampled with i.i.d.\ values. Budgets of buyers are selected so that the first three buyers are unpaced ($\b=1$). This is to model the fact that in reality there could be buyers with leftover budgets.
We use dual averaging \citep{xiao2010dual,gao2020first,liao2022dualaveraging} to compute the limit FPPE \pmr  $\betast$. 
Finite FPPEs are computed with MOSEK.
We draw 100 finite FPPEs and obtain the finite FPPE distribution by plotting the histogram of $\sqrt{t} (\betagam - \betast)$. We call this \emph{true resampling}, which would not be possible in practice.
Finally, we then generate \emph{a single} FPPE and resample 100 bootstrapped $\beta$'s according to \cref{eq:adboot_fppe}, obtaining the bootstrap distribution estimate.
To experiment with different tail behaviors for values, we run three sets of experiments: uniform, exponential and truncated normal values. 
We also vary the choice of bootstrap stepsize $\epsilon_t = t^{-d}$. 

\textit{Results} In \cref{fig:sim1,fig:sim2,fig:sim3} we present the finite-sample distribution of $\betagam$ and $\betab$. 
Each column corresponds to the \pmr of a buyer,
and each row corresponds to a choice of $d$.
First, we observe that with a suitable choice of $d$, the bootstrap distribution is a good approximation to that of finite FPPE with true resampling.
For buyers with $\betasti = 1$ the proposed bootstrap is able to correctly identify them. 
For buyers with $\betasti < 1$, bootstrap correctly captures the range and the shape of the distribution.
This result is significant, because to obtain the distributions of FPPE, we need to observe multiple market equilibria, to which we usually do not have access.
The bootstrap distribution, on the other hand, is generated based on just one finite FPPE.
Second, we also observe that if $\epsilon_t$ is too large ($d$ too small), the quality of approximation degrades. In particular, in the bottom rows of plots for uniform and normal values, bootstrap tends to ignore the right part of the distribution of finite FPPEs.

\begin{figure}[ht]
    \centering
    \includegraphics[scale= 0.5]{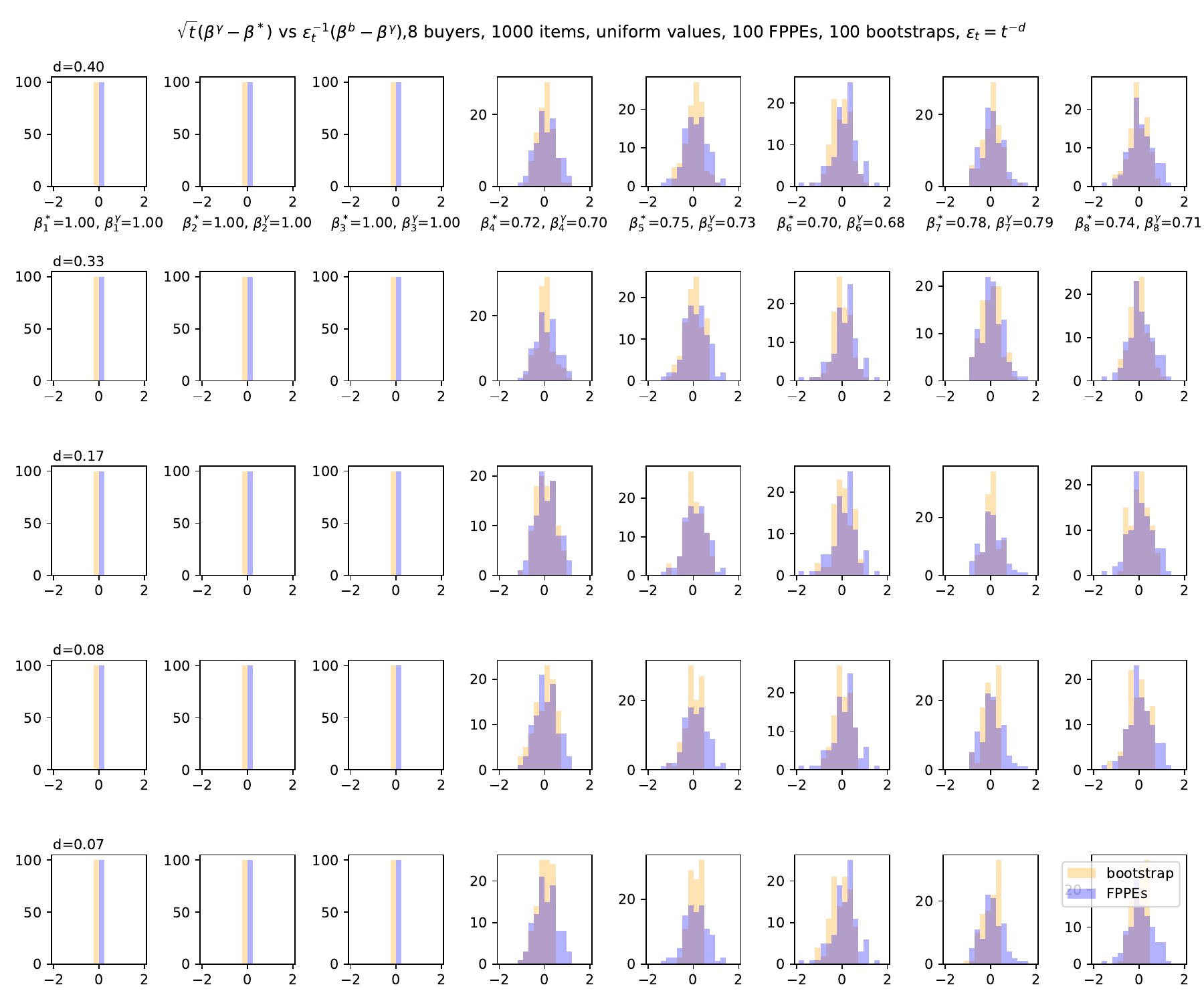}
    \caption{Comparison of Bootstrap and FPPE finite item distribution.}
    \label{fig:sim1}
\end{figure}
\begin{figure}[ht]
    \centering
    \includegraphics[scale = 0.5]{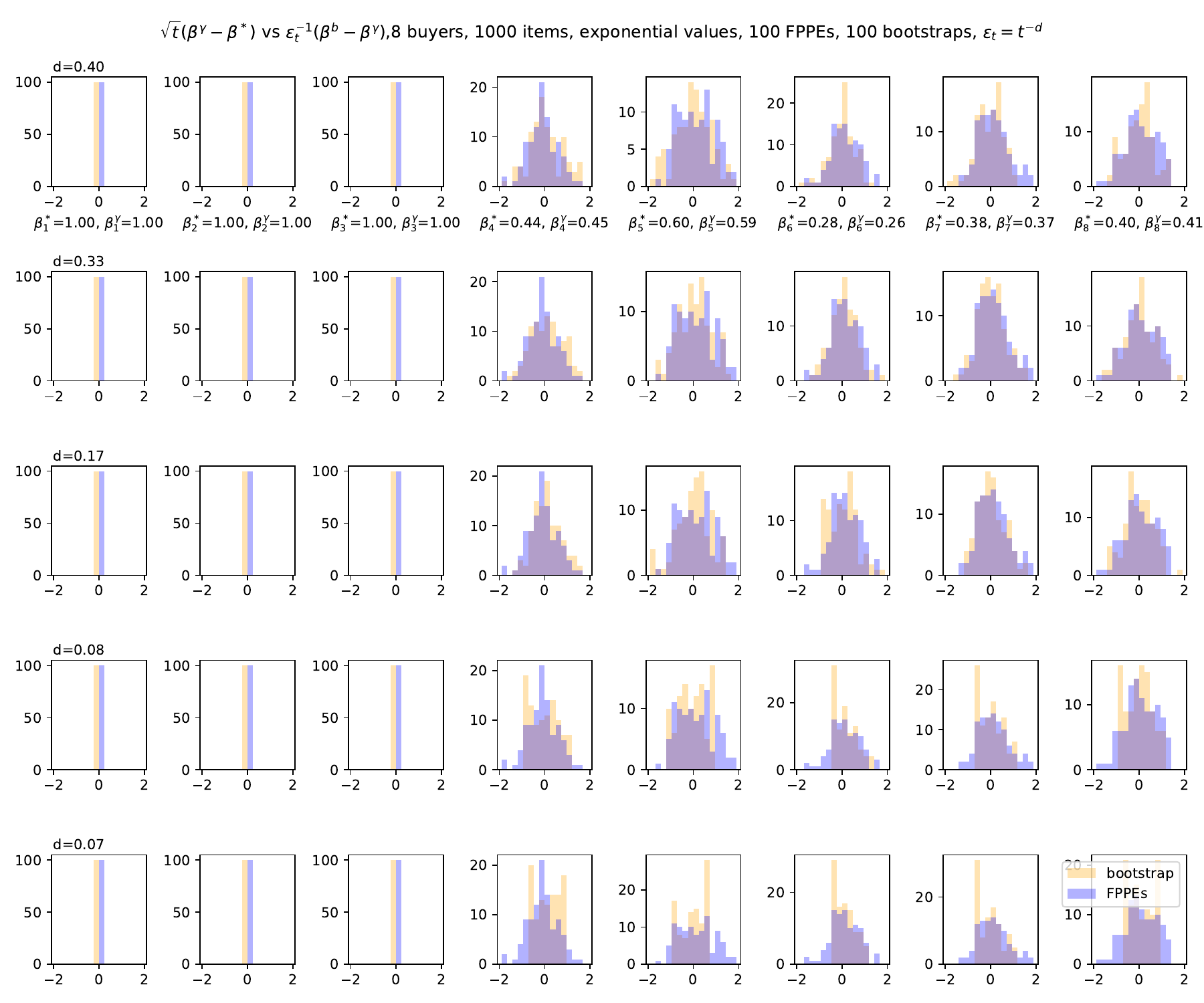}
    \caption{Comparison of Bootstrap and FPPE finite item distribution.}
    \label{fig:sim2}
\end{figure}
\begin{figure}[ht]
    \centering
    \includegraphics[scale = 0.5]{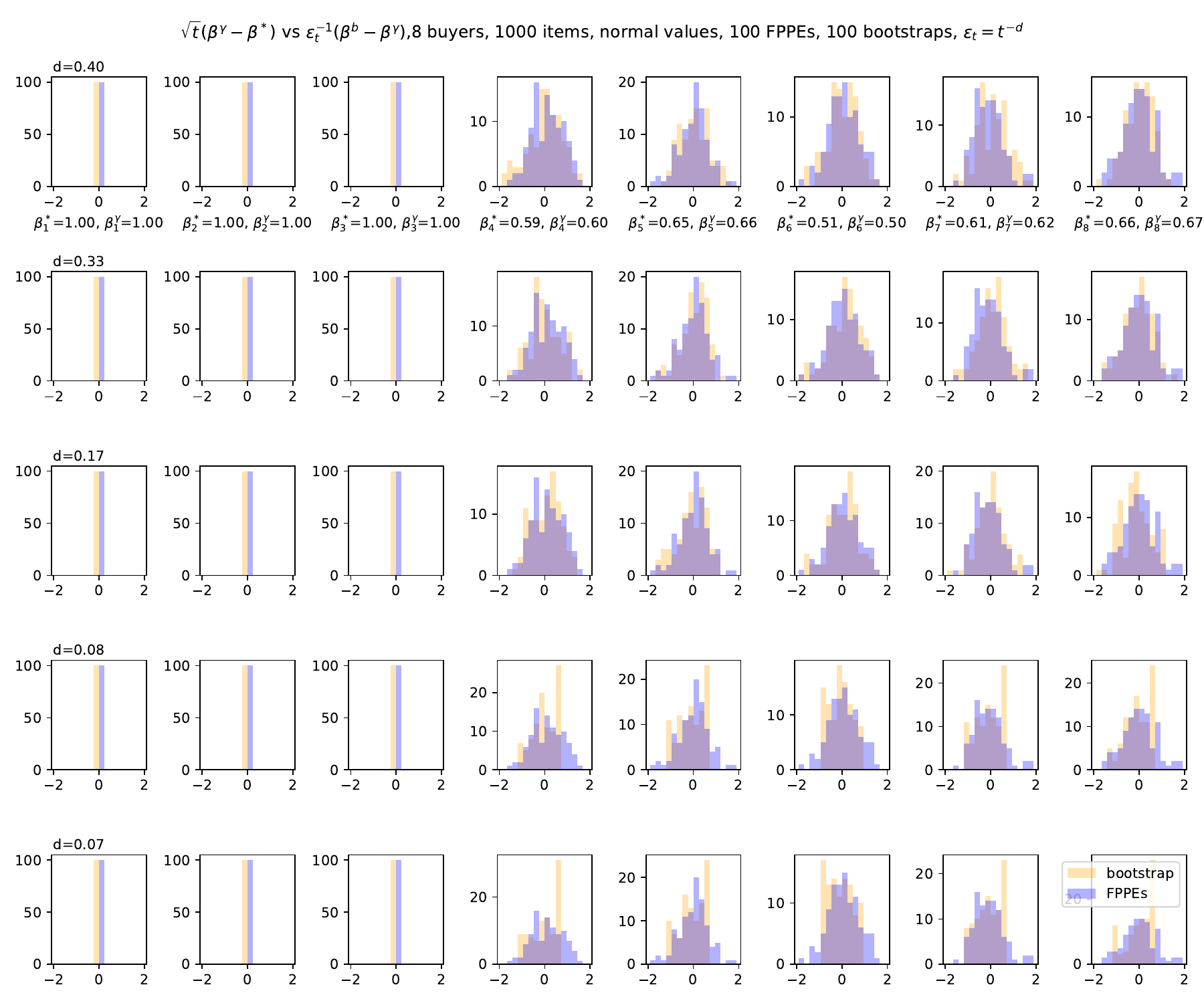}
    \caption{Comparison of Bootstrap and FPPE finite item distribution.}
    \label{fig:sim3}
\end{figure}

\subsection{Semi-Real Data}
\label{sec:exp_ipinyou}
In this section we apply our bootstrap estimator to a real-world dataset, the iPinYou dataset \citep{liao2014ipinyou}.

\textit{The data.} The iPinYou dataset \citep{liao2014ipinyou} contains raw log data of the bid, impression, click, and conversion history on the iPinYou platform
in the weeks of 
March 11--17, 
June 8--15
and October 19--27.
We use the impression and click data of 5 advertisers on June 6, 2013, containing a total of 1.8 million impressions and 1,200 clicks.
As in the main text, let $i \in \{1,2,3,4,5\}$ index advertisers (buyers) and let $\tau$ index impressions/users (items in FPPE terminology).
The five advertiser are labeled by number and their categories are revealed: 1459 (Chinese e-commerce), 3358 (software), 3386 (international e-commerce) and 3476 (tire).
From the raw log data, the following dataset can be extracted.
The response variable is a binary variable $ \click_i^\tau \in \{0,1\}$ that indicates whether the user clicked the ad or not. 
The relevant predictors include 
a categorical variable $\AE$ of three levels that records from which ad-exchange the impression 
was generated, a categorical variable $\RG$ of 35 levels indicating provinces of user IPs, and finally 44 boolean variables, $\TG$'s, indicating whether a user belongs to certain user groups  defined based on 
demographic, geographic and other information.
We select the top-10 most frequent user tags and denote them by $\TG_1,\dots,\TG_{10} \in \{0,1\}$. 
Both $\AE$ and $\TG$ are masked in the dataset, and we do not know their real-world meanings.

\textit{Simulate advertisers with logistic regression.}
The raw data contains only five advertisers. In order to simulate more realistic advertiser values, we fit a logistic regression and then perturb the fitted coefficients to generate more advertisers.
We posit the following logistic regression model for click-through rates (CTRs).
For a user $\tau$ that saw the ad of advertiser $i$, the click process is governed by
\begin{align*}
   & \CTR_i^\tau = \P(\click_i^\tau = 1 \given \thetau) = 
    \frac{1}{1+\exp (w_i\tp \theta^\tau)} 
   \\
   &\theta^\tau = [1, \AE_2, \AE_3, \RG_2,\dots, \RG_{35}, \TG_1, \dots, \TG_{10}]\in \{1\} \times \{0,1\}^{46}
\end{align*}
where the weight vectors $w_i \in \R^{47}$ are the coefficients to be estimated from the data.
Note that $\AE_1$ and $\RG_1$ are absorbed in the intercept.
By running 5 logistic regressions, we obtain regression coefficients $w_1, w_2, \dots, w_5$.
To visualize the fitted regression, in \cref{fig:real} we show the estimated click-through rate distributions of the five advertisers. The diagonal plots are the histogram of CTRs, and the off-diagonal panels are the pair-wise scatter plots of CTRs.
To generate more advertisers, we take a convex combination of the coefficients $w_i$'s, add uniform noise, and obtain a new parameter, say $w'$. Given an item, the CTR of the newly generated advertisers will be $\frac{1}{1 + \exp(\theta \tp w')}$. 
The limit value distribution in \cref{def:limit_fppe} is the historical distribution of the simulated advertisers' predicted CTRs of the 1.8 million impressions.

\textit{Experiment setup.}
In this section we aim to produce confidence interval of the sum $\sumi \betasti$ with 
the bootstrap estimator \cref{eq:adboot_fppe}.
Firstly, the sum equals $n$ times the average price-per-utility of advertisers, a measure of efficiency of the system. Secondly, since most quantities in FPPE, such as revenue and social welfare, are smooth functions of \pmrs, being able to perform inference about a linear combination of $\beta$'s indicates the ability to infer first-order estimates of those quantities.

The estimator requires an initial consistent estimate of the Hessian matrix, which is implemented with finite difference in \cref{eq:def_Gb_H} with differencing stepsize $\eps = t ^{-0.4}$.
The estimator also requires a bootstrap stepsize $\eps_t = t^{-d}$. We try $d$ over the grid $\{ 0.4, 0.3, 0.2, 0.1, 0.05 \}$.

An experiment has parameters $(t,n, d, \alpha)$.
Here $t \in \{ 100, 300, 500 \}$ is the number of items and $n \in \{ 10, 20, 30, 50 \}$ the number of advertisers.
Parameter $d$ is the exponent of the bootstrap stepsize, and $\alpha \in \{0.1, 0.3, 0.5 \}$ is the proportion of advertisers that are not budget-constrained (i.e., $\beta = 1$).
To control $\alpha$ in the experiments, we select budgets as follows. Give infinite budgets to the first $\lfloor \alpha n \rfloor $ advertisers. Initialize the rest of the advertisers' budgets randomly, and keep decreasing their budgets until their \pmrs are strictly less than 1.
For the experiment $(t,n, d, \alpha)$, we first compute the \pmr in the limit market using dual averaging \citep{xiao2010dual,gao2021online,liao2022dualaveraging}.
In one simulation of the experiment $(t,n, d, \alpha)$,
we sample one FPPE by drawing values from the limit value distribution.
Now given one FPPE, we generate bootstrapped \pmrs $\{ \beta^{b,1}, \dots, \beta^{b,B}\}$ by \cref{eq:adboot_fppe}. We calculate the set of sums $S =\{ s^{b,1},\dots, s^{b,B}\}$ where $s^{b,1} = \sumi \beta^{b,1}_i$ and so on.
To obtain a confidence interval with nominal coverage $95\%$,
we let $\ell, u$ be the 2.5\% and 97.5\% percentiles of $S$. We report the coverage rate and the width of $[\ell, u]$ in \cref{tbl:bsipinyou}. We perform $B = 100$ bootstrap replications in each simulation. The reported coverage rate for an experiment with parameters $ (t,n, d, \alpha)$ is averaged over 100 simulations.

\textit{Results.}
For an appropriate choice of $d \in [0.2, 0.3]$, the finite-sample coverage rate agrees with the nominal coverage 95\%.
Although our theory suggests that as long as $d < 1/2$, the bootstrapped distribution is asymptotically consistent, parameter~$d$ does affect finite-sample coverage.
Too small a $d$ (for example, 0.10 or 0.05) results in over-coverage and a large $d$ results in under-coverage.
We also observe that for $d = 0.4$ and $n=50$, the finite-sample coverage rate
is undesirable for item size $t=100$. 
Reassuringly, it
increases to a nominal coverage of 95\% as item size increases.
We also see that the width of the confidence interval decreases as 
the number of items increases while maintaining nominal coverage. This is expected since the interval width decreases at a rate of $1/\sqrt{t}$.
Finally, for appropriately chosen $d$ and item size $t$, the proportion of unpaced advertisers $\alpha$ does not affect finite-sample coverage rates, which demonstrates the robustness of the proposed bootstrap estimator.
\begin{landscape}
    \begin{table}[ht]
\centering
\caption{Coverage and width of CI in parentheses. $\alpha$ = proportion of $\beta_i = 1$, $d$ is the exponent in bootstrap stepsize $\epsilon_t = t^{-d}$.}
\label{tbl:bsipinyou}
\begin{tabular}{llllllllllllll}
\toprule
     & $\alpha$ & \multicolumn{4}{l}{0.1} & \multicolumn{4}{l}{0.3} & \multicolumn{4}{l}{0.5} \\
     & buyer &                       10 &                       20 &                       30 &                        50 &                       10 &                       20 &                       30 &                        50 &                       10 &                       20 &                       30 &                       50 \\
\midrule $d$ & item &                          &                          &                          &                           &                          &                          &                          &                           &                          &                          &                          &                          \\
\midrule
\multirow{3}{*}{0.40} & 100 &  \makecell{0.79\\(1.07)} &  \makecell{0.82\\(1.94)} &  \makecell{0.81\\(3.02)} &   \makecell{0.88\\(5.17)} &  \makecell{0.82\\(1.20)} &  \makecell{0.84\\(2.02)} &  \makecell{0.82\\(3.47)} &    \makecell{0.3\\(4.53)} &   \makecell{0.7\\(1.11)} &  \makecell{0.91\\(2.18)} &  \makecell{0.86\\(2.67)} &  \makecell{0.36\\(3.44)} \\
     & 300 &  \makecell{0.73\\(0.67)} &  \makecell{0.82\\(1.14)} &  \makecell{0.85\\(1.67)} &   \makecell{0.85\\(2.74)} &  \makecell{0.89\\(0.83)} &  \makecell{0.76\\(1.21)} &   \makecell{0.9\\(2.24)} &   \makecell{0.53\\(3.17)} &  \makecell{0.82\\(0.74)} &  \makecell{0.85\\(1.51)} &  \makecell{0.95\\(2.04)} &  \makecell{0.49\\(2.65)} \\
     & 500 &  \makecell{0.78\\(0.55)} &  \makecell{0.83\\(0.92)} &  \makecell{0.79\\(1.34)} &   \makecell{0.89\\(2.13)} &  \makecell{0.88\\(0.68)} &  \makecell{0.83\\(0.97)} &  \makecell{0.88\\(1.87)} &   \makecell{0.83\\(2.78)} &   \makecell{0.8\\(0.60)} &  \makecell{0.85\\(1.28)} &  \makecell{0.91\\(1.69)} &  \makecell{0.81\\(2.39)} \\
\cline{1-14}
\multirow{3}{*}{0.30} & 100 &  \makecell{0.94\\(1.68)} &  \makecell{0.94\\(3.05)} &  \makecell{0.98\\(4.53)} &   \makecell{0.97\\(7.75)} &  \makecell{0.94\\(1.81)} &  \makecell{0.93\\(3.01)} &  \makecell{0.97\\(4.95)} &    \makecell{0.8\\(6.20)} &  \makecell{0.85\\(1.67)} &  \makecell{0.95\\(3.16)} &  \makecell{0.98\\(4.00)} &  \makecell{0.77\\(4.89)} \\
     & 300 &  \makecell{0.96\\(1.21)} &  \makecell{0.96\\(2.01)} &  \makecell{0.97\\(2.89)} &   \makecell{0.97\\(4.69)} &  \makecell{0.96\\(1.40)} &  \makecell{0.93\\(2.08)} &  \makecell{0.99\\(3.76)} &   \makecell{0.99\\(5.08)} &  \makecell{0.91\\(1.28)} &  \makecell{0.91\\(2.39)} &  \makecell{0.99\\(3.27)} &  \makecell{0.93\\(4.32)} \\
     & 500 &  \makecell{0.95\\(1.03)} &  \makecell{0.97\\(1.71)} &  \makecell{0.98\\(2.40)} &    \makecell{1.0\\(3.91)} &  \makecell{0.98\\(1.24)} &  \makecell{0.98\\(1.73)} &  \makecell{0.99\\(3.24)} &    \makecell{1.0\\(4.69)} &  \makecell{0.93\\(1.10)} &  \makecell{0.96\\(2.20)} &  \makecell{0.98\\(2.90)} &  \makecell{0.99\\(4.09)} \\
\cline{1-14}
\multirow{3}{*}{0.20} & 100 &  \makecell{0.99\\(2.39)} &  \makecell{0.99\\(4.48)} &   \makecell{1.0\\(6.45)} &  \makecell{0.98\\(10.86)} &  \makecell{0.99\\(2.63)} &   \makecell{1.0\\(4.41)} &  \makecell{0.99\\(6.51)} &   \makecell{0.96\\(8.26)} &  \makecell{0.96\\(2.41)} &   \makecell{1.0\\(4.39)} &   \makecell{1.0\\(4.94)} &  \makecell{0.96\\(6.43)} \\
     & 300 &   \makecell{1.0\\(1.97)} &  \makecell{0.99\\(3.33)} &  \makecell{0.99\\(4.77)} &    \makecell{1.0\\(7.63)} &  \makecell{0.99\\(2.19)} &   \makecell{1.0\\(3.40)} &   \makecell{1.0\\(5.54)} &    \makecell{1.0\\(7.36)} &  \makecell{0.96\\(2.11)} &  \makecell{0.98\\(3.65)} &  \makecell{0.99\\(4.70)} &   \makecell{1.0\\(6.01)} \\
     & 500 &   \makecell{1.0\\(1.80)} &  \makecell{0.99\\(3.05)} &  \makecell{0.99\\(4.13)} &    \makecell{1.0\\(6.57)} &   \makecell{1.0\\(2.09)} &  \makecell{0.99\\(3.03)} &   \makecell{1.0\\(5.14)} &    \makecell{1.0\\(7.04)} &  \makecell{0.99\\(1.90)} &  \makecell{0.95\\(3.57)} &  \makecell{0.99\\(4.55)} &   \makecell{1.0\\(5.95)} \\
\cline{1-14}
\multirow{3}{*}{0.10} & 100 &   \makecell{1.0\\(3.36)} &   \makecell{1.0\\(5.99)} &   \makecell{1.0\\(8.56)} &   \makecell{1.0\\(13.93)} &   \makecell{1.0\\(3.44)} &   \makecell{1.0\\(5.97)} &   \makecell{1.0\\(8.04)} &   \makecell{1.0\\(10.07)} &  \makecell{0.97\\(3.24)} &  \makecell{0.99\\(5.30)} &  \makecell{0.99\\(6.18)} &  \makecell{0.98\\(7.52)} \\
     & 300 &   \makecell{1.0\\(2.98)} &   \makecell{1.0\\(5.01)} &   \makecell{1.0\\(6.89)} &   \makecell{1.0\\(10.81)} &   \makecell{1.0\\(3.20)} &   \makecell{1.0\\(4.83)} &   \makecell{1.0\\(7.29)} &    \makecell{1.0\\(9.09)} &   \makecell{1.0\\(3.02)} &  \makecell{0.94\\(4.79)} &  \makecell{0.92\\(6.04)} &   \makecell{1.0\\(7.67)} \\
     & 500 &   \makecell{1.0\\(2.87)} &   \makecell{1.0\\(4.50)} &   \makecell{1.0\\(6.41)} &    \makecell{1.0\\(9.86)} &   \makecell{1.0\\(3.09)} &   \makecell{1.0\\(4.60)} &   \makecell{1.0\\(6.94)} &    \makecell{1.0\\(9.39)} &  \makecell{0.99\\(2.85)} &  \makecell{0.98\\(4.80)} &  \makecell{0.93\\(6.02)} &  \makecell{0.94\\(7.52)} \\
\cline{1-14}
\multirow{3}{*}{0.05} & 100 &   \makecell{1.0\\(3.70)} &   \makecell{1.0\\(6.73)} &   \makecell{1.0\\(9.61)} &   \makecell{1.0\\(15.67)} &   \makecell{1.0\\(3.82)} &  \makecell{0.99\\(6.59)} &   \makecell{1.0\\(8.46)} &   \makecell{1.0\\(10.66)} &  \makecell{0.98\\(3.67)} &  \makecell{0.95\\(5.59)} &  \makecell{0.93\\(6.47)} &   \makecell{1.0\\(8.10)} \\
     & 300 &   \makecell{1.0\\(3.47)} &   \makecell{1.0\\(5.86)} &   \makecell{1.0\\(8.06)} &   \makecell{1.0\\(12.21)} &   \makecell{1.0\\(3.57)} &   \makecell{1.0\\(5.72)} &   \makecell{1.0\\(8.06)} &  \makecell{0.99\\(10.36)} &   \makecell{1.0\\(3.34)} &  \makecell{0.97\\(5.48)} &   \makecell{0.9\\(6.45)} &  \makecell{0.94\\(8.22)} \\
     & 500 &   \makecell{1.0\\(3.35)} &   \makecell{1.0\\(5.45)} &   \makecell{1.0\\(7.38)} &   \makecell{1.0\\(11.32)} &   \makecell{1.0\\(3.61)} &   \makecell{1.0\\(5.43)} &   \makecell{1.0\\(7.74)} &    \makecell{1.0\\(9.93)} &  \makecell{0.99\\(3.33)} &  \makecell{0.97\\(5.24)} &  \makecell{0.93\\(6.50)} &  \makecell{0.98\\(8.13)} \\
\bottomrule
\end{tabular}
\end{table}|

\end{landscape}

\begin{figure}
    \centering
    \includegraphics[scale = 0.6]{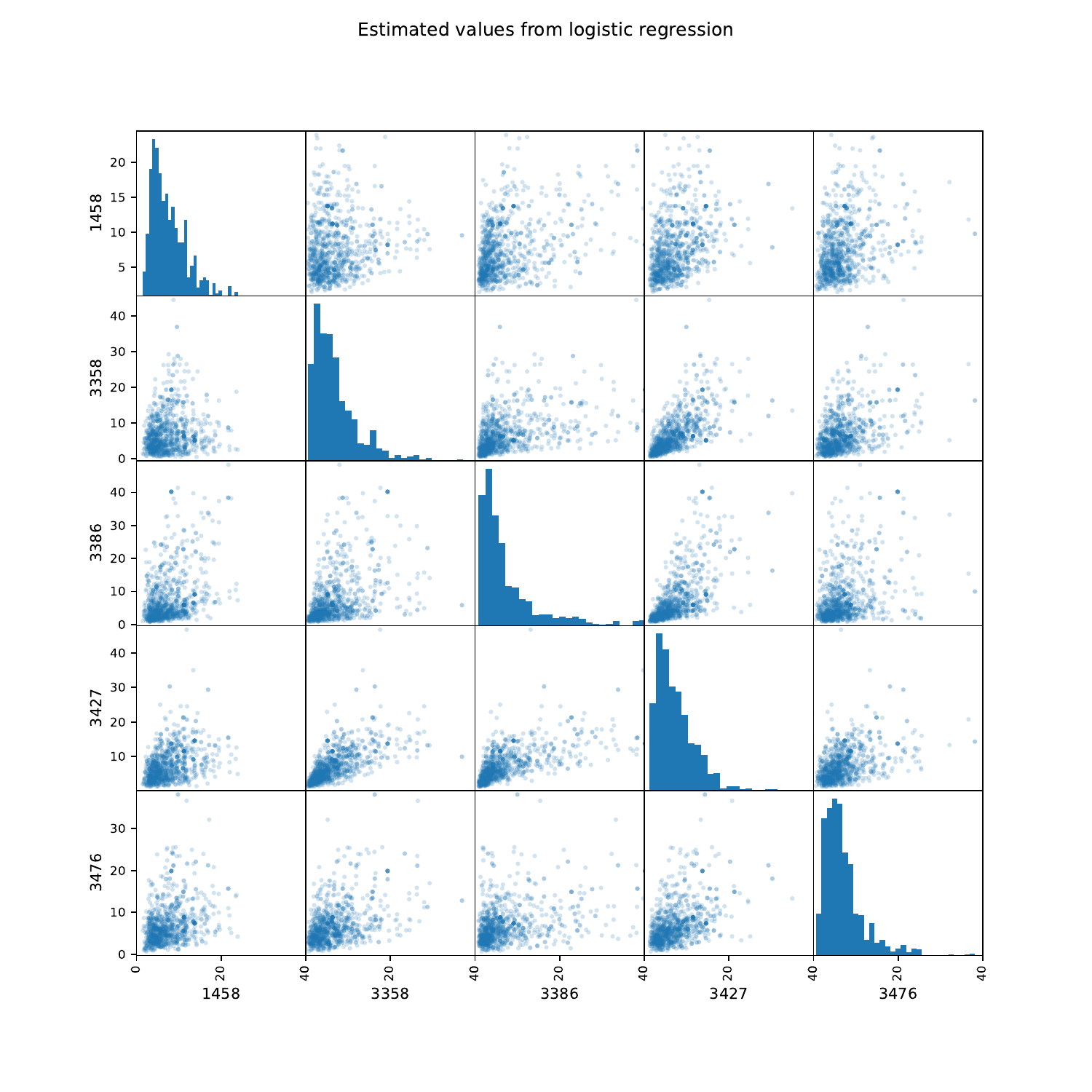}
    \caption{Click-through rate (in 0.01\%) distributions from logistic regression. }
    \label{fig:real}
\end{figure}

\section{Review of Weak Epi-Convergence}

The Hoffman-J\o rgensen weak convergence theory is a powerful tool to study the asymptotics convergence of statistical functional, especially the argmin functional.
To apply the theory to specific applications, one needs to instantiate it with a metric space, usually a space of functions, verify continuity or some form of differentiability of the statistical functional and the weak convergence of certain processes, and then finally invoke continuous mapping theorem or functional delta method. 
Common choices of metric spaces include the space of bounded functions (on some metric space) with the uniform metric \citep{van2000asymptotic,kosorok2008introduction,billingsley2013convergence}, 
the space of locally bounded functions on $\Rn$ with the topology of uniform convergence on compacta \citep{kim1990cube} or the topology of induced by the hypi-semimetric \citep{bucher2014uniform},
the space of positive signed measures or point measures on the real line with an appropriate metric \citep{resnick2008extreme},
the space of bounded real-valued continuous functions on $\Rn$ with the usual sup-norm \citep{gine2008uniform},
or the space of cadlag (left limit right continuous) functions on $\R$ with the Skorohod $J_1$ metric \citep{skorokhod1956limit,pollard2012convergence}.
To study asymptotics of constrained minimizers, a suitable choice of metric space is the space of 
extended real-valued lower semi-continuous functions with  the metric that induces the topology of epi-convergence. Such an approach dates back to \citet{geyer1994asymptotics} and \citet{molchanov2005theory}, and
used in \citet{chernozhukov2004likelihood} for nonregular models and  \citet{chernozhukov2007estimation} for set estimation,
and
more recently is used by \citet{parker2019asymptotic} for constrained quantile regression, and \citet{li2020numerical} and \citet{li2023proximal} in the context of bootstrap.

To begin with, we introduce the concept of epi-convergence and its probabilistic extensions.
Consider
\begin{align*}
    & \cL_n = \{ f:\Rn \to \bar \R: f \text{ is proper lower semi-continuous (lsc)} \} ,
    \\
    &  \cC\cS_n = \{ A : A\text{ is a nonempty closed set in }  \Rn\} .
\end{align*}
For $f:\Rn\to\bar \R$, we let $\epi f \defeq \{ (x,v) \in \R^{n+1} : f(x) \leq v \}$ be its epi-graph. Also for $C$ a nonempty closed subset of $\R^{n+1}$, and a point $v \in \R^{n+1}$, let $d_C(v) \defeq \inf\{\|u-v\|_2: u\in C \}$ be the distance of $v$ to the set $C$, 
and for nonempty sets $A$ and $B$, define $d_\rho (A, B) \defeq \max \left\{\left|d_{A}(v)-d_{B}(v)\right|:\|v \|_2 \leq \rho\right\}$.
Define the Attouch-Wets metric on $\cC\cS_{n+1}$ by
\begin{align}
    \label{eq:defdaw}
    d_{\text{AW}} (A, B ) \defeq  \int_0^{\infty} d_\rho(A , B)  \exp (-\rho) \diff \rho
\end{align}
And for $f,g \in \cL_n$, define the metric 
$d_\epi (f,g) \defeq d_\text{AW}(\epi f, \epi g)$.
In $\cC\cS_n$, the topology induced by $d_{\text{AW}}$ is equivalent to 
Wijsman topology and the topology of Painlev\'e-Kuratowski set convergence \citep{romisch2004delta}.
The metric space $(\cC\cS_n, d_\text{AW})$ is complete and separable \citep[Theorem 4.42, Proposition 4.45]{rockafellar2009variational}.
Also, the metric space $(\cL_n, d_\epi )$ is complete and separable \citep[Theorem 7.58]{rockafellar2009variational}. We say a sequence of functions $f_t \in \cL_n$ epi-converges to $f\in \cL_n$ if $d_\epi (f_t, f) \to 0$.

\begin{defn}[Epi-convergence in probability]

Let $Z_t: \Omega\times \Rn \to \bar \R$ and $Z:\Omega\times \Rn \to \bar \R$ be random lsc, extended real-valued functions. 
We write $Z_t \toepi Z$ in probability if for any $\epsilon > 0$ it holds $\P( \omega : d_\epi(Z_t(\omega, \cd),Z(\omega, \cd)) > \epsilon) \to 0$ as $t\to \infty$.

\end{defn}



\begin{defn}[Epi-convergence in distribution, \citet{knight1999epi}]
    We say $Z_t \todepi Z$ if for any closed rectangles $R_1, \dots, R_k$ with open interiors $R_1^o, \dots, R_k^o$, it holds 
    the random vector $(\inf_{R_j} Z_t(\cd), j = 1,\dots, k) \tod (\inf_{R_j} Z(\cd), j = 1,\dots, k) $ and 
    $(\inf_{R_j^o} Z_t(\cd), j = 1,\dots, k) \tod (\inf_{R_j^o} Z(\cd), j = 1,\dots, k) $, or equivalently, for any real numbers $a_1,\dots, a_k$,  
\begin{align*}
    \P & \left(\inf _{u \in R_1} Z(u)>a_1, \cdots, \inf _{u \in R_k} Z(u)>a_k\right) \\
    & \leq \liminf _{t \rightarrow \infty} \P\left(\inf _{u \in R_1} Z_t(u)>a_1, \cdots, \inf _{u \in R_k} Z_t(u)>a_k\right) \\
    & \leq \limsup _{t \rightarrow \infty} \P\left(\inf _{u \in R_1^o} Z_t(u) \geq a_1, \cdots, \inf _{u \in R_k^o} Z_t(u) \geq a_k\right) \\
    & \leq \P\left(\inf _{u \in R_1^o} Z(u) \geq a_1, \cdots, \inf _{u \in R_k^o} Z(u) \geq a_k\right)
\end{align*} 

By Corollary 2.4 from \citet{pflug1991asymptotic}, $Z_t \todepi Z$ is equivalent to the weak convergence of $\epi Z_t$ to $\epi Z$ in the metric space $(\cC\cS_{n+1}, d_{AW})$.

\end{defn}

The following lemma is Slutsky's theorem \citep[Theorem 7.15]{kosorok2008introduction} specialized to the space $(\cL_n, d_\epi)$.
\begin{lemma}
    \label{thm:slutskyepi}
    Let $(Z_t)_t, (Y_t)_t$ and $Z$ be
    random lsc, extended real-valued functions 
    and $Y$ be a deterministic element in $\cL_n$.
    If $Y_t \toepi Y$ in probability and $Z_t \todepi Z$. Then $Z_t + Y_t \todepi Z + Y$.
\end{lemma}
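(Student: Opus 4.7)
The strategy is to reduce the statement to the classical Slutsky's theorem for weak convergence in a Polish metric space, applied to $(\cL_n, d_\epi)$, and then invoke the continuous mapping theorem with the addition map. The first step is to translate the hypotheses into the metric-space language: by Corollary 2.4 of \citet{pflug1991asymptotic} cited in the excerpt, $Z_t \todepi Z$ is equivalent to $Z_t \rightsquigarrow Z$ as random elements of $(\cL_n, d_\epi)$; likewise $Y_t \toepi Y$ in probability is the same as convergence in probability in this metric. Since $(\cL_n, d_\epi)$ is complete and separable, all the standard Hoffmann--J{\o}rgensen machinery is available.

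Once the problem is phrased in the Polish space $(\cL_n, d_\epi)$, I would invoke \citet[Theorem 7.15]{kosorok2008introduction}: convergence in probability to a \emph{deterministic} limit $Y$ together with weak convergence $Z_t \rightsquigarrow Z$ upgrades to joint weak convergence
\begin{align*}
    (Z_t, Y_t) \rightsquigarrow (Z, Y) \quad \text{in } (\cL_n \times \cL_n,\, d_\epi \times d_\epi).
\end{align*}
Here it is crucial that $Y$ is deterministic; otherwise one cannot in general combine marginal convergences into joint convergence. After this, applying the continuous mapping theorem to the addition map $\phi(f,g) = f+g$ yields $Z_t + Y_t \rightsquigarrow Z + Y$ in $(\cL_n, d_\epi)$, which by the Pflug equivalence is exactly $Z_t + Y_t \todepi Z + Y$.

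\textbf{Main obstacle.} The nontrivial step is verifying continuity of the addition map $\phi$ at the point $(Z(\omega), Y)$ in the epi-topology. Unlike pointwise or uniform convergence, epi-convergence is not in general preserved under sums of two sequences; counterexamples are well known when both limit functions have overlapping regions of discontinuity. To handle this, I would invoke the Skorokhod almost-sure representation on the Polish space $(\cL_n \times \cL_n, d_\epi \times d_\epi)$ to work pathwise, and then verify continuity of addition at $(Z(\omega), Y)$ by a direct epigraph argument using $d_{\mathrm{AW}}$ as defined in \eqref{eq:defdaw}. The needed ingredient is the analogue of Theorem 7.46 of Rockafellar--Wets (on sum rules for epi-convergence), which holds provided one of the summands satisfies a mild regularity condition such as continuity on its effective domain or a constraint qualification on domain overlap. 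In the typical applications in the paper, $Y$ is a smooth deterministic quadratic form (coming from a second-order Taylor expansion) and hence continuous, so this regularity is automatic; I would state and apply this as the key lemma, with the continuity of $Y$ as the mild technical hypothesis that makes the Slutsky reduction go through.
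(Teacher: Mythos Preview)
The paper does not give an explicit proof of this lemma; it simply states that the result is ``Slutsky's theorem \citep[Theorem 7.15]{kosorok2008introduction} specialized to the space $(\cL_n, d_\epi)$.'' Your proposal is therefore exactly the route the paper has in mind: recast epi-convergence in distribution and in probability as Hoffmann--J{\o}rgensen weak convergence and convergence in probability in the Polish space $(\cL_n, d_\epi)$, invoke Kosorok's Slutsky to get joint convergence $(Z_t,Y_t)\rightsquigarrow(Z,Y)$, and finish with continuous mapping for addition.

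You go further than the paper by flagging a genuine technical point the paper glosses over: addition is \emph{not} continuous on $(\cL_n,d_\epi)\times(\cL_n,d_\epi)$ in general, so the continuous mapping step needs a sum rule for epi-convergence (e.g.\ Rockafellar--Wets Theorem~7.46) requiring extra regularity of one summand. Your diagnosis and proposed fix are correct. One small correction on the application: in the paper's use of the lemma (proof of \cref{thm:adaptive_boot}), the deterministic limit $Y=\chi(\cdot\in C)$ is an \emph{indicator} and hence discontinuous, while it is the random limit $Z(h)=G\tp h+\tfrac12 h\tp\cH h$ that is continuous (indeed, $X_t\to Z$ uniformly on compacta, which is stronger than epi-convergence and supplies the needed ``continuous convergence'' hypothesis in the sum rule). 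So the regularity that rescues the argument comes from the $Z$-side, not the $Y$-side as you wrote; otherwise your analysis is on target and more careful than the paper's one-line citation.
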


We also introduce a bootstrap version of weak epi-convergence following Section 2.2.3 in \citet{kosorok2008introduction}.

\begin{defn}[Conditional weak epi-convergence in probability]
    Let $BL$ denote the space of Lipschitz functions $f: (\cL_n, d_{\text{AW}}) \to \R$ with Lipschitz parameter equal 1, i.e., $\sup |f| \leq 1$ and $|f(x) - f(y)| \leq d_{AW}(x,y)$. 
    And suppose $Z_t = Z_t(X, W)$ is defined on a product probability space, where $W$ represents bootstrap weights, and $X$ represents data.
    The process $Z_t$ converges to $Z$ in the sense of weak  epi-convergence conditionally in probability if  
    $\sup_{f \in BL} |\E_W[f(Z_t)| X ] - \E[f(Z)]| \to 0$ in probability, along with certain measurability conditions.

\end{defn}

A bootstrap version of argmin continuous mapping theorem for weak epi-convergence can also be stated \citep{li2020numerical}; see Lemma 10.1 and the proof of Theorem 4.2 in \citep{li2020numerical}.
We will also need a bootstrap version of Slutsky's theorem for weak epi-convergence \citet[Theorem 4]{knight1999epi}.
\section{Proofs}

\subsection{Stochastic Equicontinuity Results for the EG Objective}

Let $\ept = o(1)$ and $K$ be a compact set.
Let $D_F(\t,\b) \in \partial F(\t,\b)$ be a deterministic element of the subgradient.
Note by \nameref{as:twice_diff}
$D^*_F(\cd) = D_F(\cd,\betast) = \nabla F(\cd, \betast)$.
We also let $F(\b) = F(\cd, \b)$. 
Note that in the following claims, we do not need $\nabla H(\betast) = 0$. They work for any $\betast$ at which $H$ is continuously differentiable in a neighborhood.
\begin{claim}
    \label{lm:stocdiff_boot}
    \begin{align*}
        \sup_{h\in K} (P_t - P) ( F(\betast + \ept h) - F(\betast) - \ept h \tp D_F\st(\cd) ) = o_p(\ept / \sqrt t)
        \\
        \sup_{h\in K} (\Pexbt - P_t) ( F(\betast + \ept h) - F(\betast) - \ept h \tp D_F\st(\cd) ) = o_p(\ept / \sqrt t)
    \end{align*}
\end{claim}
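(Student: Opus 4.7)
The plan is to read both displays as stochastic equicontinuity statements for the rescaled class of remainder functions
\begin{align*}
g_{h,t}(\theta) \defeq F(\theta, \betast + \ept h) - F(\theta, \betast) - \ept h\tp D_F^*(\theta), \quad h \in K,
\end{align*}
divided by $\ept$. The task then reduces to showing
\begin{align*}
\sup_{h\in K}\bigl|\sqrt t (P_t - P)(g_{h,t}/\ept)\bigr| \toprob 0
\end{align*}
and its exchangeable analogue. The standard route is to embed $\{g_{h,t}/\ept : h\in K\}$ in a $P$-Donsker class whose $L_2(P)$-diameter shrinks to zero, and then invoke the asymptotic (conditional) equicontinuity of the empirical process on that class.

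First I would establish that the parent class $\cF = \{F(\cdot, \b) : \b \in B(\betast, r)\}$ is $P$-Donsker for some small $r > 0$. Since $F(\theta, \b) = \max_i \beta_i v_i(\theta) - \sum_i b_i \log \beta_i$ is Lipschitz in $\beta$ on a neighborhood of $\betast$ bounded away from zero, with Lipschitz constant depending only on $\vbar$ and $\min_i \betasti$, $\cF$ is a Lipschitz-in-parameter class over a compact finite-dimensional set, hence $P$-Donsker by standard results (e.g.\ Example 19.7 / Theorem 2.7.11 in \citet{vaart2023empirical}). The functions $g_{h,t}/\ept$ sit in an affine rescaling of $\cF$, so the Donsker property transfers. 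I would then verify $\|g_{h,t}/\ept\|_{L_2(P)} \to 0$ uniformly on $K$. Under \nameref{as:twice_diff}, the tie set $T = \{\theta : \argmax_i \betasti v_i(\theta) \text{ is not a singleton}\}$ has $P$-measure zero; otherwise the directional derivative of $H$ at $\betast$ would involve an expectation of a max of distinct linear forms in the direction $h$ and fail to be linear in $h$, contradicting differentiability. For $\theta \notin T$, $F(\theta, \cdot)$ is differentiable at $\betast$ with gradient $D_F^*(\theta)$, so $g_{h,t}(\theta)/\ept \to 0$ pointwise. Lipschitzness supplies the uniform envelope $C\|h\|$, and dominated convergence delivers the $L_2$ convergence. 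Asymptotic equicontinuity of the Donsker empirical process then gives the first display.

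For the exchangeable bootstrap statement, I would invoke the exchangeable bootstrap functional CLT of \citet{praestgaard1993exchangeably}: when $\cF$ is $P$-Donsker and the weights satisfy \cref{as:exboot}, the process $\sqrt t(\Pexbt - P_t)$ is asymptotically equicontinuous on $\cF$ conditionally in probability with respect to the $L_2(P)$ seminorm. Combined with the $L_2$-shrinking property from the previous paragraph, this yields $\sup_{h\in K}\bigl|\sqrt t(\Pexbt - P_t)(g_{h,t}/\ept)\bigr| \toprob 0$, the second display.

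The main technical obstacle is the $P$-a.e.\ differentiability of $F(\theta, \cdot)$ at $\betast$: one must carefully argue that \nameref{as:twice_diff} rules out positive-measure ties, and that on the tie-free set the deterministic selection $D_F^*(\theta)$ from $\partial_\b F(\theta, \betast)$ coincides with the true gradient $\nabla_\b F(\theta, \betast)$ so that the subtracted linear term is truly the first-order expansion. A secondary subtlety is that the class $\{g_{h,t}/\ept : h \in K\}$ depends on $t$, so instead of a pointwise weak-convergence argument one must appeal to a maximal inequality for classes with shrinking $L_2$ envelope (standard; cf.\ Section 2.11 of \citet{vaart2023empirical}), analogous to the equicontinuity arguments already used in \citet{liao2023stat}.
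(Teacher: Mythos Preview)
Your approach is correct but takes a different route from the paper's proof. You work directly with the remainder class $\{g_{h,t}/\ept : h\in K\}$: bound its bracketing entropy via the Lipschitz-in-parameter structure, show its $L_2(P)$ envelope shrinks because $F(\theta,\cdot)$ is $P$-a.e.\ differentiable at $\betast$ (your argument that \nameref{as:twice_diff} forces the tie set to be $P$-null is exactly right), and then apply a maximal inequality. For the bootstrap line you invoke the conditional equicontinuity from the exchangeable bootstrap functional CLT. The paper instead uses Clarke's mean value theorem for locally Lipschitz functions to write $(P_t-P)(F(\beta)-F(\betast))=\zeta\tp(\beta-\betast)$ with $\zeta=(P_t-P)D_F(\cdot,\tilde\beta)$ for some intermediate $\tilde\beta$, thereby reducing the whole claim to stochastic equicontinuity of the \emph{subgradient} process $\beta\mapsto\sqrt t(P_t-P)D_F(\cdot,\beta)$ near $\betast$, which it cites from \citet{liao2023stat}; the bootstrap version then follows from \citet[Lemma 4.1]{wellner1996bootstrapping}. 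Your route is more self-contained and avoids the intermediate subgradient-process lemma; the paper's route is shorter because it outsources the equicontinuity to a prior result and sidesteps the need to control the $t$-dependent remainder class directly. One small presentational point: the phrase ``affine rescaling of $\cF$'' is misleading since dividing by $\ept\to 0$ does not preserve the Donsker property of $\cF$; but you correctly flag this at the end and point to the right fix (shrinking-envelope maximal inequality), so there is no actual gap.
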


\begin{proof}[Proof of \cref{lm:stocdiff_boot}]
        Let $ r_{1,F}(\cdot, \beta) = F(\cdot, \beta) - F(\cd,\betast) - D^*_F(\cdot) \tp (\beta - \betast)$.

        By \nameref{as:twice_diff} there is a neighborhood of $\betast$, say $N$, on which $H$ is differentiable. Then for any $\beta\in N$, the set $\{\theta:\beta\mapsto f(\theta, \beta) \text{ differentiable at } \beta\}$ is measure one.
        Choose $t$ large enough so that the ball $\{\beta:\|\beta-\betast\nmt \leq \delta_t \}$ is contained in $N$.
        By a mean value theorem for locally Lipschitz functions, \citep[Theorem 2.3.7]{clarke1990optimization},
        it holds $(P_t-P) (F(\beta) - F(\betast) ) = \zeta \tp (\beta-\betast)$ where $\zeta \in\partial(P_t - P) F(\tilde \beta)$ and $\tilde \beta$ lies on the segment joining $\beta$ and $\betast$.
        By $\tilde \beta \in N$,                    it holds $\zeta = (P_t - P) D_F(\cd, \tilde \beta)$.
        Then the desired claim is equivalent to
        \begin{align}
            & \sup_{ \|\beta-\betast\|_2\leq \delta_t}  \frac{(\Pt - P) r_{1,F}(\cd, \beta)}{ \frac{1}{\sqrt t} \|\beta-\betast\|_2  } 
            \notag
            \\
            & = 
            \sup_{ \|\beta-\betast\|_2\leq \delta_t}  
            \frac{(\Pt - P) (D_F(\cd, \tilde \beta) - D_F(\cd,\betast))\tp (\tilde \beta - \betast) }{\frac{1}{\sqrt t}\|\beta-\betast\|_2  }  
            \notag
            \\
            & \leq \sup_{ \|\beta-\betast\|_2\leq \delta_t}  \| \sqrt t (P_t - P)(D_F(\cd, \tilde \beta) - D_F(\cd,\betast)) \nmt
             =  o_p(1)
            \label{eq:dfstocheq}
        \end{align}
    where the last equality is due to \citet{liao2023stat}.

    The assumption on the bootstrap weights (\cref{as:exboot}) implies that a bootstrap version of \cref{eq:dfstocheq} holds, i.e.,  
    $\sup_{ \|\beta-\betast\|_2\leq \delta_t}  \| \sqrt t (\Pexbt - \Pt)(D_F(\cd, \tilde \beta) - D_F(\cd,\betast)) \nmt
     =  o_p(1)$ \citep[Lemma 4.1]{wellner1996bootstrapping}.
    The same argument goes through for the proof of bootstrap differentiability. We finish the proof of the lemma.
\end{proof}
\begin{claim}
    \label{lm:stochdiff_of_F}
    \begin{align*}
        \sup_{\|h - s \nmt = o(1), s, h \in K} (P_t - P) (F(\betast + \ept h ) - F(\betast  + \ept s))
        = o_p(\ept / \sqrt t)
        \\
        \sup_{\|h - s \nmt = o(1), s, h \in K} (\Pexbt - P_t) (F(\betast + \ept h ) - F(\betast  + \ept s))
        = o_p(\ept / \sqrt t)
    \end{align*} 
\end{claim}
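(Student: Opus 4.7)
\textbf{Proof plan for \cref{lm:stochdiff_of_F}.}
The strategy is to reduce this claim to \cref{lm:stocdiff_boot} plus a standard (bootstrap) central limit theorem applied to the score $D_F^*(\cdot) = D_F(\cdot,\betast)$. Define the second-order remainder
\begin{align*}
R_t(u) = F(\betast + \ept u) - F(\betast) - \ept u\tp D_F^*(\cdot), \quad u \in K,
\end{align*}
so that, by subtracting and adding $F(\betast) + \ept h\tp D_F^*$ and $F(\betast) + \ept s\tp D_F^*$,
\begin{align*}
F(\betast + \ept h) - F(\betast + \ept s) = R_t(h) - R_t(s) + \ept (h-s)\tp D_F^*(\cdot).
\end{align*}
The first step is to apply $(P_t - P)$ to this identity and bound each piece separately.

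For the remainder pieces, \cref{lm:stocdiff_boot} gives $\sup_{u\in K} |(P_t - P) R_t(u)| = o_p(\ept/\sqrt t)$, and hence
\begin{align*}
\sup_{h, s \in K} |(P_t-P)(R_t(h) - R_t(s))| \leq 2\sup_{u\in K} |(P_t - P) R_t(u)| = o_p(\ept/\sqrt t).
\end{align*}
For the linear piece, note that the map $(h,s) \mapsto \ept (h-s)\tp (P_t - P) D_F^*(\cdot)$ factorizes, and by the ordinary CLT applied to the fixed, square-integrable vector $D_F^*(\cdot) = \nabla F(\cdot,\betast)$ (square-integrability follows from \nameref{as:twice_diff} and $\vbar < \infty$), $(P_t - P) D_F^*(\cdot) = O_p(1/\sqrt t)$. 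Therefore
\begin{align*}
\sup_{\|h-s\nmt = o(1)} \ept \|h-s\nmt \cdot \|(P_t-P)D_F^*(\cdot)\nmt = o(1)\cdot O_p(\ept/\sqrt t) = o_p(\ept/\sqrt t),
\end{align*}
which handles the first display.

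For the second (bootstrap) display, I would repeat the same decomposition with $(\Pexbt - P_t)$ in place of $(P_t - P)$. The bound on the remainder pieces is the bootstrap half of \cref{lm:stocdiff_boot}, while the linear piece is controlled by the conditional bootstrap CLT for exchangeable weights satisfying \cref{as:exboot}, which yields $(\Pexbt - P_t) D_F^*(\cdot) = O_p(1/\sqrt t)$ (see \citealp{praestgaard1993exchangeably, wellner1996bootstrapping}). Multiplying by $\ept \|h-s\nmt = o(\ept)$ gives the desired $o_p(\ept/\sqrt t)$ rate.

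The only mildly subtle point is the uniformity claim: we need the suprema to be well-defined and measurable, but since $K$ is compact, the integrand $F(\cdot,\beta)$ is continuous in $\beta$ on a neighborhood of $\betast$ (implicitly by \nameref{as:twice_diff}), and the classes $\{R_t(u): u\in K\}$ and $\{(h-s)\tp D_F^*(\cdot): h,s\in K\}$ are indexed by compact parameter sets, so standard separability/measurability arguments from empirical process theory apply. Thus the only real work is the two-line decomposition; everything else is a direct appeal to \cref{lm:stocdiff_boot} and the (bootstrap) CLT.
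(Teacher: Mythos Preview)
Your proposal is correct and is precisely the decomposition the paper has in mind: the paper's own proof is the single line ``This is implied by \cref{lm:stocdiff_boot},'' and your argument is exactly how that implication is unpacked---split into the two remainder pieces $R_t(h), R_t(s)$ (each $o_p(\ept/\sqrt t)$ by \cref{lm:stocdiff_boot}) plus the linear piece $\ept(h-s)\tp D_F^*(\cdot)$, which is $o_p(\ept/\sqrt t)$ by the (bootstrap) CLT and $\|h-s\nmt = o(1)$. If anything, your write-up is more complete than the paper's, since you make explicit the CLT step for the linear term that the paper leaves implicit.
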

\begin{proof}
    This is implied by \cref{lm:stocdiff_boot}.
\end{proof}
\begin{claim} 
    \label{claim:htquadraticexpansion}
    \begin{align*}
        \sup_{h \in K} P_t ( F(\betast + \ept h) - F(\betast) - \ept h \tp D\st(\cd) - \ept \sq h \tp \cH h ) = o_p(\ept / \sqrt t + \ept \sq)
    \end{align*}
\end{claim}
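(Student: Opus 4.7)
The plan is to decompose the quantity inside the supremum into a stochastic remainder and a deterministic Taylor remainder, and handle each separately. Write
\begin{align*}
& P_t\bigl(F(\betast+\ept h)-F(\betast)-\ept h\tp D^*_F(\cd)-\tfrac{1}{2}\ept\sq h\tp \cH h\bigr) \\
&\quad = (P_t-P)\bigl(F(\betast+\ept h)-F(\betast)-\ept h\tp D^*_F(\cd)\bigr) \\
&\quad\phantom{=} + P\bigl(F(\betast+\ept h)-F(\betast)-\ept h\tp D^*_F(\cd)\bigr)-\tfrac{1}{2}\ept\sq h\tp\cH h,
\end{align*}
where the quadratic term is deterministic and is therefore annihilated by $(P_t-P)$. (I assume the statement intends $\tfrac12\ept\sq h\tp\cH h$, matching the second-order Taylor coefficient of $H$; otherwise the $o_p$ claim cannot hold.)

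First, I would bound the stochastic remainder. By \cref{lm:stocdiff_boot},
\[
\sup_{h\in K}(P_t-P)\bigl(F(\betast+\ept h)-F(\betast)-\ept h\tp D^*_F(\cd)\bigr)=o_p(\ept/\sqrt t),
\]
which gives exactly the $o_p(\ept/\sqrt t)$ part of the target rate. No extra work is needed here.

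Second, I would handle the deterministic remainder. Since $H(\b)=PF(\cd,\b)$ and \nameref{as:twice_diff} guarantees that $H$ is twice continuously differentiable in a neighborhood of $\betast$ with Hessian $\cH$, a standard dominated-convergence / interchange-of-derivative argument shows $\nabla H(\betast)=P D^*_F(\cd)$. A second-order Taylor expansion of $H$ at $\betast$ therefore yields, uniformly over the compact set $K$,
\[
H(\betast+\ept h)-H(\betast)-\ept h\tp \nabla H(\betast)-\tfrac12 \ept\sq h\tp \cH h = o(\ept\sq),
\]
by continuity of $\nabla^2 H$ at $\betast$ and compactness of $K$ (the remainder can be written as $\tfrac12\ept\sq h\tp(\nabla^2 H(\tilde\b)-\cH)h$ for some $\tilde\b$ on the segment between $\betast$ and $\betast+\ept h$, which is $o(\ept\sq)$ uniformly in $h\in K$). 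This supplies the $o(\ept\sq)$ component, and combining the two displays gives the claim.

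I do not expect a significant obstacle: the hard analytic work (the uniform stochastic differentiability) has already been done in \cref{lm:stocdiff_boot}, and the remaining ingredient is a routine Taylor expansion of the smooth population objective $H$. The only subtlety to check is the interchange $\nabla H(\betast)=P\nabla F(\cd,\betast)$, which follows from \nameref{as:twice_diff} together with the boundedness $\vbar<\infty$ of the valuations (so that $F(\cd,\b)$ and its subgradient selection are dominated on a neighborhood of $\betast$, justifying differentiation under the integral sign).
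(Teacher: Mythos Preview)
Your proposal is correct and follows essentially the same route as the paper: split $P_t r_{2,F}(\ept h)$ into $(P_t-P)r_{2,F}(\ept h)$ (controlled by \cref{lm:stocdiff_boot}, noting the deterministic quadratic term drops out) and $P r_{2,F}(\ept h)$ (handled by the second-order Taylor expansion of $H$ under \nameref{as:twice_diff}). Your observation that the statement should read $\tfrac12\ept\sq h\tp\cH h$ is also consistent with the paper's own proof, which defines $r_{2,F}$ with the factor $\tfrac12$.
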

\begin{proof}
    Let $r_{2,F}(h)\defeq F(\cdot, \betast + h) - F(\cdot, \betast) - D_F(\cdot,\betast)\tp h - \frac12 h\tp \cH h$.

    Split the LHS by $\sup_h | P_t r_{2,F} (\ept h) | \leq \sup_h |P r_{2,F} (\ept h)| + \sup_h |(P_t - P)r_{2,F}(\ept h)| $. The first term is $o(\ept \sq)$ by twice differentiability.
    The second term is bounded by $o_p(\ept / \sqrt t)$ as in \cref{lm:stocdiff_boot}.
\end{proof}

\textbf{Notations in the proof sections}.
Define the demeaned and the centered function 
$\tilde F(\cdot, \beta) = F(\cdot,\beta) - \E[F(\theta, \beta)]$ and
$\bar   F(\cd, \beta)   = F(\cd, \beta) - F(\cd, \betast)$. Let $H^b (\beta) = \Pbt F(\cd, \beta)$ be the bootstrapped EG objective.
\subsection{Proof of \cref{thm:exboot_fm}}
\label{sec:proof:thm:exboot_fm}
\begin{proof}[Proof of \cref{thm:exboot_fm}]
    In the proof, we use $\betab$ to denote the exchangeable bootstrap estimator \cref{eq:defexboot_lfm} and use $\Pbt$ to denote the bootstrap empirical operator with exchangeable weights (\cref{as:exboot}).

    \textit{Step 1}. Show $\betab \toprob \betast$.
    The consistency of the bootstrap estimator is implied by uniform convergence of $H^b(\cd)$ to $H(\cdot)$ and uniqueness of $\betast$. For proof, we refer readers to the proof of Theorem 3.5 from \citet{gine1992bootstrap}.
    
    \textit{Step 2}. Show $\betab - \betast = O_p(1/\sqrt t)$.
    
    Define 
    \begin{align*}
        \Delta^\gam \defeq \cH\inv (\Pt - P) D_F(\cdot, \betast),
        \\
        \Delta^b \defeq \cH\inv(\Pbt - \Pt) D_F(\cd, \betast).
    \end{align*} 

    Let $ r_{1,F}(\cdot, \beta) = F(\cdot, \beta) - F(\cd,\betast) - D^*_F(\cdot) \tp (\beta - \betast)$, $D^*_F(\cd) = D_F(\cd,\betast) = \nabla F(\cd, \betast)$.
   
    We begin with the optimality of $\betab$ and then apply the definition of $r$.
    For ease of notation, we let $F(\beta) = F(\cdot,\beta)$.
    We have
    \begin{align}
       0 & \geq \Pexbt (F(\betab) - F(\betast)) 
       \notag
       \\
       & = (\Pexbt - \Pt + \Pt - P)(F(\betab) - F(\betast)) + P(F(\betab) - F(\betast))
        \notag
       \\
       & = 
       (\Delta^b + \Delta^\gamma) \tp \cH (\betab - \betast) 
       + P(F(\betab) - F(\betast))
       \\
        & \quad + (\Pexbt - \Pt + \Pt - P) r_{1,F}(\cdot, \betab) 
       \notag
       \\
       & \geq (\Delta^b + \Delta^\gamma + o_p(1)) \tp (\betab - \betast) 
       + c\cdot \|\betab - \betast \|_2\sq
       \label{eq:nuboot_rate}
    \end{align}
    where in the last inequality we used 
    (i)  $(\Pexbt - \Pt) r_{1,F}(\cdot, \betab) = o_p(\frac{1}{\sqrt t} + \|\betab-\betast\|_2) \cdot \|\betab-\betast\|_2 = o_p(1) \|\betab-\betast\|_2$ by \cref{lm:stocdiff_boot},
    (ii) $(\Pt - P) r_{1,F}(\cdot, \betab)    = o_p(1) \|\betab-\betast\|_2 $ by \cref{lm:stocdiff_boot},                 and
    (iii) $\beta \mapsto PF(\cdot, \beta)$ is locally strongly convex at $\betast$, and so there is a neighborhood of $\betast$ and a constant $c>0$ such that $P(F(\beta) - F(\betast)) \geq c \|\beta-\betast\|_2 \sq$ for all $\beta$ in this neighborhood.
    The expression \cref{eq:nuboot_rate} now becomes $0 \geq O_p(\invtroot) \|\betab -\betast\|_2 + c \|\betab - \betast\|_2\sq$. 
    Since the case $\betab - \betast = 0$ can be easily excluded, we divide both sides by $\|\betab - \betast\|_2$ and conclude that $(\betab - \betast) = O_p(1/\sqrt t)$.
    
    \textit{Step 3}. Find the asymptotic distribution.
    Since $\beta^b$ is the minimizer of $\Pexbt F$ over $\Rnp$, defining $\bar F(\beta) = F(\cdot, \beta) - F(\cd,\betast)$, we have

    \begin{align*}
        0 & \geq \Pexbt(\bar F (\beta^b) - \bar F(\Delta^b + \Delta^\gam + \betast))
        \\
        & = [(\Pexbt - \Pt) + (\Pt - P)] (\bar F (\beta^b) - \bar F(\Delta^b + \Delta^\gam + \betast)) + P(\bar F (\beta^b) - \bar F(\Delta^b + \Delta^\gam + \betast))
        \\
        & = (\Delta^b + \Delta^\gam) \tp \cH (\beta^b - \betast - (\Delta^b + \Delta^\gam  )) + \frac 12 \| \beta^b - \betast\|_\cH\sq - \frac 1 2 \|\Delta^b + \Delta^\gam \|_\cH\sq
         \\
        & \quad +  [(\Pexbt - \Pt) + (\Pt - P)] (r_{1,F}(\cd, \betab) - r_{1,F}(\cd, \Delta^b + \Delta^\gam + \betast))
        \\
        & = \frac12 \| \Delta^b + \Delta^\gam + (\beta^b - \betast)\|_\cH\sq + o_p(1/t)
    \end{align*}
    where in the last line we used 
    (i)  $(\Pexbt - \Pt ) r_{1,F}(\cd, \beta)   = o_p(1/t)$ for any random $\beta$ such that $\beta = \betast + O_p(1/\sqrt t)$ by \cref{lm:stocdiff_boot},
    (ii) $(\Pt - P ) r_{1,F}(\cd, \beta)      = o_p(1/t)$ for any random $\beta$ such that $\beta = \betast + O_p(1/\sqrt t)$ by \cref{lm:stocdiff_boot},
    and  (iii)         $P \bar F(\beta) = \frac12 \|\beta - \betast\|_\cH\sq + o(\|\beta - \betast\|_2\sq)$ for $\beta\to\betast$ due to $\nabla H(\betast) = 0$.
  
    Rearranging gives $t \| \Delta^b + \Delta^\gam + (\beta^b - \betast)\|_2\sq = o_p(1)$.
    Next       , using $ \betagam - \betast                                     = - \Delta^\gam + o_p(1/\sqrt t)$ \citep{liao2023stat}
    we have 
    \begin{align*}
        \sqrt t (\beta^b - \betagam) = -
        \sqrt t \Delta^b + o_p(1)
    \end{align*}
    By an exchangeable bootstrap CLT \citep{praestgaard1993exchangeably}, we know $\sqrt t \Delta^b \tod  c \cdot \cH\inv \cN(0, \E[\nabla F(\cd,\betast)\nabla F(\cd,\betast)\tp])$ conditional on almost all data sequence $\gamma$, where $c$ is the constant defined in \cref{as:exboot}. This concludes the proof.
    \end{proof}
  
\subsection{Proof of Numerical Bootstrap (\cref{thm:nuboot_fm} and \cref{thm:nuboot_fppepoor})}
\label{sec:proof:thm:nuboot_fm}

\nubootlfm*
\begin{proof}[Proof of \cref{thm:nuboot_fm} ] 
    We verify the assumptions in Theorem 4.1 of the numerical bootstrap paper \citep{li2020numerical}. 

    For \cref{thm:nuboot_fm}, let $B = \Rnpp$, $\betab$ be  $\betab_{\mathsf{nu,LFM}}$, and $\betast$ be the equilibrium pacing multiplier in Fisher market.
    We restate the assumptions in Theorem 4.1 of \citep{li2020numerical} in our notations.

    \begin{enumerate}[(i)]
        \item $H_t(\betagam) \leq \inf_B H_t + o_p (1/t)$, and $H^b_t(\betab) \leq \inf_B H^b_t  + o_p^*( \ept^2)$.
        \item $\betagam \toprob \betast$, and $\betab - \betagam = o_p^*(1)$.
        \item $\betast$ is an interior point of $B$.
        \item The class $\{ F(\cd, \beta) - F(\cd,\betast) : \|\beta-\betast\nmt \leq R \}$ is uniformly manageable.
        \item $H$ is twice differentiable at $\betast$ with positive definite Hessian $\cH$.
        \item The limit $ \Sigma(s,t)\defeq 
        \lim_{\epsilon\downarrow 0} \frac{1}{\epsilon \sq}
        \cov\big[
        F(\cdot,\betast + \epsilon s) -  F(\cdot,\betast), 
        F(\cdot,\betast + \epsilon t) - 
        F (\cdot,\betast)
        \big]$ exists for each $s,t$, 
        \item For all $\delta>0$ it holds $\lim_{\epsilon\downarrow 0} \frac1\epsilon \E[(F(\cdot, \betast + \epsilon s) - F(\cdot, \betast))\sq \indi (|F(\cdot, \betast + \epsilon s) - F(\cdot, \betast)|> \delta )] = 0$.
        \item Let 
        $G_R(\cdot) \defeq \sup_{ \|\beta-\betast\|_2 \leq R} | F(\cdot, \beta) - F(\cdot, \betast)| $.
        As $R\to0$,
        $ 
             \E[G_R\sq] = O(R\sq)
        $.
        \item $ \sqrt t \ept \to \infty $ and $\ept \downarrow 0$.
        \item $\E[G_R\sq \indi(R G_R > \eta )] = o(R\sq)
        $ for all $\eta > 0$.
        \item There is a neighborhood of $\betast$ such that $\E[|F(\cd, \beta) - F(\cd,\beta')|] = O(\| \beta - \beta'\|_2)$ for $\beta,\beta'$ in this neighborhood. 
        \footnote{In the \citet{li2020numerical} it is required $\E[|F(\cd, \beta) - F(\cd,\beta')|] = O(\| \beta - \beta'\|_2^2)$, which we think does not hold for the usual square root case.}
    \end{enumerate}

    Implicit in the paper, it is also required that the gradient of the population objective is zero at optimum. This is true for Fisher market.

    We now verify these conditions.
    Condition (i) holds because we consider exact minimizers. 
    Condition (ii): 
    The $\betagam - \betast \toprob 0$ part has been verified in \citet{liao2023stat}. It remains to show the $\beta^b- \betagam \toprob 0$ part. 
    Since $\beta \mapsto \Pbt F(\cd,\beta)$ converges to $H$ in probability pointwise and that $\beta \mapsto \Pbt F(\cd,\beta)$ is convex, it holds that the convergence is uniform over compact sets.  
    By uniqueness of $\betast$ it holds $\betab \toprob \betast$.
    Condition (iii) naturally holds for the linear Fisher market.
    Condition (iv) requires that there is an $R_0>0$ such that the function class $\{ F(\cdot,\beta) - F(\cdot ,\betast): \|\beta - \betast\|_2\leq R \}$ is uniformly manageable for all $R \leq R_0$.
    It holds because the class $\{ \theta \mapsto f(\theta, \beta) - f(\theta,\betast): \beta\in B\}$ is uniformly manageable, which is verified in 
    \citet{liao2023stat}.
    Condition (v) is assumed in 
    \nameref{as:twice_diff}.

    Consider the set $\prod_{i=1}^n [\betasti/2, 1]$. On this set, for all $\theta$, the function $\beta \mapsto F(\theta,\beta)$ is Lipschitz with parameter $L=\vbar + 2\sqrt n$ w.r.t.\ $\ell_2$ norm.
    Given this Lipschitz result, conditions (vi), (vii), (viii), and (x) follow easily.

    In condition (vi) 
    the function $\Sigma$ is $\Sigma(s,t) = s\tp \E[ \nabla \tilde F (\cdot, \betast) \nabla \tilde F(\cdot, \betast)\tp   ] t = s \tp \E[\nabla F(\cd,\betast) \nabla F (\cd, \betast) \tp ] t$ by using the dominated convergence theorem.

    Now we can invoke Theorem 4.1 from \citet{li2020numerical} and conclude $\epsilon_t\inv (\betab_{\mathsf{nu, LFM}} - \betagam) \topw \cJ_\LFM$.
\end{proof}

    \begin{proof}[Proof of \cref{thm:nuboot_fppepoor}] 
    For \cref{thm:nuboot_fppepoor}, let $B = [0,1]^n$.
    Let $\betab$ refer to $\betab_{\mathsf{nu,FPPE}}$, and $\betast$ be the equilibrium pacing multiplier in FPPE.
    As before, it is implicitly assumed in \citet{li2020numerical} that the 
    gradient of $H$ equals zero at $\betast$. This is true for FPPE if and only if all buyers spend their budgets.
    Theorem 4.2 from \citet{li2020numerical} requires that all conditions stated above except (i) and (iii) hold, and that $\betast$ uniquely minimizes $H$ over $B$, which is true in FPPE.
    Now we can invoke Theorem 4.2 from \citet{li2020numerical} and conclude $ \epsilon_t\inv (\beta^b_\mathsf{nu, FPPE} - \betagam) \topw \cJ_\FPPE $. 
    Note that under the assumption that all buyers spend their budgets, the limit distribution simplifies to 
        $\cJ_\FPPE = \argmin_{h \in C } G\tp h + \frac12 h\tp \cH h$ where $G\sim \cN(0, \E[\nabla F(\cd,\betast) \nabla F(\cd, \betast) \tp])$
    and $C         = \{h \in \Rn: h_i \leq 0, i\in I_{==}\} $ 
\end{proof}

\subsection{Proof of \cref{thm:hessian_est} and formal statement}
\label{sec:thm:hessian_est}
Below we start by giving a more formal version of \cref{thm:hessian_est} and then prove it.
\begin{theorem}[Hessian estimation] 
    Assume $H(\cd)$ is four times continuously differentiable in a neighborhood of $\betast$.
    Consider the finite difference estimate defined in \cref{eq:def_Gb_H} with differencing stepsize $\eta_t = o(1)$ and $\eta_t\sqrt t \to \infty$. 
    For some intermediate quantity $\check H _{k\ell}$, it holds 
      $    \hat \cH_{k\ell}  - \check \cH_{k\ell}=   o_p(\eta_t\sq + \tfrac{1}{\sqrt{t\eta_t\sq}}) + O_p(\frac{1}{\sqrt{t}})
      $, 
      $    \E[(\check \cH_{k\ell} - \cH_{k\ell})\sq] = 
       { \Theta(\eta_t ^ 4 + \frac{1}{t\eta_t\sq} )}+ o(\eta_t^4 + \frac{1}{t\eta_t\sq})
       $
      where the $O_p(1/\sqrt t)$ part does not depend on $\eta_t$. Proof in \cref{sec:thm:hessian_est}.
  \label{thm:hessian_est_formal}
  \end{theorem}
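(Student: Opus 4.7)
\textbf{Proof plan for \cref{thm:hessian_est_formal}.}
My plan is to pick the intermediate quantity $\check \cH_{k\ell} \defeq (\hat\nabla\sq_{k\ell,\eta_t} H_t)(\betast)$, i.e., the same four-point finite difference applied to the sample objective but evaluated at the true $\betast$ rather than at $\betagam$. I then decompose
\begin{align*}
\hat \cH_{k\ell} - \cH_{k\ell}
=\underbrace{\bigl[(\hat\nabla\sq_{k\ell,\eta_t} H_t)(\betagam) - (\hat\nabla\sq_{k\ell,\eta_t} H_t)(\betast)\bigr]}_{A:\ \text{plug-in error}}
+\underbrace{\bigl[(\hat\nabla\sq_{k\ell,\eta_t} H_t)(\betast) - (\hat\nabla\sq_{k\ell,\eta_t} H)(\betast)\bigr]}_{B:\ \text{stochastic error}}
+\underbrace{\bigl[(\hat\nabla\sq_{k\ell,\eta_t} H)(\betast) - \cH_{k\ell}\bigr]}_{C:\ \text{deterministic bias}}.
\end{align*}
The task is to show $A = o_p(\eta_t\sq + 1/\sqrt{t\eta_t\sq}) + O_p(1/\sqrt t)$ while $B+C$ contributes the $\Theta(\eta_t^4 + 1/(t\eta_t\sq))$ MSE to $\check\cH_{k\ell}-\cH_{k\ell}$.

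\textbf{Bias $C$ and variance $B$.} For $C$, I would apply a fourth-order Taylor expansion of $H$ at $\betast$, which is legitimate under four-times continuous differentiability. Writing the four-point sum as $\sum_{\sigma\in\{\pm 1\}\sq}\sigma_1\sigma_2 H(\betast + \eta_t\sigma_1 e_k+\eta_t\sigma_2 e_\ell)$, the zeroth, first, and third order terms cancel by the sign structure, the second order terms reproduce $4\eta_t\sq\,\partial_{k\ell}H(\betast)$, and the fourth-order remainder is $\Theta(\eta_t^4)$ times $\partial^4_{kkk\ell}H(\betast) + \partial^4_{k\ell\ell\ell}H(\betast)$, giving $C = \Theta(\eta_t\sq)$ in general. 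For $B$, introduce
$\phi_{k\ell,\eta_t}(\theta) \defeq \sum_\sigma \sigma_1\sigma_2 F(\theta, \betast + \eta_t\sigma_1 e_k + \eta_t\sigma_2 e_\ell)$
so that $B = (P_t-P)\phi_{k\ell,\eta_t}/(4\eta_t\sq)$. A direct variance computation gives $\E[B\sq] = \var[\phi_{k\ell,\eta_t}]/(16\eta_t^4 t)$; using that $F(\theta,\cdot)$ is Lipschitz in $\beta$ uniformly in $\theta$ on a compact neighbourhood of $\betast$, one obtains $\var[\phi_{k\ell,\eta_t}] = O(\eta_t\sq)$, hence $\E[B\sq]=O(1/(t\eta_t\sq))$. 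Combined with $C$, this delivers the advertised MSE for $\check \cH_{k\ell}-\cH_{k\ell}$.

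\textbf{Plug-in error $A$ and the main obstacle.} I split $A = A_1+A_2$ with $A_2 \defeq (\hat\nabla\sq_{k\ell,\eta_t}H)(\betagam) - (\hat\nabla\sq_{k\ell,\eta_t}H)(\betast)$ and $A_1 \defeq A - A_2$. For $A_2$, a mean-value argument plus Taylor expansion of $\nabla\sq H$ gives $A_2 = \partial_\beta\cH_{k\ell}(\betast)\tp(\betagam-\betast) + o_p(1/\sqrt t)$, which is $O_p(1/\sqrt t)$ and crucially independent of $\eta_t$. The main obstacle is controlling $A_1 = (P_t-P)[\phi_{k\ell,\eta_t}(\betagam,\cdot) - \phi_{k\ell,\eta_t}(\betast,\cdot)]/(4\eta_t\sq)$: the integrand depends on two vanishing scales ($\eta_t$ and the localisation radius $\|\betagam-\betast\|\asymp t^{-1/2}$), and the maximum inside $F$ precludes pointwise smoothness in $\beta$. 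The remedy is a stochastic equicontinuity bound for the empirical process indexed by $\{\phi_{k\ell,\eta_t}(\cdot,\beta) - \phi_{k\ell,\eta_t}(\cdot,\betast): \|\beta-\betast\|\le r\}$: each function in the class is Lipschitz in $\beta$ with a constant $O(1)$ uniform in $\theta$ and $\eta_t$, so the class admits an $L^2(P)$ envelope $O(r)$ and fits the uniform manageability framework already invoked in \citet{liao2023stat}. This yields $\sup_{\|\beta-\betast\|\le r}|(P_t-P)[\phi_{k\ell,\eta_t}(\beta)-\phi_{k\ell,\eta_t}(\betast)]| = O_p(r/\sqrt t)$, and plugging in $r\asymp 1/\sqrt t$ and dividing by $\eta_t\sq$ gives $A_1 = O_p(1/(t\eta_t\sq))$. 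Since $\eta_t\sqrt t\to\infty$ implies $1/(t\eta_t\sq) = o(1/\sqrt{t\eta_t\sq})$, $A_1$ is absorbed into the $o_p(1/\sqrt{t\eta_t\sq})$ term, and combining with $A_2$ completes the bound on $\hat\cH_{k\ell}-\check\cH_{k\ell}$.
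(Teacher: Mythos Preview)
Your proposal is correct and follows essentially the same route as the paper: the paper defines the identical intermediate $\check\cH_{k\ell}=(\hat\nabla^2_{k\ell,\eta_t}H_t)(\betast)$, makes the same split of the plug-in error into an empirical-process piece $R$ (your $A_1$) and a population piece $S$ (your $A_2$), and handles the bias/variance of $\check\cH_{k\ell}$ exactly as you describe. The only cosmetic difference is in the bound on $A_1$: the paper rescales by $\eta_t$, writes $\betagam+\eta_t(e_k+e_\ell)=\betast+\eta_t\big((\betagam-\betast)/\eta_t+e_k+e_\ell\big)$ with $(\betagam-\betast)/\eta_t=o_p(1)$, and then invokes the pre-established equicontinuity lemma $\sup_{\|h-s\|=o(1)}(P_t-P)\bigl(F(\betast+\eta_t h)-F(\betast+\eta_t s)\bigr)=o_p(\eta_t/\sqrt t)$ to get $R=o_p(1/(\sqrt t\,\eta_t))$, whereas you argue directly via a Lipschitz envelope $O(r)$ and manageability to obtain $A_1=O_p(1/(t\eta_t^2))$; both bounds are $o_p(1/\sqrt{t\eta_t^2})$ and rest on the same underlying Donsker property.
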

\begin{proof}[Proof of \cref{thm:hessian_est} and \cref{thm:hessian_est_formal}]
    The proof follows the idea in Lemma 2 from \citet{cattaneo2020bootstrap}. The main difference is their result is for cube-root asymptotics, while our setting is the usual square-root asymptotics.
    Define 
    \begin{align*}
        \check \cH_{k\ell} \defeq   (\tdifft H_t )(\betast)  , 
        \quad 
        \bar \cH_{k\ell} (\beta) \defeq  (\tdifft H ) (\beta)
    \end{align*}
    Then $\hat \cH_{k\ell} - \check \cH_{k\ell} = R + S$
    where 
    \begin{align*}
        R = \hat \cH_{k\ell} - \check \cH_{k\ell} - \bar \cH_{k\ell} (\betagam) + \bar \cH_{k\ell} (\betast) = \tdifft [(H_t - H)  (\betagam) - (H_t - H)(\betast)] ,
        \quad
        S = \bar \cH_{k\ell} (\betagam) - \bar \cH_{k\ell} (\betast) .
    \end{align*}
    It will be shown that 
    (i)  $R                                           = o_p(1/ (\sqrt t \eta_t)) $,
    (ii) $ S                                          = O_p(\frac1{\sqrt t}) + o_p(\eta_t\sq)$, and
    (iii) $\E[(\cH_{k\ell} - \check \cH_{k\ell})\sq ] = \Theta( \eta_t ^ 4 + \frac{1}{t\eta_t\sq}  ) + o(\eta_t^4 + \frac{1}{t\eta_t\sq})$.
  
    To show (i), by \cref{lm:stochdiff_of_F},
    \begin{align}
        \label{eq:stocheq_eps}
        \eta_t\inv \sqrt t \sup_{T_t} (P_t - P)( F(\cd, \betast + t_1 \eta_t) -F(\cd, \betast + t_2 \eta_t) ) = o_p(1)
    \end{align}
    where the supremum runs over the set $T_t=\{(t_1, t_2): \|t_1\|, \|t_2\|\leq M, \|t_1 - t_2\|\leq \delta_t\}$ for some $M>0$ and $\delta_t\downarrow 0$.
    Alternatively, the claim \cref{eq:stocheq_eps} can be proved following the proof of Theorem 4.1 from \citet{li2020numerical} or Lemma 4.6 from \citet{kim1990cube}.
    With this bound, letting $\delta = ( \betagam - \betast ) / \eta_t = o_p (1)$, terms such as
    \begin{align*}
       & (\Pt-P)(F(\cd, \betagam + \eta_t(e_k + e_\ell)) - F(\cd, \betast+ \eta_t(e_k + e_\ell)))
        \\
       & = (\Pt-P) (F(\cd, \betast+ \eta_t(\underbrace{\delta+e_k + e_\ell}_{t_1})) - F(\cd, \betast+ \eta_t(\underbrace{e_k + e_\ell}_{t_2})))
       \\
       & \leq \sup_{T_t} (P_t - P)( F(\cd, \betast + t_1 \eta_t) -F(\cd, \betast + t_2 \eta_t) )
       \\
       & = o_p(\eta_t / \sqrt t)
    \end{align*}
    can be upper bounded as above.
    And so $R = \frac1{\eta_t\sq} o_p(\eta_t/ \sqrt t) = o_p(1/\sqrt{t\eta_t\sq})$.
  
    To show (ii), by Taylor's theorem, $S =( \nabla_\beta \nabla_{k\ell} \sq H(\beta) | _{\beta=\betast})\tp (\betagam - \betast) + o_p(\eta_t\sq) = O_p(\frac1{\sqrt t}) + o_p(\eta_t)$, and the $O_p(1/\sqrt t)$ term does not involve $\eta_t$.
  
    Finally, to show (iii), we calculate the bias and variance of $\check \cH_{k \ell} $.
    Let $d(\cd) = 
    F(\cd, \betast + \eta_t ( e_k + e_\ell))
    - F(\cd, \betast + \eta_t ( - e_k + e_\ell))
    - F(\cd, \betast + \eta_t ( e_k - e_\ell))
    +  F(\cd, \betast + \eta_t ( - e_k - e_\ell))
    $.
    Then $\check \cH_{k\ell} = \frac{1}{4\eta_t\sq} P_t d(\cd)$.
    For the bias,
    following the proof of Lemma 2 from \citet{cattaneo2020bootstrap}, by Taylor's theorem,
    \begin{align*}
        \E[\check \cH_{k\ell} - \cH_{k\ell}] = \frac16 (\nabla\sq_{k} \nabla\sq_{k\ell} H(\betast)  +\nabla\sq_{\ell} \nabla\sq_{k\ell}  H (\betast)) \eta_t\sq + o (\eta_t\sq)
    \end{align*}
    To see this, let $g(\eps) = H(\betast+\eps h) - H(\betast)$. Then $g(0) = 0$. Also let $\cH(\b) = \nabla \sq H(\b)$.
    Now $g^{(1)} (\eps)= \nabla H(\betast + \eps h)\tp h$, $g^{(2)} (\eps)=  h \tp \cH (\betast + \eps h) h$,
    $g^{(3)} (\eps) = \sumiton h_i\sq h\tp \nabla_\b \cH_{ii} (\betast + \eps h) + 2 \sum_{i<j} h_{i} h_j h\tp \nabla_\b \cH(\betast + \eps h) $, and $g^{(4)} =  \sumiton h_i\sq h\tp \nabla_\b\sq \cH_{ii} (\betast + \eps h) h + 2 \sum_{i<j} h_{i} h_j h \tp \nabla_\b\sq  \cH(\betast + \eps h)  h $.
  
    For the variance,  note $\var(\check \cH_{k\ell}) = \frac{1}{16 t \eta_t^4} \var( d(\cd)) = \frac{1}{16 t\eta_t^4} \E[ d(\cd)\sq] + O(1/t)  $.
    Next, 
    \begin{align*}
        \eta_t ^{-2} \E[ ( F(\cd, \betast + \eta_t ( e_k + e_\ell)) - F(\cd, \betast))\sq]
        \to ( e_k + e_\ell)\tp \E[ \nabla F(\cd, \betast)\ot ]( e_k + e_\ell)  
    \end{align*}
    and so $\E[d(\cd)\sq] = \Theta(\eta_t\sq)$.
    Conclude that 
    \begin{align*}
        \var(\check \cH_{k\ell})
         =  \Theta( \frac{1}{t\eta_t\sq} ) + O(1/t) .
    \end{align*}
  \end{proof}
  
\subsection{Proof of Proximal Bootstrap (\cref{thm:prboot_fm} and \cref{thm:prboot_fppepoor})}
\label{sec:proof:thm:prboot_fm}

\begin{proof}[Proof of \cref{thm:prboot_fm}]
As $t \to \infty$, $\ept\inv (\betab - \betagam)= - \hat \cH \inv G^b$ with probability approaching 1 due to $\ept = o(1)$ and $\ept \sqrt t \to \infty$.
Next, $G^b = \sqrt t (\Pbt - \Pt) D_F(\cd,\betagam) = \sqrt t (\Pbt - \Pt)( D_F(\cd,\betagam) - D_F (\cd,\betast) ) + \sqrt t (\Pbt - \Pt) D_F(\cd,\betast) = \sqrt t (\Pbt - \Pt) D_F(\cd,\betast)  + o_p(1) \tod \cN(0, \E[\nabla F(\cd, \betast) \nabla F(\cd, \betast)\tp])$ 
conditional on data,
by \citet[Lemma 4.1]{wellner1996bootstrapping}.
Also $\hat \cH \toprob \cH$ and by continuity of matrix inverse, $\hat \cH \inv \toprob \cH \inv$.
We conclude $\eps_t \inv (\betab - \betagam)$ converges in distribution to $\cJ_{\LFM}$ conditionally in probability.
\end{proof}

\begin{proof}[Proof of \cref{thm:prboot_fppepoor}]
   We present proof following the idea in the working paper of \citet{li2023proximal}.
    In fact, most of the conditions required in that paper have been established in \citet{liao2023stat}, such as stochastic equicontinuity of certain processes.

    Let $B = [0,1]^n$. Let $\betab$ refer to $\betab_{\mathsf{nu,FPPE}}$, and $\betast$ be the equilibrium pacing multiplier in FPPE.
    It is assumed in \citet{li2023proximal} that the 
    gradient of $H$ equals zero at $\betast$. This is true for FPPE if and only if all buyers spend their budgets, i.e., $I_{=>} = \emptyset$.

Step 1. Show $\betab \toprob \betast$. 
    Since $\dtt \to 0$ and $G^b = O_p(1)$, we have $\dtt G^b = o_p(1)$. Then for each $\beta \in \Rn$
    \begin{align*}
        & \dtt (G^b)\tp \b + \frac12 \| \betagam - \beta \|_{\hat \cH} \sq + \chi(\beta\in [0,1]^n) 
        \\ 
        & =  \frac12 \| \betast - \beta \|_{\hat \cH} \sq + \chi(\beta\in[0,1]^n)
         + \bigg(\dtt (G^b )\tp \b + (\betast - \beta) \tp \hat \cH (\betagam - \betast)  + \frac12 \| \betagam - \betast\|_{\hat \cH}\sq \bigg)
        \\
        & =  \frac12 \| \betast - \beta \|_{\hat \cH} \sq + \chi(\beta\in [0,1]^n)
        + o_p(1)
        \\
        & \toprob 
        \frac12 \| \betast - \beta\|_{\cH} \sq + \chi(\beta \in [0,1]^n ) 
    \end{align*}
    By convexity, it implies uniform convergence on compact sets in probability.
    Since $\betast$ uniquely minimize $\b \mapsto \| \betast - \beta\|_{\cH} \sq + \chi(\beta \in [0,1]^n)$, we conclude $\betab \toprob \betast$ \citep[Theorem 2.7]{newey1994large}.

Step 2. Identify the limit distribution of $\betab$.
Note that when $I_{=>} = \emptyset$, $\E[\nabla F(\cd,\betast)] = 0$.
    Define $X_t(h) =(G^b)\tp (h + \frac{\betast - \betagam}{\dtt}) + \frac12 \|h + \frac{\betast - \betagam}{\dtt}\|_{\hat \cH} \sq  $.
    We first show $X_t(h) \rightsquigarrow G\tp h + \frac12 h\tp \cH h$ in $\ell^\infty (K)$ for any compact $K\subset \Rn$, where $G\sim \cN(0 ,\E[\nabla F(\cd, \betast)\nabla F(\cd, \betast) \tp])$. The proof is identical to the proof of \cref{claim:bsprocess} and is omitted here.
    Next, by a change of variable $h = \eps_t \inv (\beta - \betast)$, the inclusion $\beta \in [0,1]$ becomes $h \in({[0,1]^n - \betast})/{\eps_t} $, and
    \begin{align*}
        \eps_t \inv (\betab - \betast) = \argmin_{h \in ({[0,1]^n - \betast})/{\eps_t}} X_t(h)
        \tod \argmin_{h \in \Rn: h_i \leq  0, i \in I_{==}}  G\tp h + \frac12 h\tp \cH h = \cJ_\FPPE
    \end{align*}
    where the last equality uses $I_{=>} = \emptyset$ and thus $ ({[0,1]^n - \betast})/{\eps_t}\toepi \{h \in \Rn: h_i \leq 0, i \in I_{==} \}$.
    We conclude the proof of \cref{thm:prboot_fppepoor}.
\end{proof}

\subsection{Proof of \cref{thm:failbs}}
\label{sec:proof:thm:failbs}
The limit FPPE is $\betast_1 = 1$ and $\deltast_1 = 0$. 
The observed FPPE is $\betagam_1 = \min\{1, 1/{\vbart}\}$ where $\vbart = \frac1t \sumtau v_1(\thetau)$. 
The bootstrapped FPPE  \cref{eq:bsexample_eg}
 is $\beta^b_1 = \min \{ 1 , 1/ \bar{v} ^{t,\boot}\}$ where $\bar{v} ^{t,\boot} = \frac1t \sumtau v_1(\theta^{\tau,\boot})$.

 First, we derive the limit distribution of the observed FPPE. We have
 \begin{align*}
     \sqrt{ t} (\betagam_1 - 1) = \frac{1}{\vbart} \min \{ \sqrt t (1 - \vbart), 0 \} \tod \min \{Z, 0\} = \cJ_\FPPE.
 \end{align*}
 where $Z \sim \cN (0, \var(v_1))$. In the above we used $\vbart \toprob \E[v_1]=1$, Slutsky's Theorem, $\sqrt{t}(1-\vbart) \tod Z$, and the continuous mapping theorem.

 Now we analyze the limit distribution of  the bootstrapped FPPE.
 Define the set $A_c = \{  \limsup_t \sqrt{t} (1 - \vbart ) > c\}$ for any $c > 0$. By the law of the iterated logarithm, 
 \begin{align*}
    \P\bigg( 
        \limsup_{t\to\infty}\frac{\sqrt{t} (1-\vbart)}{\sqrt{2\log\log t}} = 1
    \bigg) = 1,
 \end{align*}
and thus $\P(A_c) = 1$ for all $0<c<\infty$.
Note that it holds $ \vbartboot - 1\toprob 0$ and $\sqrt{t} (\vbartboot - \vbart) \tod \cN(0,\var(v_1))$ conditional on observed items (by triangular-array versions of the law of large numbers and the central limit theorem, see Theorem 2.2.6 and Theorem 3.4.10 from \citet{durrett2019probability}). 
 On the event $A_c\,$, we can choose a subsequence $\{t_k\}_k\,$, such that $ \sqrt{t_k} (1 - \bar v ^{t_k} )  \geq c$ for all $k$. 
 Now let $t$ be an element of this subsequence. Then we have
 \begin{align*}
    \sqrt{t} (\beta^b_1 - \betagam_1) & = \sqrt{t} (\min\{1, 1/\vbartboot\} - \min\{1,1/\vbart\} )
    \notag
    \\
    &\geq 
    \sqrt{t} (\min\{1, 1/\vbartboot\} - 1 )
    \notag
    \\
    &= \frac{1}{\bar v ^{t, \boot}} \min \{ 0, \sqrt{t} ( \vbart - \vbartboot + 1 - \vbart) \}
    \notag
    \\ 
    &\geq 
    \frac{1}{\bar v ^{t, \boot}} \min \{ 0, \sqrt{t} ( \vbart - \vbartboot ) + c \}
    \\ 
    &\tod 
    \min \{ 0, Z + c \} \geq \min \{0, Z\},
    \notag
 \end{align*} 
 where we used Slutsky's theorem for the convergence in probability.
 The last inequality is strict with strictly positive probability. 
 We conclude that
 the standard multinomial bootstrap $\sqrt{t}(\beta^b - \betagam)$ fails to converge to the desired distribution~$\cJ_\FPPE$.

\subsection{Proof of \cref{thm:adaptive_boot}}
\label{sec:proof:thm:adaptive_boot}
\begin{proof}[Proof of \cref{thm:adaptive_boot}]
    Define the estimated critical cone 
    \begin{align*}
        \hat C \defeq \{ h : h_i = \frac{1-\betasti}{\dwt} \text{ for } i \in \hat I _{=>}  \}
    \end{align*}
    Recall under \nameref{as:constraint_qualification} the critical cone is $C = \{h \in \Rn :  h_i = 0 , i \in I_{=>} \}$.

    \textit{Step 1}. We show that the critical cone is correctly estimated in the sense that 
    \begin{align}
        \label{eq:coneepiprob}
        \chi( \cdot \in \hat C) \toepi \chi ( \cdot \in C)
        \text{ \quad in probability}
    \end{align}    
    The claim is equivalent to $d_{AW} (\hat C, C) \toprob 0$, where $d_{AW}$ is defined in \cref{eq:defdaw}.
    For any $\epsilon > 0$, the event $\{ d_{AW} (\hat C, C) > \epsilon\}$ is equivalent to $ \{ \hat I_{=>} \neq I_{=>}  \}$.
    First we bound $\P( \hat I_{=>} \neq I_{=>})$.    
    \begin{align*}
        \P( \hat I_{=>} \neq I_{=>}) 
        & = \P(\exists i \in I_{=>}, 1-\betagami > \dwt, \text{ or } \exists i \in \Ic,  1-\betagami < \dwt)
        \\
        & \leq \sum_{i\in I_{=>}} \P( 1-\betagami > \dwt)
        + \sum _{ i \in \Ic} \P( 1-\betagami < \dwt)
    \end{align*}
    For $i\in I_{=>}$, by $\betagam_i - 1 = o_p(\frac{1}{\sqrt t})$, we have $\P( 1-\betagami > \dwt) = \P( o_p(1) > \dwt \sqrt t) \to 0$ since $\dwt \sqrt t \to c > 0$. 
    For $i \in \Ic$, since $\betasti - 1 < 0$, $\P( 1 - \betagami < \dwt) = \P(\betasti - \betagami < \betasti - 1 + \dwt) = \P(o_p(1) <\betasti - 1 + \dwt ) \to 0$ by $\dwt \downarrow 0$.
    We conclude $\P( \hat I_{=>} \neq I_{=>}) \to 0$
    and so
    $ \chi( \cdot \in \hat C) \toepi \chi ( \cdot \in C)
    $ in probability.

    \textit{Step 2.}
    Next, we show $\betab \toprob \betast$.
    Since $\dtt \to 0$ and $G^b = O_p(1)$, we have $\dtt G^b = o_p(1)$. Then for each $\beta \in \Rn$
    \begin{align*}
        & \dtt G^b + \frac12 \| \betagam - \beta \|_{\hat \cH} \sq + \chi(\beta\in \hat B) 
        \\ 
        & =  \frac12 \| \betast - \beta \|_{\hat \cH} \sq + \chi(\beta\in \hat B)
         + \bigg(\dtt G^b + (\betast - \beta) \tp \hat \cH (\betagam - \betast)  + \frac12 \| \betagam - \betast\|_{\hat \cH}\sq \bigg)
        \\
        & =  \frac12 \| \betast - \beta \|_{\hat \cH} \sq + \chi(\beta\in \hat B)
        + o_p(1)
        \\
        & \toprob 
        \frac12 \| \betast - \beta\|_{\cH} \sq + \chi(\beta \in B ' ) 
    \end{align*}
    where $B ' = \{\beta \in [0,1]^n: \betai = 1, i \in I_{=>}  \}$.
    By convexity, it implies uniform convergence on compact sets in probability.
    Since $\betast$ uniquely minimize $\| \betast - \beta\|_{\cH} \sq + \chi(\beta \in B)$, we conclude $\betab \toprob \betast$ \citep[Theorem 2.7]{newey1994large}.

    \textit{Step 3}. Identify the limit distribution of $\betab$.

    Note $\dtt\inv (\betab - \betagam) = \dtt\inv (\betab - \betast) + \dtt\inv (\betast - \betagam) = \dtt \inv (\betab - \betast) + o_p(1)$, it suffices to show $\dtt \inv (\betab - \betast) \topw \cJ_\FPPE$.

    First with a change of variable $h = \dtt\inv (\beta - \betast)$ and so $\dtt\inv (\beta-\betagam ) = h +\dtt\inv (\betast - \betagam)$, dividing the objective function by $\dtt\sq$, the bootstrap estimator in \cref{eq:adboot_fppe}, with probability approaching one, can be written as
    \begin{align*}
        \frac{\betab - \betast}{\dtt} = \argmin_{h \in \Rn} \bigg\{ 
            \underbrace{(G^b)\tp (h + \frac{\betast - \betagam}{\dtt}) + \frac12 \|h + \frac{\betast - \betagam}{\dtt}\|_{\hat \cH} \sq }_{:= X_t(h)} 
        + 
        \chi(h \in \hat C) \bigg\}
    \end{align*} 

    \begin{claim}  
        \label{claim:bsprocess}
        $X_t(h) \rightsquigarrow (\topw)G\tp h + \frac 12 h \tp \cH h$ in $\ell^\infty (K)$ for any compact $K \subset \Rn$.
    \end{claim}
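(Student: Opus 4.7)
The plan is to expand $X_t(h)$ into a sum of five terms, show that two of them converge (conditionally) to the claimed limits as processes in $h$, and that the remaining three are $o_p(1)$ uniformly on the compact set $K$. Writing $\Delta_t \defeq (\betast - \betagam)/\dtt$, we have
\begin{align*}
    X_t(h) = (G^b)\tp h + \tfrac12 h\tp \hat\cH h
    + (G^b)\tp \Delta_t + h\tp \hat\cH \Delta_t + \tfrac12 \Delta_t\tp \hat\cH \Delta_t \;.
\end{align*}
Since $\betagam - \betast = O_p(1/\sqrt t)$ (\citealt{liao2023stat}) and $\dtt\sqrt t\to\infty$, we have $\Delta_t = o_p(1)$; combined with $\hat\cH \toprob \cH$ and the bootstrap tightness $G^b = O_p(1)$ (conditional on data, in probability), the three residual terms are $o_p(1)$ uniformly in $h\in K$.

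For the two leading terms, the first step is to verify the conditional bootstrap CLT for $G^b$. Using the decomposition
\begin{align*}
    G^b = \sqrt t (\Pbt - \Pt) D_F(\cd,\betast) + \sqrt t (\Pbt - \Pt)\bigl( D_F(\cd,\betagam) - D_F(\cd,\betast)\bigr) \;,
\end{align*}
the first summand converges conditionally in distribution to $G\sim \cN(0, \cov[\nabla F(\cd,\betast)])$ by the standard multiplier CLT (see \citealt[Lemma 4.1]{wellner1996bootstrapping}). The second summand is $o_p(1)$ by the bootstrap stochastic equicontinuity already established at $\betast$ (the bootstrap counterpart of \cref{lm:stocdiff_boot}/\cref{eq:dfstocheq}) applied at the random point $\betagam$, which lies in an $O_p(1/\sqrt t)$-neighborhood of $\betast$. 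Hence $G^b \topw G$.

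Next, I would combine this with $\hat\cH\toprob \cH$ (assumed) to upgrade pointwise conditional convergence in $h$ to weak convergence of the process $h\mapsto (G^b)\tp h + \tfrac12 h\tp \hat \cH h$ in $\ell^\infty(K)$. This is routine: the map $(g,A) \mapsto \{h\mapsto g\tp h + \tfrac12 h\tp A h\}$ from $\Rn\times \R^{n\times n}$ into $\ell^\infty(K)$ is continuous for any compact $K$, so the conditional continuous mapping theorem for the bootstrap (see, e.g., \citealt[Thm.~10.8]{kosorok2008introduction}) applied to the joint convergence $(G^b, \hat\cH) \topw (G,\cH)$ yields the claim. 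Bundling in the three $o_p(1)$ residuals via the bootstrap analogue of Slutsky's lemma completes the argument.

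The only subtle step is the second summand in the decomposition of $G^b$. This is exactly the place where one needs bootstrap stochastic equicontinuity of the empirical process indexed by $\{D_F(\cd,\beta) - D_F(\cd,\betast)\}$ over a shrinking neighborhood of $\betast$; the non-smoothness of $F$ at ties is handled by the deterministic subgradient selection $D_F$ together with the manageability/Donsker-class results from \citet{liao2023stat}, which transfer to the bootstrap process via \citet{praestgaard1993exchangeably} or \citet{wellner1996bootstrapping}. Beyond this, the rest of the argument is a direct combination of Slutsky and the continuous mapping theorem.
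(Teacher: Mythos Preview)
Your proposal is correct and follows essentially the same approach as the paper: expand $X_t(h)$ around $h$, kill the $\Delta_t$-dependent residuals using $\betagam-\betast=O_p(1/\sqrt t)$ and $\dtt\sqrt t\to\infty$, establish $G^b\topw G$ via the decomposition and bootstrap equicontinuity (the paper cites \citet[Lemma~4.1]{wellner1996bootstrapping} directly for the second summand, which is the same content as the bootstrap part of \cref{lm:stocdiff_boot}), and finish with $\hat\cH\toprob\cH$ plus continuous mapping/Slutsky. The only cosmetic difference is that the paper absorbs the $\hat\cH\to\cH$ replacement into the residual bound \cref{eq:smaller_terms} (writing $\tfrac12\|h\|_{\cH}^2$ rather than $\tfrac12 h\tp\hat\cH h$ there), whereas you keep $\hat\cH$ in the leading term and pass to the limit via joint convergence $(G^b,\hat\cH)\topw(G,\cH)$; both routes are equivalent.
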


    Combining \cref{claim:bsprocess} and \cref{eq:coneepiprob},
    based on the argument in the proof of \citet[Theorem 4.2]{li2020numerical}, we have $\dtt\inv (\betab - \betast) \topw \argmin_{h\in C} G\tp h + \frac12 h\tp \cH h $, which is exactly $\cJ_\FPPE$.
\end{proof}

\begin{proof}[Proof of \cref{claim:bsprocess}]
    First, we show for any compact set $K \in \Rn$, 
    \begin{align}
        \label{eq:smaller_terms}
        \sup_{h \in K} |X_t(h) - ( (G^b) \tp h + \frac12 \|h\|_{ \cH} \sq  
       )| = o_p(1)
    \end{align}
    Note the LHS can be upper bounded by 
    \begin{align*}
        (G^b)\tp (\betagam - \betast)\dtt\inv 
        + \frac12 \|\betagam-\betast\|_{\cH}\sq \dtt^{-2}
        +  \sup_K h\tp \hat \cH (\betagam - \betast)\dtt\inv
        + \sup_K h\tp (\hat \cH - \cH) h
    \end{align*}
    We          just need to show $(G^b)\tp (\betagam-\betast) = o_p(\dtt)$,
    $\|\betagam - \betast\|_{\hat \cH}                         = o_p(\dtt\sq)$ and
    $\sup_K     h\tp \hat \cH (\betagam - \betast)             = o_p(\dtt)$ and
    $\sup_K     |h\tp \cH h - h \tp \hat \cH h|                = o_p(1)$ (\cref{thm:hessian_est}).
    This        holds by $G^b                                  = O_p(1)$,                $\betagam - \betast = O_p(1/\sqrt t)$,
    $\|         \cH - \hat \cH\|_2                             = o_p(1)$ and $1/\sqrt t = o(\dtt)$.

    Next we show for any compact $K \subset \Rn$, 
    \begin{align}
        \label{eq:compactconv}
       (G^b) \tp h + \frac12 \|h\|_{\hat \cH} \sq  
       \rightsquigarrow (\topw)
       G\tp h + \frac12 \|h\|_\cH\sq
       \text{\quad in $\ell^\infty(K)$}
    \end{align}
    where $G \sim \cN(0, \cov(\nabla F(\cd,\betast)))$.
    For unconditional convergence it suffices to show $G^b \tod G$ and $\hat \cH \toprob \cH$. In \cref{thm:hessian_est} is has been shown that $\hat \cH - \cH = o_p(1)$.
    Note $G^b = \sqrt t (\Pbt - \Pt) D_F(\cd,\betagam) = \sqrt t (\Pbt - \Pt)( D_F(\cd,\betagam) - D_F (\cd,\betast) ) + \sqrt t (\Pbt - \Pt) D_F(\cd,\betast) = \sqrt t (\Pbt - \Pt) D_F(\cd,\betast)  + o_p(1)$ by \citet[Lemma 4.1]{wellner1996bootstrapping}. We conclude $G^b \tod G$ and $X_t \rightsquigarrow h\tp G + \frac12 h\tp \cH h$.
    To show conditional convergence, we use a conditional central limit theorem and obtain $G^b \tod G$ conditional on data.
\end{proof}

\subsection{Proof of \cref{thm:generalFPPE_coverage}}
\label{sec:proof:thm:generalFPPE_coverage}

\begin{proof}[Proof of \cref{thm:generalFPPE_coverage}]
Let $T^{b,\infty}$ be the conditional limit distribution of $T^b$, and $T^ \infty$ be the limit distribution of $T^\gam (\betast, \deltast)$. 
    The proof relies on the following result.
    For two real-valued random variables $X$ and $Y$, we say $X$ is stochastically dominated by $Y$, denoted $X \leq_{st} Y$ if $\P(X>x) \leq \P(Y > x)$ for all $x\in \R$.

    \begin{theorem}
        \label{thm:generalFPPE}
        $T^b  \topw - \inf_{h \in \Rn} 
       ( G \tp h + \frac12 h\tp \cH h) =: T^{b,\infty}$, and 
        $ T^\gam(\betast, \deltast) 
        \tod -\inf_{h \in \B_\kappa } 
        ( G \tp h + \frac12 h\tp \cH h) =: T$.
        For all $\kappa \in (0, \infty)$, $T^\infty \leq_{st} T^{b,\infty}$.
        When $\kappa = \infty$, $T^\infty = T^{b,\infty}$.
    \end{theorem}
    Asymptotic coverage follows from \citet{beran1984}. This completes the proof.
\end{proof}

\begin{proof}[Proof of \cref{thm:generalFPPE}]
    We study the asymptotic distributions of $T^\gam(\betast, \deltast)$ and $T^b$.
    Recall $\nabla H (\betast) = - \deltast$.

    Step 1. We will show 
    \begin{align*}
         t (L_t(\betast +\tfrac{h}{\sqrt t}, \deltast) - L_t (\betast, \deltast)) 
        \rightsquigarrow h \tp G + \frac12 h \tp \cH h
    \end{align*}
    in $\ell^\infty ( K)$ for any compact $K \subset \Rn$.
    \begin{align*}
        & t (L_t(\betast + \tfrac{h}{\sqrt t}, \deltast) - L_t (\betast, \deltast)) 
        \\
        & = t (H_t(\betast + \tfrac{h}{\sqrt t}) - H_t(\betast) + (\deltast) \tp (\tfrac{h}{\sqrt t}) )
        \\
        & =
          \sqrt t (P_t - P) (\nabla F(\cd, \betast) )  \tp h+ \frac12 h\tp \cH h + o_p (1)
        \\
        & \rightsquigarrow h \tp G + \frac 12 h \tp \cH h \text{ in $\ell^\infty(K)$}
    \end{align*}
    where $G\sim \cN(0, \cov(\nabla F(\cd, \betast)))$, $o_p(1)$ term is uniform over $h \in K$ by \cref{claim:htquadraticexpansion}.
    Applying a continuous mapping theorem
    \begin{align*}
        T^\gam(\betast, \deltast) 
        & = -\inf_{h \in \B_{\kappa} } t (L_t(\betast + \tfrac{h}{\sqrt t}) - L_t (\betast))
        \\
        & \tod -\inf_{h \in \B_\kappa } 
        ( G \tp h + \frac12 h\tp \cH h) =: T
    \end{align*}
   
    Step 2. 
    Recall 
    \begin{align*}
        X^b (\beta) & =   (\dtt (G^b)\tp (\beta - \betagam) + \tfrac12 \|\betagam- \beta\|_{\hat \cH} \sq)  / (\dtt)\sq \;,
        \\ 
        T^b 
        & 
        =  - \inf_{\beta \in \Rnp}(\dtt (G^b)\tp (\beta - \betagam) + \tfrac12 \|\betagam- \beta\|_{\hat \cH} \sq)  / (\dtt)\sq\;.
    \end{align*}
    In \cref{claim:bsprocess}, we have shown 
    \begin{align*}
        X^b(\betast +  \dtt h) \topw G \tp h + \frac12 h\tp \cH h
    \end{align*}
    in $\ell^\infty ( K)$ for any compact $K \in \Rn$. 
    Next we study the asymptotic distribution of $T^b$.
    \begin{align*}
       T^b 
       & = -\inf_{\beta \in \Rnp} X^b(\b) 
       \\
       & = -\inf_{h \in (\Rnp - \betast)/{\dtt}} X^b(\betast +   \dtt h)
       \\
       & \topw - \inf_{h \in \Rn} 
       ( G \tp h + \frac12 h\tp \cH h) =: T^{b,\infty}
    \end{align*}
    where the last line follows due to $\betast$ lying in the interior of $\Rnp$, and a bootstrap version of extended continuous mapping theorem; see Proposition 10.7 in \citet{kosorok2008introduction}.
    We can see for each draw of $G$, we have the dominance relationship $T^{b,\infty} \geq T$.
    We conclude the $(1-\alpha)$-quantile of $T^{b,\infty}$ is greater than or equal to that of $T$.

    The claim that when $\kappa = \infty$, $T^{b} \topw T^\infty$ is obvious.
    \begin{remark}
        One could also use the statistic 
        \begin{align*}
            T^\gamma (\b) = \inf_{ 0 \leq \delta \leq b, \delta \tp (1_n - \b) = 0} \bigg(L_t (\b,\delta) - \inf _ {h \in \B_\kappa} L_t (\b + h/\sqrt t, \delta)\bigg)
        \end{align*}
    and the region $\{ \b \in (0, 1]^n: T^\gamma (\b) \leq \iota \}$ to do inference on just $\b$.
    Noting $ T^\gamma(\betast) \leq L_t (\betast,\deltast) - \inf _ {h \in \B_\kappa} L_t (\betast + h/\sqrt t, \deltast)$, we can estimate an upper bound of the quantile of its limit distribution by bootstrap.
\end{remark}
\end{proof}

\end{document}